\begin{document}
\begin{frontmatter}

  \title{Convex Team Logics}
  \runtitle{Convex Team Logics}

  \author{\fnms{Aleksi} 
    \snm{Anttila}
    \ead[label=e1]{a.i.anttila@uva.nl}
  }
  \address{Institute for Logic, Language and Computation\\
    University of Amsterdam\\
    Science Park 107\\
    1098 XG Amsterdam\\
    NETHERLANDS\\
    \printead{e1}\\
  }%
  \and%
  \author{\fnms{Søren Brinck}
    \snm{Knudstorp}
    \ead[label=e2]{s.b.knudstorp@uva.nl}
}
    \corref{}
  \address{Institute for Logic, Language and Computation \& Philosophy\\
    University of Amsterdam\\
    Science Park 107\\
    1098 XG Amsterdam\\
    NETHERLANDS\\
    \printead{e2}\\
  }%

  \runauthor{A.~Anttila and S.~B.~Knudstorp}

\begin{abstract}
We prove expressive completeness results for \emph{convex} propositional and modal team logics, where a logic is convex if, for each formula, if it is true in two teams $t$ and $u$ and $t\subseteq s\subseteq u$, then it is also true in $s$. We introduce multiple propositional/modal logics which are expressively complete for the class of all convex propositional/modal team properties. We also answer an open question concerning the expressive power of classical propositional logic extended with the \emph{nonemptiness atom} $\NE$---we show that it is expressively complete for the class of all convex and \emph{union-closed} properties. A modal analogue of this result additionally yields an expressive completeness theorem for Aloni's Bilateral State-based Modal Logic. 

In a specific sense, one of the novel propositional convex logics extends \emph{propositional dependence logic} and another, \emph{propositional inquisitive logic}. We generalize the notion of \emph{uniform definability}, as considered in the team semantics literature, to formalize the notion of extension pertaining to the convex logics.
\end{abstract}

\begin{keyword}[class=AMS]
  \kwd[Primary ]{03B60}\kwd{03B65}
\end{keyword}

\begin{keyword}
\kwd{team semantics} \kwd{convexity} \kwd{uniform definability} \kwd{bilateral state-based modal logic} \kwd{dependence logic} \kwd{inquisitive logic}
\end{keyword}

\end{frontmatter}


\section{Introduction} \label{c:section:introduction}













In \emph{team semantics}---originally introduced by Hodges \cite{hodges1997,hodges1997b} to provide a compositional semantics for Hintikka and Sandu's \emph{independence-friendly logic} \cite{hintikka1989, hintikka1996} and later refined by Väänänen in his work on \emph{dependence logic} \cite{vaananen2007}; also independently developed as a semantics for \emph{inquisitive logic} chiefly by Ciardelli, Groenendijk, and Roelofsen \cite{CiardelliRoelofsen2011,inqsembook,ciardellibook}---formulas are interpreted with respect to sets of evaluation points called \emph{teams}, as opposed to single evaluation points as in standard Tarskian semantics. In propositional team semantics \cite{yangvaananen2016}, teams are sets of propositional valuations; in modal team semantics \cite{vaananen2008}, teams are sets of possible worlds; etc. We refer to logics which are primarily intended to be interpreted using team semantics as \emph{team logics}.

Team-semantic \emph{closure properties} such as \emph{downward closure} (a formula $\phi$ is downward closed just in case its truth in a team implies truth in all subteams---$[t\vDash \phi$ and $s\subseteq t]\implies s\vDash \phi$) and \emph{union closure} ($\phi$ is union closed iff given a nonempty collections of teams $T$, if $t\vDash \phi$ for all $t\in T$, then  $\bigcup T\vDash \phi$) play an important role in the study of team logics. They allow for concise and tractable characterizations and classification of these logics, and provide an effective tool for proving definability results, completeness of axiomatizations (see, e.g., \cite{yangvaananen2016,yang2017}), and other properties such as uniform interpolation \cite{dagostino}. 

In addition to their mathematical usefulness, closure properties are also conceptually suggestive---for instance, on the common interpretation of team logics in which teams represent \emph{information states}, a formula $\phi$ being downward closed can be thought of as representing the fact that the kind of information or content expressed by $\phi$ is \emph{persistent}: if $\phi$ is established in an information state $t$ ($t\vDash \phi$), then moving from $t$ to a more informed state $s$ by ruling out some possibilities ($s\subseteq t$) does not invalidate the information that $\phi$ ($s\vDash \phi$) (see, e.g., \cite{veltman,inqsembook,yalcin,bledin,aloni2022}). (Compare the sentence `It is raining' with the sentence `It might be raining', which employs the \emph{epistemic modality} `might'. The former is persistent: if I know that it is raining---if my information state truthfully establishes that it is raining---then there is no further information that could invalidate `It is raining'. `It might be raining', on the other hand, is not persistent: if, for all that I know, it might be raining, and I learn that it is, in fact, not raining, the information or content expressed by `It might be raining' is invalidated by the further information.)

In this paper, we focus on the \emph{convexity} closure property: a formula $\phi$ is convex if its truth in teams $s$ and $t$ implies its truth in all teams $u$ with $s\subseteq u \subseteq t$ (see Figure \ref{c:figure:convexity}). Intuitively, $\phi$ is convex if there are no ``gaps" in the property $\left \Vert \phi\right \Vert=\{t\mid t\vDash \phi\}$ expressed by $\phi$---the meaning of $\phi$ is continuous (convexity is sometimes referred to as \emph{continuity}, as in \cite{vanbenthem1984}). Convexity can also be seen as a natural minimal generalization of downward closure: if a formula is downward closed, it is also convex, though the converse need not hold. Many natural and interesting properties which are are not expressible in a downward-closed setting become expressible in a convex setting; for instance, the epistemic modality-example from above, while not expressible by any downward-closed formula, corresponds to a simple convex formula.


  \begin{figure}[t]  
\centering
\begin{subfigure}[t]{0.3\textwidth}
\centering
\begin{tikzpicture}[scale=0.58]
\tikzset{every node/.style={fill=none,draw=black,circle,inner sep=2pt}}
\tikzset{every edge/.style={draw,-latex}}
\tikzset{every loop/.style={min distance=10mm,in=0,out=60,looseness=10}}

\draw[opaque,rounded corners,,fill=blue!10]   (-2.5,-1) -- (-2.5,0.5)  -- (2.5,0.5) -- (2.5,-3) -- (0,-0.5) -- (-2.5,-3) -- (-2.5,-1);
\node[fill,draw=black,circle,inner sep=1pt] (N) at (0,-4) {};
\node[fill,draw=black,circle,inner sep=1pt] (xyz) at (0,2) {};

\node[fill,draw=black,circle,inner sep=1pt] (x) at (-2,-2) {};
\node[fill,draw=black,circle,inner sep=1pt] (y) at (0,-2) {};
\node[fill,draw=black,circle,inner sep=1pt] (z) at (2,-2) {};
\node[draw=none,inner sep=0pt] (a) at (0.4,-4.4) {};

\node[fill,draw=black,circle,inner sep=1pt] (xy) at (-2,0) {};
\node[fill,draw=black,circle,inner sep=1pt] (xz) at (0,0) {};
\node[fill,draw=black,circle,inner sep=1pt] (yz) at (2,0) {};

\draw (N) edge (x);
\draw (N) edge (y);
\draw (N) edge (z);

\draw (x) edge (xy);
\draw (x) edge (xz);
\draw (y) edge (xy);
\draw (y) edge (yz);
\draw (z) edge (xz);
\draw (z) edge (yz);

\draw (xy) edge (xyz);
\draw (xz) edge (xyz);
\draw (yz) edge (xyz);




\end{tikzpicture}
\caption{Convex}
\end{subfigure}
\begin{subfigure}[t]{0.3\textwidth}
\centering
\begin{tikzpicture}[scale=0.58]
\tikzset{every node/.style={fill=none,draw=black,circle,inner sep=2pt}}
\tikzset{every edge/.style={draw,-latex}}
\tikzset{every loop/.style={min distance=10mm,in=0,out=60,looseness=10}}

\draw[opaque,rounded corners,fill=blue!10]   (0,-2.5) -- (-2.5,0) --  (0,2.5) -- (2.5,0) -- (0,-2.5);
\node[fill,draw=black,circle,inner sep=1pt] (N) at (0,-4) {};
\node[fill,draw=black,circle,inner sep=1pt] (xyz) at (0,2) {};

\node[fill,draw=black,circle,inner sep=1pt] (x) at (-2,-2) {};
\node[fill,draw=black,circle,inner sep=1pt] (y) at (0,-2) {};
\node[fill,draw=black,circle,inner sep=1pt] (z) at (2,-2) {};
\node[draw=none,inner sep=0pt] (a) at (0.4,-4.4) {};

\node[fill,draw=black,circle,inner sep=1pt] (xy) at (-2,0) {};
\node[fill,draw=black,circle,inner sep=1pt] (xz) at (0,0) {};
\node[fill,draw=black,circle,inner sep=1pt] (yz) at (2,0) {};

\draw (N) edge (x);
\draw (N) edge (y);
\draw (N) edge (z);

\draw (x) edge (xy);
\draw (x) edge (xz);
\draw (y) edge (xy);
\draw (y) edge (yz);
\draw (z) edge (xz);
\draw (z) edge (yz);

\draw (xy) edge (xyz);
\draw (xz) edge (xyz);
\draw (yz) edge (xyz);




\end{tikzpicture}
\caption{Convex (and upward-directed)}
\end{subfigure}
\begin{subfigure}[t]{0.3\textwidth}
\centering
\begin{tikzpicture}[scale=0.58]
\tikzset{every node/.style={fill=none,draw=black,circle,inner sep=2pt}}
\tikzset{every edge/.style={draw,-latex}}
\tikzset{every loop/.style={min distance=10mm,in=0,out=60,looseness=10}}

\draw[opaque,rounded corners,fill=red!10]  (0,-0.5) -- (-2.5,-0.5) -- (-2.5,0) --  (0,2.5) -- (2.5,0) -- (2.5,-0.5) -- (0,-0.5);
\draw[opaque,rounded corners,fill=red!10]  (-0.5,-4.5) rectangle (0.5,-3.5);

\node[fill,draw=black,circle,inner sep=1pt] (N) at (0,-4) {};
\node[fill,draw=black,circle,inner sep=1pt] (xyz) at (0,2) {};

\node[fill,draw=black,circle,inner sep=1pt] (x) at (-2,-2) {};
\node[fill,draw=black,circle,inner sep=1pt] (y) at (0,-2) {};
\node[fill,draw=black,circle,inner sep=1pt] (z) at (2,-2) {};
\node[draw=none,inner sep=0pt] (a) at (0.4,-4.4) {};

\node[fill,draw=black,circle,inner sep=1pt] (xy) at (-2,0) {};
\node[fill,draw=black,circle,inner sep=1pt] (xz) at (0,0) {};
\node[fill,draw=black,circle,inner sep=1pt] (yz) at (2,0) {};

\draw (N) edge (x);
\draw (N) edge (y);
\draw (N) edge (z);

\draw (x) edge (xy);
\draw (x) edge (xz);
\draw (y) edge (xy);
\draw (y) edge (yz);
\draw (z) edge (xz);
\draw (z) edge (yz);

\draw (xy) edge (xyz);
\draw (xz) edge (xyz);
\draw (yz) edge (xyz);



\end{tikzpicture}
\caption{Not convex (but upward-directed)}
\end{subfigure}
\caption{Examples of convex/non-convex subsets of a lattice. Note that in a powerset lattice, upward-directedness corresponds to union closure.}
\label{c:figure:convexity}
\end{figure}
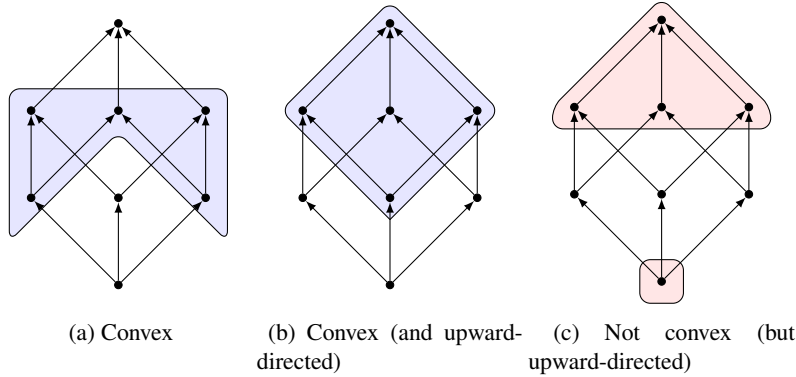

The notion of convexity defined above is a particularization to the setting of propositional team semantics of a more general notion of gaplessness in meanings. There have been many proposals to the effect that such an absence of gaps is a common or even essential feature of the meanings of simple lexicalized expressions---that convexity constitutes a linguistic or cognitive universal of some kind. Some prominent early examples of such claims include Barwise and Cooper's monotonicity constraint for the meanings of simple noun phrases \cite{barwise1981}; van Benthem's expectation that ``reasonable" quantifiers exhibit convexity \cite{vanbenthem1984}; and Gärdenfors' proposal that concepts conform to convexity \cite{gardenfors2000}.

In spite of its naturalness and prominence in the literature, convexity has not received much attention in the study of team logics (with the exception of the recent \cite{hella2024,ciardelli2024}). In this paper, we aim to further the understanding of convexity in team semantics by proving \emph{expressive completeness theorems} for propositional and modal team logics with respect to convex classes of properties.

In the first part of the paper, we introduce propositional and modal logics which we prove to be expressively complete with respect to the class of all convex propositional/modal properties---that is, we show that all formulas of these logics are convex, and that all convex properties can be expressed by formulas of these logics.

Most team logics are conservative extensions of well-known logics with standard single-evaluation-point Tarskian semantics, in the sense that these team logics include a fragment corresponding to a logic with standard semantics such that for each formula $\alpha$ of the fragment,
$$t\vDash \alpha \iff v\vDash \alpha \text{ for all }v\in t;$$
that is, $\alpha$ is true (according to the team semantics of the logic in question) in a team $t$ just in case it is true in all elements of the team (according to the standard semantics of the fragment); and the set of connectives of the fragment is \emph{functionally complete} for the logic with the standard semantics. We are concerned in particular with propositional logics which extend classical propositional logic and modal logics which extend the smallest normal modal logic $\mathrm{K}$---we call each of these fragments the \emph{classical basis} of any team logic that extends it.

The standard classical basis in the team logic literature in the lineage of Väänänen's dependence logic includes the \emph{split disjunction} $\vee$ (also known as the \emph{tensor disjunction} and the \emph{local disjunction}). We show that in a setting in which all convex properties are definable, $\vee$ does not preserve convexity---that is, one can find convex formulas $\phi$ and $\psi$ such that $\phi\vee\psi$ is not convex. To obtain logics which are complete for all convex properties, we must therefore break with this lineage and either modify the split disjunction or opt for a different classical basis. The first of our propositional convex logics, \emph{convex propositional dependence logic} $\CONDEP$, extends one of the most prominent propositional team logics, \emph{propositional dependence logic} $\DEP$ (expressively complete for all downward-closed properties with the \emph{empty team property}) \cite{yangvaananen2016} with an operator $\filleddiamond$ corresponding to the epistemic `might' discussed above, and replaces the $\vee$ of $\DEP$ with a variant $\veedot$ (also employed in Hodges' original formulation of team semantics \cite{hodges1997}) designed to force downward closure and hence convexity.

\emph{Propositional inquisitive logic} $\INQ$ \cite{CiardelliRoelofsen2011,ciardellibook} is another of the most prominent propositional team logics. Like propositional dependence logic, $\INQ$ is also expressively complete for the class of all downward-closed propositional properties with the empty team property, making it another candidate for a team logic with a natural convex extension. The classical basis of $\INQ$ (unlike that of $\DEP$) does preserve convexity, and we show that $\PLIM$, the extension of this classical basis with the $\filleddiamond$-operator, is expressively complete for all convex propositional properties. $\INQ$ extends its classical basis with the \emph{inquisitive disjunction} $\vvee$ (also known as the \emph{global disjunction}) which, like the split disjunction, does not preserve convexity in a convex setting; we also introduce a convex variant of $\INQ$, \emph{convex inquisitive logic} $\CONINQ$, which incorporates a variant $\vveedot$ of $\vvee$, which, similarly to $\veedot$, forces convexity by forcing downward closure; and show that this variant is, like $\CONDEP$ and $\PLIM$, complete for the class of all convex properties.

We then move to the setting of modal team semantics; as with $\vee$ and $\vvee$, we show that the standard diamond modality $\diamonddiamond$ (the \emph{global diamond}) employed in the modal team logics literature in the dependence logic lineage \cite{vaananen2008,hella2015,kontinen20142} fails to preserve convexity in a convex setting. To obtain modal team logics complete for the class of all (bisimulation-invariant) convex modal properties, we instead extend our convex propositional logics with the \emph{flat modalities} $\Diamond$ and $\Box$ employed in some formulations of modal inquisitive logic \cite{ciardelli2016} and, more recently, in Aloni's Bilateral State-based Modal Logic ($\BSML$) \cite{aloni2022}.

In the second part of the paper, we focus on a subclass of convex properties: convex union-closed properties. Union-closed logics such as \emph{inclusion logic} \cite{galliani2012,yang2022} are another prominent family of team logics. We show that in the more restricted union-closed setting, the split disjunction $\vee$ \emph{does} preserve convexity, so we may make use of the standard dependence logic classical basis in formulating a logic expressively complete for this class of properties. Indeed, we answer a problem that was left open in \cite{yang2017} by showing that $\PLNE$, the extension of this basis with the \emph{nonemptiness atom} $\NE$---true in a team just in case the team is nonempty---is complete for all convex and union-closed propositional properties. A modal analogue of this yields an expressive completeness result for $\BSML$; this answers another open problem \cite{aloni2023}. As with $\vee$, the global diamond $\diamonddiamond$ preserves convexity in a union-closed setting, and we show that the modal analogue of the expressive completeness result for $\PLNE$ also holds for the extension of $\PLNE$ with the global diamond $\diamonddiamond$ and the \emph{global box} $\boxbox$.

The facts we show concerning $\vee$, $\vvee$, and $\diamonddiamond$ are related to the \emph{failure of uniform substitution} exhibited by many team logics. The notion of \emph{uniform definability} \cite{kontinen2009,ciardelli2009,galliani2013b,yang2014,yang20173,ciardelli2019,hella2024,barbero2024} 
arises from this failure, and our facts concerning $\vee$, $\vvee$, and $\diamonddiamond$ also imply facts about the uniform definability of these connectives in the logics we consider. In the final part of this paper, we define a generalization of uniform definability and use this generalization to formulate multiple senses in which one team logic may be said to extend another team logic. These notions of extension, together with the facts concerning $\vee$ and $\vvee$, then allow us to articulate more precisely the sense in which our convex logics extend the downward-closed logics $\DEP$ and $\INQ$.



The paper is structured as follows. In Section \ref{c:section:convex}, we work in the convex setting. We introduce the propositional logics $\CONDEP$, $\CONINQ$, and $\PLIM$, and show that each of them is expressively complete with respect to the class of all convex propositional properties. We then show that modal extensions of these logics are expressively complete with respect to the class of all convex modal properties invariant under bounded bisimulation. We further show that the disjunctions $\vee$ and $\vvee$ and the modality $\diamonddiamond$ can break convexity in a setting in which all convex properties are definable. In Section \ref{c:section:union-closed_convex}, we move on to the convex and union-closed setting. We show that the logic $\PLNE$ is expressively complete with respect to the class of all convex and union-closed propositional properties, and that two distinct modal extensions of $\PLNE$ are expressively complete with respect to the class of all convex and union-closed modal properties invariant under bounded bisimulation. In Section \ref{c:section:uniform_definability}, we define a generalization of uniform definability and use this notion
to distinguish multiple senses in which one team logic can extend another. In Section \ref{c:section:conclusion}, we conclude and list some problems for further research, to be pursued by us or others.

A preliminary version of this paper appeared in Anttila's PhD dissertation \cite{anttila2025}.

\section{Convex Properties}
\label{c:section:convex}

In Section \ref{c:section:convex_prop}, we introduce three propositional logics which we show to be expressively complete with respect to the class of all convex properties. In Section \ref{c:section:convex_modal}, we introduce modal extensions of these logics and show modal analogues of the expressive completeness results.

\subsection{Propositional Properties}
\label{c:section:convex_prop}


We define the syntax and semantics of the different classical bases we consider, of our three convex logics, and of propositional dependence and inquisitive logic; recall basic facts about propositional team semantics and team-semantic closure properties; and show that the tensor disjunction $\vee$ and the global disjunction $\vvee$ fail to preserve convexity in a setting in which all convex properties are definable.

\subsubsection{Preliminaries: Logics and (Classical) Languages} \label{c:section:convex_prop_preliminaries} 

There are many sets of connectives which are \emph{functionally complete} for classical propositional logic: each Boolean function is definable using the connectives in such a set (one example of such a set: $\{\land, \lnot\}$). One can define team-based versions of these sets of connectives to obtain different versions of team-based classical propositional logic---different \emph{classical bases} of propositional team logics. 
Whereas, due to the functional completeness of the different alternatives, the choice of connectives often makes no difference when studying the properties of classical propositional logic on its own, the choice of connectives in the classical basis of a team logic is significant because it may affect the expressive power of its nonclassical extensions, as we will see.

The two most prominent propositional team logics---\emph{propositional dependence logic} $\DEP$\cite{yangvaananen2016} and \emph{propositional inquisitive logic} $\INQ$ \cite{CiardelliRoelofsen2011,ciardellibook}---adopt distinct classical bases. The classical basis of the former includes a team-based version of classical disjunction, called \textit{split disjunction} $\vee$; and it extends this classical basis with the non-classical \emph{dependence atoms} $\dep{p_1,\ldots,p_n}{p}$, which are used to represent functional dependence relations between the values taken by propositional variables in a team. In contrast, the classical basis of inquisitive logic does not feature the split disjunction, but instead includes a team-based version of classical implication $\to$. It extends this basis with the non-classical \emph{global/inquisitive disjunction} $\vvee$, which is used to model the meanings of questions.

We will define ``convex variants'' of these logics: \textit{convex dependence logic} $\CONDEP$ and \textit{convex inquisitive logic} $\CONINQ$. $\CONDEP$ extends classical propositional logic with both dependence atoms and the \emph{epistemic might}-operator $\filleddiamond$, used to model epistemic modalities such as the `might' in `It might be raining'. The classical basis of $\CONDEP$ is slightly different from that of $\DEP$: the former replaces the split disjunction $\vee$ with a variant of $\vee$, denoted $\veedot$. $\CONINQ$ shares its classical basis with $\INQ$, but extends this basis with both the operator $\filleddiamond$ and a variant $\vveedot$ of $\vvee$. We also introduce a third convex logic, denoted $\PLIM$; this is the $\vveedot$-free fragment of $\CONINQ$. See Table \ref{c:table:convex_logics}.



\begin{table}[h]
    \centering
\begin{tabular}{c|c||c|c}
    Downward-closed logic & Classical basis & Convex variant/logic & Classical basis \\
    \hline
    Dependence logic:  & $\PLV$ & Convex dependence logic:  & $\PLD$\\
    $\DEP$ & & $\CONDEP$ &\\
    \hline
    Inquisitive logic: & $\PLI$ & Convex inquisitive logic:  & $\PLI$\\
    $\INQ$ & & $\CONINQ$ &\\
    \hline
     &  & $\PLIM$ & $\PLI$
\end{tabular}
    \caption{The logics and their classical bases.
    }
    \label{c:table:convex_logics}
\end{table}

Formally, we define the relevant logics, languages and semantic notions as follows:

\begin{definition}[Syntax]
    \label{c:def:syntax_PLNE} 
    Fix a countably infinite set of propositional letters $\mathsf{P}$. The formulas of \emph{classical propositional logic (with $\vee$/with $\veedot$/with $\to$)} $\PLV$/$\PLD$/$\PLI$ are given by the BNF-grammars
    \begin{align*}
        &\alpha\mathrel{::=} p \mid  \bot  \mid \alpha\land \alpha \mid \alpha\lor\alpha \mid \lnot \alpha\tag{$\PLV$}\\
            &\alpha\mathrel{::=} p \mid \bot  \mid  \alpha\land \alpha \mid \alpha\dor\alpha\mid \lnot \alpha \tag{$\PLD$} \\
    &\alpha\mathrel{::=} p  \mid \bot \mid \alpha\land \alpha \mid \alpha\to\alpha \tag{$\PLI$}
    \end{align*}
    where $p\in \mathsf{P}$. 
    
    The formulas of \emph{propositional dependence logic} $\PLV(\con{\cdot})$/\emph{convex propositional dependence logic} $\CONDEP$/\emph{propositional inquisitive logic} $\INQ$/\emph{convex propositional inquisitive logic} $\CONINQ$/$\PLIM$ are then given by the BNF-grammars
    \begin{align*}
    &\phi\mathrel{::=} p  \mid \bot  \mid  \phi\land \phi \mid \phi\vee\phi \mid \neg\alpha \mid {\dep{p_1,\ldots,p_n}{p}} \tag{$\DEP$}\\
   &\phi\mathrel{::=} p  \mid \bot \mid  \phi\land \phi \mid \phi\dor\phi\mid \neg\beta \mid {\dep{p_1,\ldots,p_n}{p}}\mid \filleddiamond \phi\tag{$\CONDEP$}\\
    &\phi\mathrel{::=} p  \mid \bot \mid  \phi\land \phi \mid \phi\to\phi \mid \phi \vvee\phi \tag{$\INQ$}\\
    &\phi\mathrel{::=} p  \mid \bot \mid  \phi\land \phi \mid \phi\to\phi \mid \phi \vveedot\phi\mid \filleddiamond \phi \tag{$\CONINQ$}\\
    &\phi\mathrel{::=} p  \mid \bot \mid  \phi\land \phi \mid \phi\to\phi \mid \filleddiamond \phi \tag{$\PLIM$}
    \end{align*}
    where $p,p_1,\ldots,p_n\in \mathsf{P}$, $\alpha\in \PLV$, and $\beta\in \PLD$.
\end{definition}

 We use the first Greek letters $\alpha$ and $\beta$ to range exclusively over classical formulas (formulas of $\PLV$/$\PLD$/$\PLD$). We write $\mathsf{P}(\phi)$ for the set of propositional letters appearing in $\phi$, and $\phi(\mathsf{X})$ if $\mathsf{P}(\phi)\subseteq \mathsf{X}\subseteq\mathsf{P}$. We write $\phi (\psi_1/p_1,\ldots,\psi_n/p_n)$ for the result of replacing all occurrences of $p_i$ in $\phi$ by $\psi_i$, for $1\leq i \leq n$.

 A (propositional) \emph{team $t$ with domain $\mathsf{X}\subseteq\mathsf{P}$} (or a \emph{team over $\mathsf{X}$}) is a set of valuations with domain $\mathsf{X}$: $t\subseteq 2^\mathsf{X}$.

\begin{definition}[Semantics]
\label{c:definition:semantics_PLNE}
    Given a team $t$ over $\mathsf{X}$, the \emph{truth} of a formula $\phi(\mathsf{X})$ in $t$ (written $t\vDash \phi$) is defined by the following recursive clauses:
    \begin{align*}
        &t\vDash p && :\iff && v(p)=1 \textit{ for all $v\in t$}.\\
        &t\vDash \bot && :\iff && t=\varnothing.\\
        &t\vDash \lnot \alpha && :\iff && \{v\}\nvDash \alpha \textit{ for all $v\in t$}.\\
        &t\vDash \varphi\land\psi && :\iff  && t\vDash \varphi \textit{\hspace{0.1cm} and \hspace{0.1cm}} t\vDash \psi.\\
        &t\vDash \varphi\lor\psi && :\iff  && \textit{there exist $s,u$ such that } s\vDash \varphi,\textit{ } u\vDash \psi,\textit{ and $t= s\cup u$.}\\
        &t\vDash \varphi\dor\psi && :\iff  && \textit{there exist $s,u$ such that } s\vDash \varphi,\textit{ } u\vDash \psi,\textit{ and $t\subseteq s\cup u$.}\\
        &t\vDash \phi\to\psi && :\iff  && \textit{for all }s\subseteq t:\textit{ if } s\vDash \varphi \textit{ then }s\vDash \psi.\\
         &t\vDash {\dep{p_1,\ldots,p_n}{p}} && :\iff &&\textit{$\forall v,w\in t:[\forall 1\leq i\leq n:v(p_i)=w(p_i) ]\implies v(p)=w(p)$}.\\
        &t\vDash \filleddiamond\phi&& :\iff && \textit{there exists $s\subseteq t$ such that $s\neq \emptyset$ and $s\vDash \phi$}.\\
        &t\vDash \phi\vvee\psi && :\iff  && t\vDash \phi\textit{ or } t\vDash \psi.\\
        &t\vDash \phi\vveedot\psi && :\iff  &&\textit{there exists }s\supseteq t\textit{ such that } s\vDash \phi\text{ or } s\vDash \psi.
    \end{align*}
    We say that a set of formulas $\Gamma$ \emph{entails} $\phi$, written $\Gamma\models \varphi$, if for all teams $t$, if $t\models \gamma$ for all $\gamma\in \Gamma$, then $t\models \varphi$. We write simply $\phi_1,\ldots,\phi_n\models\phi$ for $\{\phi_1,\ldots,\phi_n\}\models\phi$ and $\models \phi$ for $\emptyset\models \phi$, where $\emptyset$ is the empty set of formulas. If both $\phi\models\psi$ and $\psi\models\phi$, we say that $\phi$ and $\psi$ are \emph{equivalent}, and write $\phi \equiv \psi$.
\end{definition}
Further, for later convenience, we define:
\begin{itemize}
    \item In $\PLI$ and its extensions: $\lnot \phi:=\phi \to \bot$; 
    \item In all languages: $\top:=\lnot \bot$;
    \item In languages with $\filleddiamond$: $\Bot:=\filleddiamond \bot$;
    \item In the classical bases: $\alpha\curlyvee \beta:=\lnot (\lnot \alpha \land \lnot \beta)$.
\end{itemize}  
Note that in $\PLI$ and its extensions, $t\vDash \lnot \phi\iff \forall s\subseteq t: [s\vDash\phi$ implies $s=\emptyset]$. Note also that $t\vDash \top$ is always the case and $t\vDash \Bot$ is never the case. We stipulate that $\bigvee \emptyset:=\bot$, $\bigveedot\emptyset:=\bot$, $\bigwedge \emptyset:=\top$, $\bigvvee \emptyset:=\bot$, $\bigvveedot\emptyset:=\bot$.
\\\\
With these definitions in place, a few remarks on the connectives and their clauses are in order.


  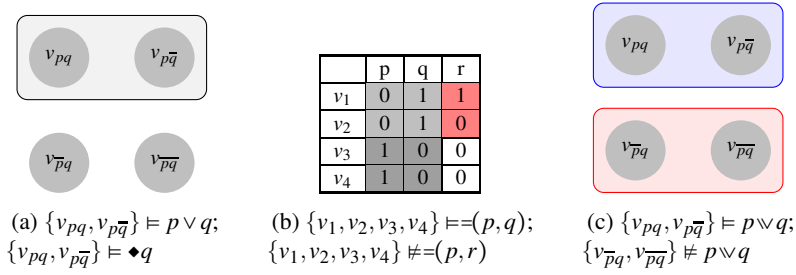
\begin{figure}[t]  
\centering
\begin{subfigure}[b]{0.24\textwidth}
\centering
    \begin{tikzpicture}[>=latex,scale=.7]
    
\draw[opaque, rounded corners,fill=gray!10] (-1.8,1.8) rectangle (1.8, 0.2);
    
    \draw (1,1) node[index gray, minimum size=0.8cm] (wp) {$v_{p\overline{q}}$};
    \draw (-1,1) node[index gray, minimum size=0.8cm] (wpq) {$v_{pq}$};
    \draw (-1,-1) node[index gray, minimum size=0.8cm] (wq) {$v_{\overline{p}q}$};
    \draw (1,-1) node[index gray, minimum size=0.8cm] (w4) {$v_{\overline{pq}}$};
  	
  	\end{tikzpicture}
    \caption{$\{v_{pq},v_{p\overline{q}}\}\vDash p\vee q $; $\{v_{pq},v_{p\overline{q}}\}\vDash \filleddiamond q$}
    \label{c:figure:vee_might}
    \end{subfigure}
\hspace{0.5cm}
\begin{subfigure}[b]{0.3\textwidth}
\centering
\begin{tabular}{|c|c|c|c|}
\hline 
& p & q & r \\ 
\hline 
$v_1$&\cellcolor{gray!50}$0$&\cellcolor{gray!50}$1$&\cellcolor{red!50}$1$\\
\hline 
$v_2$&\cellcolor{gray!50}$0$&\cellcolor{gray!50}$1$&\cellcolor{red!50}$0$\\
\hline 
$v_3$&\cellcolor{gray!75}$1$&\cellcolor{gray!75}0&$0$\\
\hline 
$v_4$&\cellcolor{gray!75}$1$&\cellcolor{gray!75}$0$&$0$\\
\hline 
\end{tabular}
\caption{$\{v_1,v_2,v_3,v_4\}\vDash \dep{p}{q}$; $\{v_1,v_2,v_3,v_4\}\nvDash \dep{p}{r}$}
\label{c:figure:dep}
\end{subfigure}
\hspace{0.5cm}
\begin{subfigure}[b]{0.24\textwidth}
\centering
    \begin{tikzpicture}[>=latex,scale=.7]
    
\draw[opaque, rounded corners, color=blue,fill=blue!10] (-1.8,1.8) rectangle (1.8, 0.2);
   \draw[opaque, rounded corners, color=red,fill=red!10] (-1.8,-0.2) rectangle (1.8, -1.8);
    
    \draw (1,1) node[index gray, minimum size=0.8cm] (wp) {$v_{p\overline{q}}$};
    \draw (-1,1) node[index gray, minimum size=0.8cm] (wpq) {$v_{pq}$};
    \draw (-1,-1) node[index gray, minimum size=0.8cm] (wq) {$v_{\overline{p}q}$};
    \draw (1,-1) node[index gray, minimum size=0.8cm] (w4) {$v_{\overline{pq}}$};
  	
  	\end{tikzpicture}
    \caption{$\{v_{pq},v_{p\overline{q}}\}\vDash p\vvee q$;
    $\{v_{\overline{p}q},v_{\overline{pq}}\}\nvDash p\vvee q$}
        \label{c:figure:inq}
    \end{subfigure}
\caption{Examples of the semantics.}
    	\end{figure}

A split disjunction $\phi \vee\psi$ is true in a team $t$ just in case the team can be split into subteams $s$ and $u$ such that $\phi$ is true in $s$ and $\psi$ is true in $u$. See Figure \ref{c:figure:vee_might}. Note that one of the two subteams can be empty; for instance, we have that $\{v_{p\overline{q}}\}\vDash p\vee \lnot p$ (where $v_{p\overline{q}}(p)=1$ and $v_{p\overline{q}}(q)=0$) because $\{v_{p\overline{q}}\}=\{v_{p\overline{q}}\}\cup \emptyset$ where $\{v_{p\overline{q}}\}\vDash p$ and $\emptyset\vDash \lnot p$. $\PLV$ is the standard classical propositional basis in the dependence logic literature (being used, for instance, in propositional dependence logic \cite{yangvaananen2016} and propositional inclusion logic \cite{yang2022}), and $\vee$ is the canonical ``classical'' (in the sense of Fact \ref{c:fact:classical_correspondence}, below) disjunction in this literature. $\DEP$ extends $\PLV$ with dependence atoms $\dep{p_1,\ldots,p_n}{p}$. The intuitive meaning of $\dep{p_1,\ldots,p_n}{p}$ is that the truth values of $p_1,\ldots,p_n$ jointly determine the truth value of $p$. See Figure \ref{c:figure:dep}. A unary dependence atom $\con{p}$ is called a \emph{constancy atom}---$\con{p}$ is true in a team just in case the truth value of $p$ is constant across the team ($t\vDash {\con{p}}$ iff $[\forall v\in t: v(p)=1$ or $\forall v\in t:v(p)=0]$). 

The classical basis $\PLD$ replaces the split disjunction $\vee$ of $\PLV$ with the variant $\dor$ (used in Hodges' original formulation of team semantics \cite{hodges1997}), where $\phi \veedot \psi$ is true in a team $t$ just in case $t$ is a subteam of the union of some $s$ which makes $\phi$ and some $u$ which makes $\psi$ true. This variant always preserves convexity, whereas $\vee$ does not, as we see below. Our convex variant of dependence logic $\CONDEP$ extends $\PLD$ with the epistemic might-operator $\filleddiamond$, where $\filleddiamond \phi$ is true in $t$ just in case $t$ contains a nonempty subteam in which $\phi$ is true. See Figure \ref{c:figure:vee_might}. Its name is due to the fact that similar operators have been used to model the meanings of epistemic modalities such as the `might' in `it might be raining' (see, e.g., \cite{veltman,yalcin,bledin}). The idea is that a team represents an information state, and if an information state contains some possible states of affairs in which it is raining ($t\vDash \filleddiamond r$), then the information state does not rule out the proposition that it is raining, and hence, for all that one knows given the information in the state, it might be raining.

$\PLI$, featuring the \emph{intuitionistic implication}---where $t\vDash \phi\to\psi$ just in case whenever $\phi$ is true in a subteam of $t$, so is $\psi$---is the classical basis for propositional inquisitive logic $\INQ$. $\INQ$ extends $\PLI$ with the global disjunction $\vvee$ which has the classical disjunction satisfaction clause (with respect to teams), but which, as we will see, behaves nonclassically. The inquisitive disjunction is used to model the meanings of question in inquisitive logic/semantics; for instance, $p\vvee q$ represents the question `$p$ or $q$?'---the question is true (or \emph{supported}) in a team just in case one of its answers is true. See Figure \ref{c:figure:inq}. $\CONINQ$ replaces the global disjunction $\vvee$ of $\INQ$ with $\vveedot$, which, similarly to $\veedot$, guarantees the preservation of convexity, whereas $\vvee$ does not. $\PLIM$, the $\vveedot$-free fragment of $\CONINQ$, is also sufficiently strong to capture all convex properties (we introduce the stronger logic $\CONINQ$ because there is an interesting sense in which $\CONINQ$ extends $\INQ$, whereas it is unknown whether $\PLIM$ also extends $\INQ$ in this way---see Section \ref{c:section:uniform_definability}).

\subsubsection{Preliminaries: Closure Properties}
We've repeatedly stated that (a) the fragments $\PLV, \PLD$, and $\PLI$ are ``classical''; and (b) our logics $\CONDEP$, $\CONINQ,$ and $\PLIM$ are ``convex''. To justify this terminology, we will need the concept of a closure property:

\begin{definition}[Closure properties]\label{c:definition:closure_props}
    Given any formula $\varphi$, we say that
    \begin{align*}
        &\varphi \text{ is  \textit{downward closed}}&& \text{iff} &&\left[t\vDash \varphi \text{ and } s \subseteq t\right] \Rightarrow s \vDash \varphi;\\
        &\varphi \text{ is  \textit{upward closed}}&& \text{iff} &&\left[t\vDash \varphi \text{ and } s \supseteq t\right] \Rightarrow s \vDash \varphi;\\
        &\varphi \text{ is  \textit{convex}}&& \text{iff} &&\left[t\vDash \varphi ,\text{ } s \vDash \varphi\text{, and }s\subseteq u\subseteq t\right] \Rightarrow u \vDash \varphi;\\
        &\varphi \text{ is \textit{union closed}}&& \text{iff} &&\left[t\vDash \varphi \text{ for all } t\in T\neq \emptyset\right] \Rightarrow \bigcup T  \vDash \varphi;\\
        &\varphi \text{ has the \textit{empty team property}}&& \text{iff}&& \varnothing \vDash \varphi ;\\
        &\varphi \text{ is \textit{flat}}&& \text{iff}&& t\vDash\varphi  \Leftrightarrow \left[\{v\}\vDash \varphi\text{ for all }v\in t\right].
    \end{align*}
\end{definition}
One can then readily verify the following facts:


\begin{fact}\label{c:fact:closure_props}\ For all formulas $\varphi$:
\begin{enumerate}
    \item[(i)] $\phi$ is flat iff $\phi$ is downward and union closed, and $\phi$ has the empty team property.
    \item[(ii)] $\phi$ is downward closed iff $\phi$ is convex and $[$if $\phi\nequiv \Bot$, then $\emptyset\vDash \phi ]$.
    \item[(iii)] $\phi(\mathsf{X})$ is upward closed iff $\phi$ is convex and $[$if $\phi\nequiv \Bot$, then $2^{\mathsf{X}}\vDash \phi ]$.
\end{enumerate}
\end{fact}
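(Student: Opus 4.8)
The plan is to prove each of the three biconditionals by establishing the two directions separately; beyond routine unwinding of the definitions in Definition \ref{c:definition:closure_props}, the single conceptual ingredient I need is the remark (recorded just after Definition \ref{c:definition:semantics_PLNE}) that $t\vDash\Bot$ holds for no team $t$. Consequently $\phi\equiv\Bot$ exactly when $\phi$ is satisfied by no team, so that $\phi\nequiv\Bot$ is equivalent to $\phi$ being satisfiable. This observation is the hinge for items (ii) and (iii): it converts the side conditions $[\phi\nequiv\Bot\Rightarrow\varnothing\vDash\phi]$ and $[\phi\nequiv\Bot\Rightarrow 2^{\mathsf{X}}\vDash\phi]$ into the statement that, whenever some team satisfies $\phi$, a canonical extremal team (the empty team $\varnothing$, respectively the full team $2^{\mathsf{X}}$, which are the bottom and top of the team lattice over $\mathsf{X}$) also satisfies $\phi$.

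For (i), in the left-to-right direction I would assume flatness and read off each of the three properties directly: downward closure and union closure both follow because satisfaction is determined by membership of the singletons $\{v\}$ in the relevant team, and the empty team property holds because the flatness clause is vacuously satisfied when $t=\varnothing$. For right-to-left, given downward closure, union closure, and the empty team property, I establish the flatness biconditional: the forward half is immediate from downward closure applied to each singleton $\{v\}\subseteq t$, and the backward half splits on whether $t=\varnothing$ (handled by the empty team property) or $t\neq\varnothing$ (handled by writing $t=\bigcup\{\{v\}\mid v\in t\}$ and invoking union closure on this nonempty collection).

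For (ii) and (iii) the arguments are parallel. In each case the left-to-right direction is the easy one: downward (resp.\ upward) closure immediately yields convexity by applying closure across the pair $s\subseteq u\subseteq t$, and the side condition follows by taking any witnessing team $t\vDash\phi$ (which exists precisely when $\phi\nequiv\Bot$) and closing down to $\varnothing\subseteq t$ (resp.\ up to $t\subseteq 2^{\mathsf{X}}$). The right-to-left direction is where the hinge observation does its work: to derive, say, downward closure from convexity plus $[\phi\nequiv\Bot\Rightarrow\varnothing\vDash\phi]$, I take $t\vDash\phi$ and $s\subseteq t$; since $t$ witnesses $\phi\nequiv\Bot$ I obtain $\varnothing\vDash\phi$, and then convexity applied to $\varnothing\subseteq s\subseteq t$ gives $s\vDash\phi$. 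The proof of (iii) is identical after replacing $\varnothing$ by $2^{\mathsf{X}}$ and reversing the inclusions (using that every team with domain $\mathsf{X}$ lies inside $2^{\mathsf{X}}$). I do not anticipate a genuine obstacle here, the facts being ``readily verified'' as stated; the only points requiring care are keeping the two extremal teams and the direction of the inclusions straight between (ii) and (iii), and invoking the satisfiability-versus-$\Bot$ equivalence at exactly the right moment.
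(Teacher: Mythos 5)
Your proof is correct, and since the paper states this fact without proof (``One can then readily verify the following facts''), your argument supplies exactly the routine verification that was intended: the decisive observation that $\phi\nequiv\Bot$ is equivalent to satisfiability of $\phi$ (because no team satisfies $\Bot$) is identified and deployed correctly, and the convexity applications to the extremal pairs $\varnothing\subseteq s\subseteq t$ and $t\subseteq s\subseteq 2^{\mathsf{X}}$ are exactly right.
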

\begin{fact} \label{c:fact:classical_correspondence} All classical formulas (formulas of $\PL_\vee$/$\PLD$/$\PLI$) are flat and their team semantics are equivalent to their usual single-valuation semantics on singletons; that is, for all classical $\alpha$:
    $$t\vDash \alpha \iff \{v\}\vDash \alpha \text{ for all }v \in t \iff v\vDash \alpha \text{ for all }v \in t, $$
where $\vDash$ on the right is the usual single-valuation truth relation for classical propositional logic.
\end{fact}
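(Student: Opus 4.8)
The plan is to prove the stronger, purely inductive statement that for every classical formula $\alpha$ and every team $t$,
$$t\vDash \alpha \iff v\vDash \alpha \textit{ for every }v\in t,$$
where $\vDash$ on the right is ordinary single-valuation (Tarskian) satisfaction. Once this is in hand, instantiating it at a singleton team $\{v\}$ gives $\{v\}\vDash\alpha \iff v\vDash\alpha$, which is the right-hand equivalence of the Fact; substituting this back yields $t\vDash\alpha \iff [\{v\}\vDash\alpha$ for all $v\in t]$, i.e.\ the middle equivalence, and this last biconditional is precisely the definition of flatness. So everything reduces to the displayed claim, which I would prove by induction on $\alpha$. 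A single induction covers all three bases $\PLV$, $\PLD$, $\PLI$: the atoms $p,\bot$ and conjunction are common, and the remaining connectives ($\lor$; $\dor$; $\to$; and the primitive $\lnot$ of $\PLV$/$\PLD$) are each treated once.

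The base and Boolean cases are immediate: $t\vDash p$ unfolds directly to ``$v(p)=1$ for all $v\in t$'', while $t\vDash\bot$ holds iff $t=\varnothing$, which is iff ``$v\vDash\bot$ for all $v\in t$'' holds vacuously; conjunction follows by distributing the universal quantifier over $v\in t$ through the induction hypothesis. For negation $\lnot\alpha$ the clause is stated via singletons ($t\vDash\lnot\alpha$ iff $\{v\}\nvDash\alpha$ for all $v\in t$), so I would apply the induction hypothesis at $\{v\}$ to rewrite $\{v\}\nvDash\alpha$ as $v\nvDash\alpha$, giving ``$v\vDash\lnot\alpha$ for all $v\in t$''. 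The two disjunctions are handled together: forward, if $t=s\cup u$ (resp.\ $t\subseteq s\cup u$) with $s\vDash\alpha$ and $u\vDash\beta$, then each $v\in t$ lies in $s$ or $u$ and hence satisfies $\alpha$ or $\beta$ by the induction hypothesis; conversely, if every $v\in t$ satisfies $\alpha$ or $\beta$, the sets $s=\{v\in t\mid v\vDash\alpha\}$ and $u=\{v\in t\mid v\vDash\beta\}$ witness both clauses, since $t=s\cup u$. For implication $\to$, the forward direction tests the subteam $\{v\}$ for each $v\in t$, and the backward direction checks an arbitrary $s\subseteq t$, using that every valuation in $s$ is a valuation in $t$.

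There is no real obstacle here; the work is simply verifying that each clause preserves the flat/classical correspondence. The one point to get right is the bookkeeping in the negation and implication cases, where the clauses are phrased through subteams and singletons rather than through the elements of $t$ directly: there the induction hypothesis must be invoked at teams other than $t$ (namely at $\{v\}$, or at $s\subseteq t$), and it is exactly this appeal that reveals why these connectives stay flat. The disjunction cases require exhibiting the witnessing split in the backward direction, but the canonical choice $s=\{v\in t\mid v\vDash\alpha\}$, $u=\{v\in t\mid v\vDash\beta\}$ works uniformly for both $\lor$ and $\dor$ (for $\dor$ one only needs $t\subseteq s\cup u$, which holds a fortiori since in fact $t=s\cup u$).
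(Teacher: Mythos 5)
Your proof is correct: the paper states this Fact without proof (it is offered as one of the facts the reader can ``readily verify''), and your structural induction --- with the induction hypothesis quantified over all teams so that it can be invoked at singletons $\{v\}$ and at subteams $s\subseteq t$ in the negation and implication cases, and with the canonical split $s=\{v\in t\mid v\vDash\alpha\}$, $u=\{v\in t\mid v\vDash\beta\}$ witnessing both disjunctions --- is exactly the standard verification the paper intends. Nothing is missing; all connective cases of all three bases are covered and each step is sound.
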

This explains (a) why we refer to $\PLV, \PLD$, and $\PLI$ as ``classical bases'': from Fact \ref{c:fact:classical_correspondence}, it follows that for any set $\Gamma\cup \{\alpha\}$ of classical formulas,
    $$\Gamma\vDash \alpha \iff \Gamma\vDash_c \alpha,$$
where $\vDash_c $ is the entailment relation for single-valuation semantics. Given these facts, we use the notations $\{v\}\vDash \alpha$ and $v\vDash \alpha$ interchangeably whenever $\alpha$ is classical, and similarly for $\Gamma\vDash \alpha$ and $\Gamma\vDash_c \alpha$.


As for (b), we have the following proposition:

    \begin{proposition} \label{c:prop:convex_logics_convex}
        Formulas of $\DEP$ and $\INQ$ are downward closed (and hence also convex). Formulas of $\CONDEP$, $\CONINQ$, and $\PLIM$ are convex.
    \end{proposition}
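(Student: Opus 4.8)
The plan is to prove both halves of Proposition~\ref{c:prop:convex_logics_convex} by a single structural induction, exploiting the fact that almost every connective forces a closure property on the compound formula \emph{regardless of the closure behaviour of its arguments}; the only genuinely inductive connective will be $\land$. Throughout I use that both downward and upward closure imply convexity: if $s\subseteq u\subseteq t$ with $s\vDash\phi$ and $t\vDash\phi$, then $u\subseteq t$ already yields $u\vDash\phi$ when $\phi$ is downward closed, and $s\subseteq u$ yields it when $\phi$ is upward closed.

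For the downward-closure claims about $\DEP$ and $\INQ$, I would first dispatch the atomic cases: $p$, $\bot$, the dependence atoms $\dep{p_1,\ldots,p_n}{p}$, and the negated classical formulas $\neg\alpha$ are each downward closed — the first, second and last because by Fact~\ref{c:fact:classical_correspondence} (together with the $\neg\alpha$-clause) they are flat and hence downward closed, and the dependence atom because deleting valuations cannot introduce a new violation of functional dependence. For the binary connectives: $\land$ corresponds to intersecting the two sets of teams verifying the conjuncts, and an intersection of downward-closed sets is downward closed; $\vvee$ corresponds to their union, and a union of downward-closed sets is downward closed; and $\phi\to\psi$ is automatically downward closed because $t\vDash\phi\to\psi$ quantifies universally over all $s\subseteq t$, so passing to $t'\subseteq t$ only restricts the range of that quantifier. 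The one case that uses the induction hypothesis essentially is the split disjunction $\vee$: from $t=s\cup u$ with $s\vDash\phi$, $u\vDash\psi$, and $t'\subseteq t$, I would write $t'=(s\cap t')\cup(u\cap t')$ and apply downward closure of $\phi$ and $\psi$ to the two pieces.

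For the convexity claims about $\CONDEP$, $\CONINQ$, and $\PLIM$, the key observation is that each non-conjunctive connective yields a compound with a \emph{fixed} closure property independent of its arguments. Concretely, $\phi\veedot\psi$, $\phi\to\psi$, and $\phi\vveedot\psi$ are always downward closed: in each case the witnessing data (an $s,u$ with $t\subseteq s\cup u$; the universal condition on subteams; an $s\supseteq t$ with $s\vDash\phi$ or $s\vDash\psi$) survives verbatim when $t$ is replaced by any $t'\subseteq t$, since then $t'\subseteq t\subseteq s\cup u$, resp.\ $t'\subseteq t\subseteq s$. Dually, $\filleddiamond\phi$ is always upward closed, since a nonempty $s\subseteq t$ with $s\vDash\phi$ remains a nonempty subteam of every $t'\supseteq t$. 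The atomic cases and $\neg\beta$ are downward closed exactly as before, and $\land$ propagates convexity because an intersection of two convex sets is convex. Since in $\CONDEP$, $\CONINQ$, and $\PLIM$ every subformula is either an atom, a $\veedot$-, $\to$-, $\vveedot$-, or $\filleddiamond$-formula (convex by the automatic closure facts, needing no appeal to the hypothesis), or a conjunction (convex by the hypothesis), the induction then closes uniformly.

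I do not expect a serious obstacle: the content lies in the bookkeeping of which connective forces downward versus upward closure, and in the single $\vee$-case that genuinely invokes the hypothesis. The conceptually important point — and the reason these per-connective facts are the crux rather than a mere reuse of the downward-closure template — is that $\vee$ and $\vvee$ do \emph{not} preserve convexity, which is precisely why $\CONDEP$ and $\CONINQ$ replace them with $\veedot$ and $\vveedot$; that negative fact is established separately later in the section and is what makes the automatic-closure observations indispensable here.
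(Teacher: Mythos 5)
Your proposal is correct and follows essentially the same route as the paper's proof: a structural induction whose crux is that $\to$, $\veedot$, $\vveedot$, and the dependence atoms are downward closed (hence convex) \emph{regardless} of their arguments, that $\filleddiamond\phi$ is always upward closed (hence convex), and that $\land$ and $\vee$ propagate the relevant closure property inductively --- exactly the observations the paper compresses into ``most cases are straightforward.'' The only quibble is presentational: in the downward-closure half, $\vvee$ and $\vee$ also invoke the induction hypothesis (not just $\land$), as your own treatment of those cases in fact shows.
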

    \begin{proof}
        By induction on the structure of formulas $\phi$. Most cases are straightforward---note in particular that $\dep{p_1,\ldots,p_n}{p}$, $\phi\to\psi$, $\phi\veedot \psi$, and $\phi\vveedot \psi$ are always downward closed, and hence convex, and that $\filleddiamond \phi$ is always upward closed, and hence convex.
    \end{proof}
    In the following subsection, we will also show the converse: anything convex is expressible by a formula of $\CONDEP$/$\CONINQ$/$\PLIM$.
For now, a few more remarks on the expressiveness of the introduced logics and our choice of $\veedot$ and $\vveedot$ over the usual $\vee$ and $\vvee$, respectively. 


First, it is easy to see that formulas with $\filleddiamond$ may violate downward closure as well as the empty team property. Moreover, formulas of each of our nonclassical logics need not be union closed. Consider, for instance, $\filleddiamond p\to q$. We have that $\{v_{pq}\}\vDash \filleddiamond p\to q$ and $\{v_{\overline{p}\overline{q}}\}\vDash \filleddiamond p\to q$, but $\{v_{pq},v_{\overline{p}\overline{q}}\}\nvDash \filleddiamond p\to q$. Similarly, for all $\phi\in\{\dep{q}{p},p \vvee \lnot p,p \vveedot \lnot p\}$, we have $\{v_{pq}\}\vDash \phi$ and $\{v_{\overline{p}q}\}\vDash \phi$, but $\{v_{pq},v_{\overline{p}q}\}\nvDash \phi$. 

Second, as mentioned above, whereas the variant $\veedot$ always preserves convexity, the split disjunction $\vee$ need not do so, and in fact, as implied by the fact below, no logic expressively complete for the class of all convex properties can incorporate $\vee$---this is why we swap $\vee$ for $\veedot $ in our convex variant of dependence logic $\CONDEP$. The situation with $\vvee$ and $\vveedot$ is analogous. 
\begin{fact} \label{c:fact:disjunctions_convexity_break}
    There are convex $\phi,\psi$ such that $\phi\vee\psi$ is not convex. Similarly, there are convex $\phi,\psi$ such that $\phi\vvee\psi$ is not convex. 
\end{fact}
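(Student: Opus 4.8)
The plan is to prove both halves by exhibiting, in each case, two concrete convex formulas whose disjunction has an interior ``gap''. Throughout I work over $\mathsf{X}=\{p,q\}$ and write the valuations as $v_{pq},v_{p\overline q},v_{\overline p q},v_{\overline p\overline q}$ in the usual way.

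For $\vvee$ the example is immediate. Take $\phi:=p$ and $\psi:=\filleddiamond p\wedge\filleddiamond\lnot p$. By Fact~\ref{c:fact:classical_correspondence} $\phi$ is flat and by Proposition~\ref{c:prop:convex_logics_convex} (the $\filleddiamond$- and $\wedge$-cases) $\psi$ is upward closed, so both are convex. Now $\phi$ is true exactly in the teams of $p$-valuations, while $\psi$ is true exactly in the teams containing both a $p$- and a $\overline p$-valuation; hence $\varnothing\vDash\phi\vvee\psi$ and $\{v_{pq},v_{\overline p q}\}\vDash\phi\vvee\psi$ (the latter via $\psi$), whereas $\{v_{\overline p q}\}\nvDash\phi\vvee\psi$. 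Since $\varnothing\subseteq\{v_{\overline p q}\}\subseteq\{v_{pq},v_{\overline p q}\}$, this is a failure of convexity.

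For $\vee$ I first record two reductions that show where to look. The split clause preserves both downward and upward closure (a direct check: for downward closure intersect a witnessing split with the subteam; for upward closure take both components equal to the larger team), so a counterexample cannot use two downward-closed or two upward-closed disjuncts. Moreover, if $\varnothing\vDash\phi\vee\psi$ then $\varnothing\vDash\phi$ and $\varnothing\vDash\psi$, so by Fact~\ref{c:fact:closure_props}(ii) both are downward closed and hence so is $\phi\vee\psi$; thus any counterexample must satisfy $\varnothing\nvDash\phi\vee\psi$. Guided by this, take $\phi:=\filleddiamond(p\wedge q)\wedge q$, which is convex (an up-set intersected with a down-set) and true exactly in $\{v_{pq}\}$ and $\{v_{pq},v_{\overline p q}\}$, and let $\psi$ be a convex formula true exactly in the two teams $\{v_{pq}\}$ and $\{v_{p\overline q},v_{\overline p q}\}$. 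Computing the split clause gives that $\phi\vee\psi$ is true exactly in
\[
\{v_{pq}\},\quad \{v_{pq},v_{\overline p q}\},\quad \{v_{pq},v_{p\overline q},v_{\overline p q}\};
\]
since $\{v_{pq}\}\subseteq\{v_{pq},v_{p\overline q}\}\subseteq\{v_{pq},v_{p\overline q},v_{\overline p q}\}$ with the outer two teams satisfying $\phi\vee\psi$ but the middle one not, $\phi\vee\psi$ is not convex.

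The main obstacle is the disjunct $\psi$. The property $\{\{v_{pq}\},\{v_{p\overline q},v_{\overline p q}\}\}$ is an order-convex antichain that is neither upward nor downward closed and lacks the empty-team property, and the reductions above show this ``disconnected'' shape is forced rather than incidental: single intervals, nonempty down-sets, and constancy-style blocks can all be checked to yield convex tensors (for instance because the union-sumset of two intervals is again an interval). The delicate point is therefore to realize such a $\psi$ by an actual convex formula. I expect to do this in one of two ways: either invoke the expressive completeness of $\CONDEP$ (respectively $\CONINQ$, $\PLIM$) for all convex properties, established in the following subsection and independent of the present fact, to obtain $\psi$ directly; or write $\psi$ explicitly using a convexity-preserving disjunction (e.g.\ $\vveedot$) to single out the two ``locations'' $\{v_{pq}\}$ and $\{v_{p\overline q},v_{\overline p q}\}$, together with $\filleddiamond$ forcing the pair $\{v_{p\overline q},v_{\overline p q}\}$ to occur in full — the feature that blocks the covering of $\{v_{pq},v_{p\overline q}\}$ while still admitting $\{v_{pq},v_{p\overline q},v_{\overline p q}\}$.
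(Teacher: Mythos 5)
Your proof is correct, but on the $\vee$ half it takes a genuinely different route from the paper's. Your $\vvee$ counterexample is complete, self-contained, and of exactly the paper's kind: the paper pits the flat $p$ against the upward-closed $\filleddiamond q$ (gap at $\{v_{p\overline q},v_{\overline p\overline q}\}$), while you pit $p$ against $\filleddiamond p\land\filleddiamond\lnot p$ (gap at $\{v_{\overline p q}\}$). For $\vee$, however, the paper stays entirely elementary: it takes the single formula $\phi:=(((p\land\NE)\vee(\lnot p\land\NE))\to\bot)\land\filleddiamond r$ (equivalently $\con{p}\land\filleddiamond r$), which is convex for the same down-set-meets-up-set reason as your $\filleddiamond(p\land q)\land q$, and checks directly that $\phi\vee\phi$ holds at $\{v_{\overline pr}\}$ and $\{v_{\overline pr},v_{p\overline r},v_{pr}\}$ but fails at $\{v_{\overline pr},v_{p\overline r}\}$. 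You instead reason at the level of properties---an antichain tensored with an interval leaves a gap---and realize the antichain $\{\{v_{pq}\},\{v_{p\overline q},v_{\overline pq}\}\}$ as a formula by appealing to the expressive completeness of $\CONDEP$. That appeal is legitimate and non-circular (nothing in the proof of Theorem \ref{c:theorem:convex_expressive_completeness} uses the present Fact), and it essentially anticipates the paper's own second treatment: Fact \ref{c:fact:disjunctions_convexity_break2} in Section \ref{c:section:uniform_definability} reproves the claim abstractly as $\mathbb{C}\vee\mathbb{C}\not\subseteq\mathbb{C}$ via just such property-level examples. The trade-off: the paper's proof is self-contained within the preliminaries, which matters expositionally because this Fact is what motivates swapping $\vee$ for $\veedot$ before any expressive completeness machinery exists, whereas your argument inverts that order; on the other hand, your preliminary reductions (the split clause preserves both downward and upward closure, and any counterexample must lack the empty team property) explain \emph{why} both your example and the paper's are forced to use ``disconnected'' non-monotone disjuncts, which the paper leaves implicit. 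Finally, your fallback route (2) is only a sketch, but it can be completed concretely if self-containedness is wanted: $\psi:=(p\land q\land\NE)\vvee\bigl(((p\land\lnot q)\curlyvee(\lnot p\land q))\land\filleddiamond(p\land\lnot q)\land\filleddiamond(\lnot p\land q)\bigr)$ holds in exactly the two teams $\{v_{pq}\}$ and $\{v_{p\overline q},v_{\overline pq}\}$; this is a two-element antichain, hence convex (using $\vvee$ here is harmless, since the convexity of the resulting property is verified directly rather than inferred from a closure rule).
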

\begin{proof}
For the first part, let $\phi:=(((p\land \NE)\vee(\lnot p \land \NE))\to \bot) \land \filleddiamond r$. Clearly the first conjunct is downward closed (and hence convex) and the second is upward closed (and hence convex); therefore, since conjunction preserves convexity, $\phi$ is convex. Now observe that $\{v_{\overline{p}r}\}\vDash \phi\vee\phi$; $\{v_{\overline{p}r},v_{p\overline{r}}\}\nvDash \phi\vee\phi$; and $\{v_{\overline{p}r},v_{p\overline{r}},v_{pr}\}\vDash \phi\vee\phi$, where $\{v_{\overline{p}r}\}\subseteq \{v_{\overline{p}r},v_{p\overline{r}}\}\subseteq \{v_{\overline{p}r},v_{p\overline{r}},v_{pr}\}$.

\sloppy
For the second part, note that $p$ and $\filleddiamond q$ are convex. We have $\{v_{p\overline{q}}\}\vDash p\vvee \filleddiamond q$; $\{v_{p\overline{q}},v_{\overline{p}\overline{q}}\}\nvDash p\vvee \filleddiamond q $; and $\{v_{p\overline{q}},v_{\overline{p}\overline{q}},v_{pq}\}\vDash p\vvee \filleddiamond q$, where $\{v_{p\overline{q}}\}\subseteq \{v_{p\overline{q}},v_{\overline{p}\overline{q}}\}\subseteq \{v_{p\overline{q}},v_{\overline{p}\overline{q}},v_{pq}\}$.
\end{proof}
\fussy

The above is related to the fact that, as with many team-based logics, the nonclassical logics we consider are not \emph{closed under uniform substitution}: $\phi\vDash \psi$ need not imply $\phi(\chi/p)\vDash \psi(\chi/p)$. For instance, $p\vee p\vDash p$, whereas $(p\vvee \lnot p)\vee (p\vvee \lnot p)\nvDash (p\vvee \lnot p)$ (consider the team $\{v_{p},v_{\overline{p}}\}$), and we have $p\land (q \vee r)\vDash (p\land q)\vee (p\land r)$, whereas $\filleddiamond \top \land (q\lor r)\nvDash (\filleddiamond \top\land q)\vee (\filleddiamond \top\land r)$ (consider the team $\{v_{q\overline{r}}\}$). We return to Fact \ref{c:fact:disjunctions_convexity_break} and its connection with closure under uniform substitution in Section \ref{c:section:uniform_definability}.



\subsubsection{Expressive Completeness}

We measure the expressive power of the logics in terms of the properties (or propositions)---classes of teams---expressible in them.


\begin{definition}[Properties and Expressive Completeness] \label{c:definition:expressive_completeness}
A \emph{(propositional team) property} over $\mathsf{X}$ is a class of (propositional) teams over $\mathsf{X}$. For each formula $\phi(\mathsf{X})$, we denote by $\left\Vert\phi\right\Vert_{\mathsf{X}}$ (or simply $\left\Vert\phi\right\Vert$) the property over $\mathsf{X}$ \emph{expressed} by $\phi$:
$$\left\Vert\phi\right\Vert_{\mathsf{X}}:=\{t\in 2^\mathsf{X}\mid t\vDash \phi\}.$$
Given a class of properties $\mathbb{P}$ and $\mathsf{X}\subseteq \mathsf{P}$, we let $$\mathbb{P}_\mathsf{X}:=\{\PPP\text{ is a property over }\mathsf{X}\mid\PPP\in\mathbb{P}\}.$$ 
We say that a logic $\LOGIC$ is \emph{expressively complete} for a class of properties $\mathbb{P}$, written $\left\Vert\LOGIC\right\Vert=\mathbb{P}$, if for each finite $\mathsf{X}\subseteq \mathsf{P}$, $$\left\Vert\LOGIC\right\Vert_\mathsf{X}:=\{\left\Vert \phi \right\Vert_{\mathsf{X}}\mid \phi\text{ is a formula of }\LOGIC\}=\mathbb{P}_\mathsf{X}.$$
That is, $\LOGIC$ is expressively complete for $\mathbb{P}$ if ($\subseteq$) each property $\left\Vert\phi\right\Vert$ definable by a formula $\phi$ of $\LOGIC$ is in $\mathbb{P}$, and ($\supseteq$) each property in $\mathbb{P}$ over a finite $\mathsf{X}$ is definable by a formula of $\LOGIC$. We also write $\left\Vert\LOGIC\right\Vert\subseteq\mathbb{P}$ to mean that for each finite $\mathsf{X}\subseteq \mathsf{P}$, $\left\Vert\LOGIC\right\Vert_\mathsf{X}\subseteq\mathbb{P}_\mathsf{X}$, etc.
\end{definition}

The definition of closure properties is extended to team properties in the obvious way. For instance, a property $\PPP$ is downward closed if $[t\in \PPP$ and $s\subseteq t]$ implies $s\in \PPP$. Let $\mathbb{C}$/$\mathbb{CU}$/$\mathbb{DE}$/$\mathbb{U}$/$\mathbb{F}$ be the class of all convex/convex and union-closed/downward-closed and empty-team-property/upward-closed/flat properties, respectively. 

We show that each of $\CONDEP$, $\CONINQ$, and $\PLIM$ is complete for $\mathbb{C}$, i.e., $\left\Vert \CONDEP\right \Vert=\left\Vert \CONINQ\right \Vert=\left\Vert \PLIM\right \Vert=\mathbb{C}$. Note that we have already shown, in Proposition \ref{c:prop:convex_logics_convex}, that $\left\Vert \CONDEP\right \Vert\subseteq \mathbb{C}$, that $\left\Vert \CONINQ\right \Vert\subseteq \mathbb{C}$, and that $\left\Vert \PLIM\right \Vert\subseteq \mathbb{C}$.

To show the other direction, we construct, for each property $\PPP$ in $\mathbb{C}_\mathsf{X}$, a formula in $\left\Vert\CONDEP\right\Vert_\mathsf{X}$ that expresses $\PPP$---a characteristic formula for $\PPP$; and similarly for the other logics. We begin by recalling characteristic formulas for valuations and teams $\chi^\mathsf{X}_v,\chi^\mathsf{X}_t\in \PLV/\PLD/\PLI$ from the literature (see, e.g., \cite{CiardelliRoelofsen2011,hella2014,yangvaananen2016,yang2017}). Fix a finite $\mathsf{X}\subseteq \mathsf{P}$. For a valuation $v$, let
$$\chi^\mathsf{X}_v:=\bigwedge\{p\mid p\in \mathsf{X},v(p)=1\}\land \bigwedge\{\lnot p\mid p\in \mathsf{X},v(p)=0\}.$$
It is then easy to see that:
$$w\vDash \chi^\mathsf{X}_v \iff w\restriction \mathsf{X}=v\restriction \mathsf{X} ,$$ and if $w,v\in 2^\mathsf{X}$, then $w\vDash \chi^\mathsf{X}_v \iff w=v$. We usually write simply $\chi_v$. For a team $t$, we let:
$$ \chi^\mathsf{X}_t:= \bigcurlyvee_{v\in t} \chi^\mathsf{X}_v.$$
Then:
$$s\vDash \chi^\mathsf{X}_t\iff s\restriction \mathsf{X}\subseteq t\restriction\mathsf{X}  \quad \quad \text{(where for a team $u$, $u\restriction \mathsf{X}:=\{v\restriction \mathsf{X}\mid v\in u\}$),}$$
and if $t,s\subseteq 2^\mathsf{X}$, then $s\vDash \chi^\mathsf{X}_t \iff s\subseteq t$. Again, we usually write simply $\chi_t$. Note that since for a given finite $\mathsf{X}$, there are only finitely many $\chi_v^\mathsf{X}$, we may assume the disjunction in $\chi_t^\mathsf{X}$ to be finite and therefore for the formula to be well-defined.\footnote{More precisely, we choose, for each infinite $t$, some finite $s$ with $\{\chi^{\mathsf{X}}_v\mid v\in t\}=\{\chi^{\mathsf{X}}_v\mid v\in s\}$ to act as the representative of $t$, and define $\chi^{\mathsf{X}}_t:=\chi^{\mathsf{X}}_s$. Similar remarks apply to the characteristic formulas for properties defined below---observe that this is what allows us to treat all properties over a finite $\mathsf{X}$ as if they were finite.} Observe also that we have used the defined disjunction $\curlyvee=\lnot \land \lnot$ (available in each of our logics) in the definition of $\chi_t$; it is, however, easy to check that $\bigcurlyvee_{v\in t} \chi_v \equiv \bigvee_{v\in t} \chi_v \equiv \bigveedot_{v\in t} \chi_v$.

It is instructive to present our construction of the characteristic formulas in a schematic manner. Note first that the empty property $\PPP=\emptyset$ is convex, and that it is expressible in each of our logics using the formula(s) $\Bot$. As for nonempty properties, we construct, in each of our logics, for each such property $\PPP$, a formula $\chi^\mathbb{D}_\PPP$ such that 
\begin{align*} \label{c:eq:prop_D}
 t\vDash \chi^\mathbb{D}_\PPP  \iff \exists s\in \PPP: t\subseteq s,\tag*{$(*)$}
\end{align*}
  and a formula $\chi^\mathbb{U}_\PPP$ such that 
\begin{align*}
t\vDash \chi^\mathbb{U}_\PPP  \iff \exists s\in \PPP: t\supseteq s.
\end{align*}
The formulas $\chi^\mathbb{D}_\PPP$ are characteristic formulas for nonempty downward-closed properties in that for nonempty downward-closed $\PPP$, $\left\Vert\chi^\mathbb{D}_\PPP\right\Vert=\PPP$; similarly the formulas $\chi^\mathbb{U}_\PPP$ are characteristic formulas for nonempty upward-closed properties. Using these formulas, we construct characteristic formulas for nonempty convex properties as follows:
\begin{lemma} \label{c:lemma:convex_char_formula}
    Let $\mathsf{X}\subseteq\mathsf{P} $ be finite, and for each $\PPP\neq \emptyset$ over $\mathsf{X}$, let $\chi^{\mathsf{X},\mathbb{D}}_\PPP$ and $\chi^{\mathsf{X},\mathbb{U}}_\PPP$ be such that for any $t\subseteq 2^{\mathsf{X}}$, $t\vDash \chi^{\mathsf{X},\mathbb{D}}_\PPP  \iff \exists s\in \PPP: t\subseteq s$ and $t\vDash \chi^{\mathsf{X},\mathbb{U}}_\PPP  \iff \exists s\in \PPP: t\supseteq s$. Then for any convex $\PPP\neq \emptyset$ over $\mathsf{X}$, $\left\Vert \chi^{\mathsf{X},\mathbb{D}}_\PPP\land \chi^{\mathsf{X},\mathbb{U}}_\PPP\right \Vert_{\mathsf{X}}=\PPP  $.
\end{lemma}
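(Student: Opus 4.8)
The plan is to observe that the conjunction $\chi^{\mathsf{X},\mathbb{D}}_\PPP\land\chi^{\mathsf{X},\mathbb{U}}_\PPP$ simply ``sandwiches'' a team between a member of $\PPP$ from below and a member of $\PPP$ from above, after which convexity does all the work. Concretely, I would first unfold the semantic clause for $\land$ together with the two defining properties of $\chi^{\mathsf{X},\mathbb{D}}_\PPP$ and $\chi^{\mathsf{X},\mathbb{U}}_\PPP$ to obtain, for any $t\subseteq 2^\mathsf{X}$,
$$t\vDash \chi^{\mathsf{X},\mathbb{D}}_\PPP\land\chi^{\mathsf{X},\mathbb{U}}_\PPP \iff (\exists s\in\PPP:\ t\subseteq s)\text{ and }(\exists s'\in\PPP:\ s'\subseteq t),$$
that is, iff there exist $s,s'\in\PPP$ with $s'\subseteq t\subseteq s$. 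The lemma then amounts to the equivalence between this sandwiching condition and $t\in\PPP$.

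For the inclusion $\PPP\subseteq\left\Vert\chi^{\mathsf{X},\mathbb{D}}_\PPP\land\chi^{\mathsf{X},\mathbb{U}}_\PPP\right\Vert_{\mathsf{X}}$, I would take any $t\in\PPP$ and witness both existentials by $t$ itself, using $t\subseteq t$ and $t\supseteq t$; this requires nothing beyond $t\in\PPP$. For the converse inclusion, I would take a $t$ with $s'\subseteq t\subseteq s$ for some $s,s'\in\PPP$ and apply convexity of $\PPP$ directly: with lower witness $s'$, upper witness $s$, and $s'\subseteq t\subseteq s$, the convexity of $\PPP$ immediately yields $t\in\PPP$. (Here I rely on the extension of the convexity closure property to team properties, as stipulated just after Definition \ref{c:definition:expressive_completeness}.)

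I do not expect any genuine obstacle in the lemma itself: it is purely the ``gluing'' step, and the only property of $\PPP$ it invokes is convexity, with nonemptiness needed only to ensure that $\chi^{\mathsf{X},\mathbb{D}}_\PPP$ and $\chi^{\mathsf{X},\mathbb{U}}_\PPP$ are meaningfully defined. The substance of the expressive-completeness result is pushed entirely into the separate construction, in each of $\CONDEP$, $\CONINQ$, and $\PLIM$, of formulas $\chi^{\mathsf{X},\mathbb{D}}_\PPP$ and $\chi^{\mathsf{X},\mathbb{U}}_\PPP$ that are actually definable in the respective logic and that satisfy the stated downward/upward characteristic conditions $(*)$; that construction—rather than the present sandwiching argument—is where the real work lies, and this lemma takes it as given.
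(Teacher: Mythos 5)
Your proof is correct and follows essentially the same route as the paper's: the $\supseteq$ direction witnesses both conjuncts with $t$ itself ($t\subseteq t\subseteq t$), and the $\subseteq$ direction extracts the sandwich $s'\subseteq t\subseteq s$ with $s,s'\in\PPP$ and applies convexity directly. Your framing of where the real work lies (in the separate constructions of $\chi^{\mathsf{X},\mathbb{D}}_\PPP$ and $\chi^{\mathsf{X},\mathbb{U}}_\PPP$) also matches the paper's overall strategy.
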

\begin{proof}
$\supseteq:$ For any $t\in \PPP$, $t\subseteq t\subseteq t$, whence $t\vDash \chi^{\mathbb{D}}_\PPP\land \chi^{\mathbb{U}}_\PPP$.

$\subseteq:$ If $t\vDash \chi^{\mathbb{D}}_\PPP\land \chi^{\mathbb{U}}_\PPP$, then for some $u,s\in \PPP$, $s\subseteq t\subseteq u$, whence $t\in \PPP$ by convexity.
\end{proof}
We first construct the formulas $\chi^{\mathbb{U}}_\PPP$. These can be constructed in the same manner in each of our logics:
\begin{lemma} \label{c:lemma:upward_char_formula}
Let $\mathsf{X}\subseteq \mathsf{P}$ be finite. For $\PPP=\{t_1,\ldots,t_n\}\neq \emptyset$ over $\mathsf{X}$, let  
    $$\chi^{\mathsf{X},\mathbb{U}}_\PPP:=\bigwedge_{v_1\in t_1,\ldots,v_n\in t_n} \filleddiamond (\chi_{v_1}^{\mathsf{X}}\curlyvee \cdots \curlyvee \chi^{\mathsf{X}}_{v_n}).$$
Then for any $t\subseteq 2^{\mathsf{X}}$, $t\vDash \chi^{\mathsf{X},\mathbb{U}}_\PPP  \iff \exists t_i\in \PPP: t\supseteq t_i$.
\end{lemma}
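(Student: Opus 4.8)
The plan is to reduce the lemma, via the flatness of classical formulas, to a purely combinatorial statement about hitting sets versus containment. The key observation is that each conjunct $\filleddiamond(\chi_{v_1}\curlyvee\cdots\curlyvee\chi_{v_n})$ says exactly that $t$ \emph{meets} the finite set of valuations $\{v_1,\dots,v_n\}$.

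First I would compute the meaning of a single conjunct. Fix a tuple $(v_1,\dots,v_n)$ with each $v_i\in t_i$. Since $\curlyvee$ and each $\chi_{v_i}$ are classical, the disjunction $\chi_{v_1}\curlyvee\cdots\curlyvee\chi_{v_n}$ is flat (Fact \ref{c:fact:classical_correspondence}), and a single valuation $w\in 2^{\mathsf{X}}$ satisfies it precisely when $w\in\{v_1,\dots,v_n\}$, using $w\vDash\chi_{v}\iff w=v$ on $2^{\mathsf{X}}$ together with the satisfaction clause for $\curlyvee$. Applying the clause for $\filleddiamond$ then yields
$$t\vDash\filleddiamond(\chi_{v_1}\curlyvee\cdots\curlyvee\chi_{v_n})\iff t\cap\{v_1,\dots,v_n\}\neq\varnothing,$$
since a nonempty subteam of $t$ satisfying the flat disjunction exists exactly when $t$ contains at least one of $v_1,\dots,v_n$ (take the relevant singleton, and conversely). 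Unwinding the outer conjunction, $t\vDash\chi^{\mathsf{X},\mathbb{U}}_\PPP$ holds iff for every tuple $(v_1,\dots,v_n)\in t_1\times\cdots\times t_n$ the team $t$ meets $\{v_1,\dots,v_n\}$.

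It then remains to verify that this condition is equivalent to $\exists t_i\in\PPP:\ t\supseteq t_i$, which I would do in two directions. For $(\Leftarrow)$, if $t\supseteq t_i$ for some $i$, then for an arbitrary tuple we have $v_i\in t_i\subseteq t$, so $v_i$ already witnesses $t\cap\{v_1,\dots,v_n\}\neq\varnothing$; hence every conjunct holds. For $(\Rightarrow)$ I would argue by contraposition: assuming $t\not\supseteq t_i$ for every $i$, I pick for each $i$ a witness $v_i\in t_i\setminus t$; the resulting tuple satisfies $\{v_1,\dots,v_n\}\cap t=\varnothing$, so the corresponding conjunct fails and $t\nvDash\chi^{\mathsf{X},\mathbb{U}}_\PPP$.

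The argument involves no real obstacle; the only points requiring care are bookkeeping ones. The contrapositive direction rests on choosing a transversal of the $t_i$ avoiding $t$, available precisely because each $t_i\setminus t$ is nonempty under the assumption. Finiteness of $\PPP$ guarantees the conjunction is a genuine finite formula, and its nonemptiness ensures there is something to conjoin; the degenerate case in which some $t_i=\varnothing$ is handled automatically, since then the product $t_1\times\cdots\times t_n$ is empty, the conjunction reduces to $\top$ by convention, and the right-hand side likewise holds for every $t$ because $t\supseteq\varnothing=t_i$.
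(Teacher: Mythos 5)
Your proof is correct and takes essentially the same route as the paper's: the right-to-left direction uses the witness $v_i\in t_i\subseteq t$ inside each conjunct, and the left-to-right direction rests on the same key idea of picking a transversal $w_i\in t_i\setminus t$ avoiding $t$, whose corresponding conjunct then fails. Your preliminary reformulation of each conjunct as the hitting condition $t\cap\{v_1,\ldots,v_n\}\neq\varnothing$ (via flatness) is just a cleaner packaging of what the paper argues inline, and your treatment of the degenerate case $t_i=\varnothing$ via the convention $\bigwedge\emptyset=\top$ matches the paper's handling of the empty team property.
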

\begin{proof}

$\Longleftarrow$: Let $t_i\in \PPP$ be such that $t\supseteq t_i$. For each $v_i\in t_i$, we have that $v_i\vDash \chi_{v_i}$, so that also $v_i\vDash \chi_{v_1}\curlyvee \cdots \curlyvee \chi_{v_n}$ for any $v_1\in t_1,\ldots, v_{i-1}\in t_{i-1},v_{i+1}\in t_{i+1},\ldots, v_{n}\in t_{n}$. Therefore, since $v_i\in t_i\subseteq t$, $t\vDash \filleddiamond(  \chi_{v_1}\curlyvee \cdots \curlyvee \chi_{v_n})$. Repeating, this argument, we have $t\vDash \bigwedge_{v_1\in t_1,\ldots,v_n\in t_n} \filleddiamond (\chi_{v_1}\curlyvee \cdots \curlyvee \chi_{v_n})$.

\sloppy
$\Longrightarrow$: Let $t\vDash \bigwedge_{v_1\in t_1,\ldots,v_n\in t_n} \filleddiamond (\chi_{v_1}\curlyvee \cdots \curlyvee \chi_{v_n})$. If $\PPP$ has the empty team property, we have $\emptyset\subseteq t$ where $\emptyset\in \PPP$; we may therefore assume $\PPP$ does not have the empty team property, whence $t_i\neq \emptyset$ for all $1\leq i\leq n$, whence $\bigwedge_{v_1\in t_1,\ldots,v_n\in t_n} \filleddiamond (\chi_{v_1}\curlyvee \cdots \curlyvee \chi_{v_n})$ is not simply $\top$ (recall that $\bigwedge \emptyset=\top$). Assume for contradiction that $t_i\not\subseteq t$ for all $t_i\in \PPP$. Then for each $t_i\in \PPP$ there is some $w_i\in t_i$ such that $w_i\notin t$. By $t\vDash \bigwedge_{v_1\in t_1,\ldots,v_n\in t_n} \filleddiamond (\chi_{v_1}\curlyvee \ldots \curlyvee \chi_{v_n})$, there is, for each $v_1\in t_1,\ldots, v_n\in t_n$, a nonempty $t_{v_1,\ldots, v_n}\subseteq t$ such that $t_{v_1,\ldots,v_n}\vDash \chi_{v_1}\curlyvee \cdots \curlyvee \chi_{v_n}$. Therefore, in particular, there is a nonempty $t_{w_1,\ldots, w_n}\subseteq t$ such that $t_{w_1,\ldots,w_n}\vDash \chi_{w_1}\curlyvee \cdots \curlyvee \chi_{w_n}$. Then $t_{w_1,\ldots,w_n}\subseteq \bigcup_{1\leq i \leq n}\{w_i\}$ and $t_{w_1,\ldots,w_n}\neq \emptyset$. Given $t_{w_1,\ldots,w_n}\subseteq t$ we have $t\cap \bigcup_{1\leq i \leq n}\{w_i\}\neq \emptyset$, contradicting the fact that $w_i\notin t$ for all $t_i\in \PPP$.
\end{proof}
\fussy

We now turn to the formulas $\chi^\mathbb{D}_\PPP$. We will construct these in a distinct manner in each of the three logics $\CONDEP$, $\CONINQ$ and $\PLIM$. We begin constructing these formulas by recalling the following expressive completeness results for propositional dependence logic and propositional inquisitive logic:

 \begin{theorem}[\cite{CiardelliRoelofsen2011,yangvaananen2016}] \label{c:theorem:dep_inq_expressive_completeness} Each of $\DEP$ and $\INQ$ is expressively complete for the class of all downward-closed properties with the empty team property: $$\left\Vert \DEP \right\Vert=\left\Vert\INQ\right \Vert=\mathbb{DE}.$$
    \end{theorem}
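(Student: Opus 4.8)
The plan is to prove both inclusions of $\left\Vert \DEP \right\Vert=\left\Vert\INQ\right \Vert=\mathbb{DE}$, reducing the harder inclusion to the construction of downward characteristic formulas $\chi^{\mathbb{D}}_\PPP$ satisfying property $(*)$, i.e.\ $t\vDash \chi^{\mathbb{D}}_\PPP \iff \exists s\in\PPP: t\subseteq s$. The key observation is that for a \emph{downward-closed} $\PPP$, property $(*)$ already pins $\PPP$ down exactly: if $t\subseteq s\in\PPP$ then $t\in\PPP$ by downward closure, and conversely $t\in\PPP$ witnesses $(*)$ via $s:=t$; hence $\left\Vert\chi^{\mathbb{D}}_\PPP\right\Vert=\PPP$, and unlike in the general convex case there is no need for the upward part from Lemma~\ref{c:lemma:convex_char_formula}. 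Note also that every $\PPP\in\mathbb{DE}$ is nonempty, since the empty-team property forces $\emptyset\in\PPP$, so there is no empty-property case to treat separately.

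For the inclusion $\left\Vert\DEP\right\Vert,\left\Vert\INQ\right\Vert\subseteq\mathbb{DE}$, downward closure of all formulas of $\DEP$ and $\INQ$ is already supplied by Proposition~\ref{c:prop:convex_logics_convex}, so it remains only to check the empty-team property. This is a routine induction on formula structure: the base cases $p$, $\bot$, $\dep{p_1,\ldots,p_n}{p}$ and $\lnot\alpha$ all hold at $\emptyset$ (three of them vacuously), and every connective preserves the property, with $\emptyset=\emptyset\cup\emptyset$ handling $\vee$.

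For $\mathbb{DE}\subseteq\left\Vert\INQ\right\Vert$ the construction is immediate: given finite $\mathsf{X}$ and $\PPP\in\mathbb{DE}_\mathsf{X}$, set $\chi^{\mathbb{D}}_\PPP:=\bigvvee_{t\in\PPP}\chi_t$, a finite inquisitive disjunction since over finite $\mathsf{X}$ there are only finitely many distinct $\chi_t$. Then $s\vDash\chi^{\mathbb{D}}_\PPP$ iff $s\vDash\chi_t$ for some $t\in\PPP$ iff $s\subseteq t$ for some $t\in\PPP$, which is precisely property $(*)$; here the satisfaction clause of $\vvee$ does exactly the work of selecting a single member of $\PPP$.

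The main obstacle is $\mathbb{DE}\subseteq\left\Vert\DEP\right\Vert$, where $\vvee$ is unavailable. Replacing $\vvee$ naively by the split disjunction $\vee$ fails: $\bigvee_{t\in\PPP}\chi_t$ is true in $s$ iff $s$ can be covered by members of $\PPP$, i.e.\ iff $s\subseteq\bigcup\PPP$, which is in general strictly larger than $\PPP$, because the additive, union-forming behaviour of $\vee$ does not match the choice behaviour of $\vvee$. The resolution, due to Yang and Väänänen, is to restore the choice behaviour using the non-flatness of dependence atoms: one conjoins to a split-disjunctive form of the $\chi_t$ dependence atoms that exclude precisely the ``mixed'' teams $s\subseteq\bigcup\PPP$ lying below no single member of $\PPP$ — the guiding intuition being that such a mixed team contains two points agreeing on the antecedent but disagreeing on the consequent of a suitable atom, whereas the subteams of a single member of $\PPP$ do not. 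Carrying this out so that the resulting $\DEP$-formula again satisfies $(*)$ for every downward-closed $\PPP$ is the substantive content of the theorem, and I would invoke \cite{yangvaananen2016} for the explicit formula rather than reproduce it here.
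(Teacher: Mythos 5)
Your proposal matches the paper's treatment where it gives detail: the soundness inclusion via Proposition~\ref{c:prop:convex_logics_convex} plus a routine empty-team induction, and the $\INQ$ direction via $\bigvvee_{t\in\PPP}\chi^{\mathsf{X}}_t$, which is exactly the schema the paper recounts; and since the paper itself only cites \cite{CiardelliRoelofsen2011,yangvaananen2016} for this theorem rather than proving it, delegating the $\DEP$ direction to \cite{yangvaananen2016} is the same move the paper makes. Your observation that downward closure makes property $(*)$ alone suffice (no upward conjunct needed, unlike Lemma~\ref{c:lemma:convex_char_formula}) is also correct.

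However, your guiding intuition for the $\DEP$ construction is not how the Yang--V\"a\"an\"anen formula works, and it cannot be made to work as stated. Take $\mathsf{X}=\{p,q\}$, $s_1:=\{v_{pq},v_{\overline{p}q}\}$, $s_2:=\{v_{pq},v_{p\overline{q}}\}$, and let $\PPP$ be the downward closure of $\{s_1,s_2\}$. The only dependence atoms over $\mathsf{X}$ true in both $s_1$ and $s_2$ are the trivial ones ($\con{p}$, $\con{q}$, $\dep{p}{q}$, $\dep{q}{p}$ each fail in one of them), and trivial atoms are true in the mixed team $u:=\{v_{\overline{p}q},v_{p\overline{q}}\}\notin\PPP$ as well---so no conjunction of atoms outside the disjunction can exclude $u$. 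Conjoining formulas inside the disjuncts fails too: every $\DEP$-formula is downward closed, so any split disjunction of formulas true at $s_1$ and at $s_2$ respectively is automatically true at $u$, which is a union of a subteam of $s_1$ with a subteam of $s_2$. The actual construction (which the paper spells out because it needs it for Lemma~\ref{c:lemma:CONDEP_DE_formulas}) abandons the ``disjoin over $\PPP$'' shape entirely and dualizes: one defines size-bounding formulas $\gamma^{\mathsf{X}}_n$ from constancy atoms and $\vee$ (so $t\vDash\gamma^{\mathsf{X}}_n$ iff $|t|\leq n$), then $\xi^{\mathsf{X}}_s:=\gamma^{\mathsf{X}}_{|s|-1}\vee\chi^{\mathsf{X}}_{2^{\mathsf{X}}\setminus s}$, so that $t\vDash\xi^{\mathsf{X}}_s$ iff $s\not\subseteq t$, and the characteristic formula is the conjunction $\bigwedge_{s\in\left\Vert\top\right\Vert_{\mathsf{X}}\setminus\PPP}\xi^{\mathsf{X}}_s$ over the \emph{excluded} teams; for downward-closed $\PPP$ with the empty team property this holds in $t$ iff $t\in\PPP$. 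So dependence atoms enter only as constancy atoms bounding cardinality, not as detectors of ``mixing.'' Since you explicitly defer to the citation, your overall argument does not collapse, but the sketch you offer as the substantive content would lead a reader down a dead end.
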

The above is proved using characteristic formulas for properties in $\mathbb{DE}$. To be more precise, the proof of the theorem provides (in a similar manner to our proofs involving characteristic formulas), for each of $\DEP$ and $\INQ$, a formula schema $\chi_\QQQ$ (where $\QQQ$ is a metavariable ranging over properties) in that logic such that for any specific property $\PPP$, if $\PPP\in \mathbb{DE}$, then $\PPP=\left\Vert \chi_{\PPP/\QQQ}\right\Vert$.  
Observe that for any such schema $\chi_\QQQ$ and for any $\PPP\in \mathbb{DE}$ (and, indeed, for any other kind of characteristic formula for properties in $\mathbb{DE}$), the formula $\chi_{\PPP/\QQQ}$ satisfies property \ref{c:eq:prop_D} and hence qualifies as a formula $\chi^{\mathbb{D}}_\PPP$. However, it need not be the case that for such a schema and for any nonempty property $\PPP$ whatsoever, the formula $\chi_{\PPP/\QQQ}$ satisfies property \ref{c:eq:prop_D} (and in particular, this need not hold for all nonempty convex properties). But it turns out that by manipulating the schemas of $\DEP$/$\INQ$ used to prove Theorem \ref{c:theorem:dep_inq_expressive_completeness}, we are in each case able to find similar schemas in our convex logics which produce formulas $\chi^\mathbb{D}_\PPP$ with the desired property \ref{c:eq:prop_D}.

Let us first consider the schema in $\INQ$, which we will use to construct the formulas $\chi^{\mathbb{D}}_\PPP$ in $\CONINQ$ and $\PLIM$. This is of the form $\bigvvee_{s\in \QQQ}\chi^{\mathsf{X}}_s$. It is easy to see that, for all nonempty $\PPP$, each formula $\bigvvee_{s\in \PPP}\chi^{\mathsf{X}}_s$ produced by the schema already satisfy property \ref{c:eq:prop_D}, regardless of whether $\PPP\in \mathbb{DE}$. It therefore suffices to show that we can find equivalent schemas/formulas in $\CONINQ$ and $\PLIM$. In the former case this is trivial given that for any downward-closed (and hence for any classical) $\phi$ and $\psi$, we have $\phi\vvee\psi\equiv \phi\vveedot \psi$.
\begin{lemma}\label{c:lemma:coninq_DE_formulas}
Let $\mathsf{X}\subseteq \mathsf{P}$ be finite. For $\PPP\neq \emptyset$ over $\mathsf{X}$, let $$\chi^{\mathsf{X},\mathbb{D}}_\PPP:=\bigvveedotds_{s\in \PPP}\chi^{\mathsf{X}}_s.$$
Then for any $t\subseteq 2^{\mathsf{X}}$, $t\vDash \chi^{\mathsf{X},\mathbb{D}}_\PPP  \iff \exists s\in \PPP: t\subseteq s$.  
\end{lemma}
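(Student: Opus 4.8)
The plan is to follow the remark immediately preceding the lemma and reduce $\bigvveedotds_{s\in\PPP}\chi^{\mathsf{X}}_s$ to the inquisitive disjunction $\bigvvee_{s\in\PPP}\chi^{\mathsf{X}}_s$, for which the desired biconditional (property \ref{c:eq:prop_D}) has already been observed to hold. The only thing to check is that replacing $\vvee$ by $\vveedot$ changes nothing when the disjuncts are downward closed, and that this passes from the binary to the iterated disjunction.

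First I would record the binary equivalence $\phi\vvee\psi\equiv\phi\vveedot\psi$ for downward-closed $\phi,\psi$. The implication $t\vDash\phi\vvee\psi\Rightarrow t\vDash\phi\vveedot\psi$ is immediate, taking the witnessing superteam $u\supseteq t$ in the $\vveedot$-clause to be $t$ itself (this direction needs no assumption). For the converse, if $t\vDash\phi\vveedot\psi$ then some $u\supseteq t$ satisfies one of $\phi,\psi$, and downward closure of that disjunct transfers satisfaction from $u$ down to $t\subseteq u$, so $t\vDash\phi\vvee\psi$.

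Next I would lift this to the $\PPP$-fold disjunction by induction on the finite set $\PPP$ (base case $|\PPP|=1$ being trivial). Writing $\bigvveedotds_{s\in\PPP}\chi^{\mathsf{X}}_s=\chi^{\mathsf{X}}_{s_0}\vveedot\bigvveedotds_{s\in\PPP'}\chi^{\mathsf{X}}_s$ with $\PPP'=\PPP\setminus\{s_0\}$ nonempty, both arguments are downward closed: $\chi^{\mathsf{X}}_{s_0}$ is classical, hence flat and downward closed by Fact \ref{c:fact:classical_correspondence}, while $\bigvveedotds_{s\in\PPP'}\chi^{\mathsf{X}}_s$ is downward closed because every $\vveedot$-formula is (Proposition \ref{c:prop:convex_logics_convex}). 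The binary equivalence then turns the outermost $\vveedot$ into $\vvee$, and the induction hypothesis rewrites the inner disjunction, yielding $\bigvveedotds_{s\in\PPP}\chi^{\mathsf{X}}_s\equiv\bigvvee_{s\in\PPP}\chi^{\mathsf{X}}_s$.

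Finally, since $\bigvvee_{s\in\PPP}\chi^{\mathsf{X}}_s$ has already been noted to satisfy property \ref{c:eq:prop_D}, i.e.\ $t\vDash\bigvvee_{s\in\PPP}\chi^{\mathsf{X}}_s\iff\exists s\in\PPP:t\subseteq s$ for every $t\subseteq 2^{\mathsf{X}}$, the equivalence just established carries this biconditional over to $\chi^{\mathsf{X},\mathbb{D}}_\PPP=\bigvveedotds_{s\in\PPP}\chi^{\mathsf{X}}_s$, which is precisely the claim. The argument is essentially routine; the only point requiring care is the bookkeeping in the induction---ensuring each partial disjunction stays downward closed so that the binary equivalence applies at every step---and this is handled entirely by the closure facts of Proposition \ref{c:prop:convex_logics_convex}. (One could equally give a direct argument from the iterated semantics of $\vveedot$, but the reduction to $\vvee$ is cleaner and reuses work already done.)
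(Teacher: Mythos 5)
Your proof is correct and is essentially the paper's own argument: the discussion immediately preceding the lemma already reduces everything to the equivalence $\phi\vvee\psi\equiv\phi\vveedot\psi$ for downward-closed disjuncts together with the observation that $\bigvvee_{s\in\PPP}\chi^{\mathsf{X}}_s$ satisfies property \ref{c:eq:prop_D}, and the paper's proof then just records the lemma as immediate from the definitions and the fact that $s\vDash\chi^{\mathsf{X}}_s$. Your inductive bookkeeping (using that every $\vveedot$-formula is downward closed) is a careful and correct spelling-out of this; the paper's one-liner short-circuits it by unwinding the $\vveedot$-semantics directly---any witness $u\supseteq t$ with $u\vDash\chi^{\mathsf{X}}_s$ gives $t\subseteq u\subseteq s$, and conversely $u=s$ is a witness since $s\vDash\chi^{\mathsf{X}}_s$.
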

\begin{proof}
    Immediate from the definitions and the fact that $s\vDash \chi_s$.
\end{proof}

As for $\PLIM$, we show below that there is a schema equivalent to $\bigvvee_{s\in \QQQ}\chi_s$ in $\PLIM$ by showing that the global disjunction of any collection of flat formulas is definable in $\PLIM$. Note that since $\CONINQ$ is a syntactic extension of $\PLIM$, the below also yields a schema in $\CONINQ$ which is equivalent to $\bigvvee_{s\in \QQQ}\chi_s$, whence the disjunction $\vveedot$ and the schema $\bigvveedot_{s\in \QQQ}\chi_s$ defined above are not required to prove the expressive completeness of $\CONINQ$. We discuss our reasons for defining this extension with a disjunction which does not in this case yield an increase in expressive power in Section \ref{c:section:uniform_definability}.

\begin{proposition}
\label{c:prop:global_disjunction_definable}
For any nonempty collection $\{\phi_i\}_{i\in I}$ of flat formulas and any $i\in I$,
    $$\left(\bigwedge_{j\in I\backslash\{i\}}\filleddiamond \lnot\phi_j\right) \to \phi_i\equiv \bigvveeds_{i\in I}\phi_i\equiv \bigvveedotds_{i\in I}\phi_i.$$
\end{proposition}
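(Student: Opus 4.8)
The plan is to establish the two links in the chain separately: the substantive equivalence $A_i \equiv B$ between the implication $A_i := \bigl(\bigwedge_{j \in I\backslash\{i\}} \filleddiamond \lnot \phi_j\bigr) \to \phi_i$ and the global disjunction $B := \bigvveeds_{i\in I}\phi_i$, and the routine equivalence $B \equiv C$ between $B$ and its convex variant $C := \bigvveedotds_{i\in I}\phi_i$. Throughout I would lean on the hypothesis that each $\phi_i$ is flat, hence downward closed, union closed, and possessed of the empty team property, and on the immediate fact that $t \vDash B$ iff $t \vDash \phi_k$ for some $k \in I$ (since $\vvee$ is plain disjunction on teams).

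The crux is a single observation, which I would isolate as a claim: \emph{for any flat $\psi$ and any team $s$, we have $s \vDash \filleddiamond \lnot \psi$ iff $s \nvDash \psi$.} To prove it, I would first unfold $\lnot \psi = \psi \to \bot$: using downward closure of $\psi$, a team $u$ satisfies $\lnot\psi$ exactly when no point of $u$ satisfies $\psi$ (a single point satisfying $\psi$ would furnish a nonempty subteam witnessing $u \nvDash \lnot\psi$). Feeding this into the $\filleddiamond$-clause, $s \vDash \filleddiamond \lnot \psi$ holds iff some nonempty subteam of $s$ --- equivalently, some singleton $\{v\} \subseteq s$ --- satisfies $\lnot \psi$, i.e.\ iff some $v \in s$ has $\{v\} \nvDash \psi$. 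Flatness then rewrites this as $s \nvDash \psi$ (with the boundary case $s = \varnothing$ checked separately, where both sides fail because $\varnothing \vDash \psi$ and $\varnothing$ has no nonempty subteam). I expect this to be the main obstacle, as it is the one place where all three components of flatness are exercised; everything afterwards is bookkeeping.

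Granting the claim, $s \vDash \bigwedge_{j\in I\backslash\{i\}} \filleddiamond \lnot\phi_j$ reduces to ``$s \nvDash \phi_j$ for every $j \neq i$''. Unfolding the $\to$-clause, $t \vDash A_i$ then says that for every $s \subseteq t$, if $s \nvDash \phi_j$ for all $j \neq i$ then $s \vDash \phi_i$. A short case split on whether $s$ satisfies some disjunct shows that this implication holds at a fixed $s$ precisely when $s \vDash \phi_k$ for some $k \in I$: if the witness is $k = i$ the consequent holds; if $k \neq i$ the antecedent fails; and if no $\phi_k$ holds the antecedent holds while the consequent fails. Hence $t \vDash A_i$ iff every subteam of $t$ satisfies some disjunct. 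This is equivalent to $t \vDash \phi_k$ for a single $k$: the forward direction is immediate by taking $s = t$, and the converse is immediate from downward closure, which propagates $t \vDash \phi_k$ to all $s \subseteq t$. Composing with the description of $B$ gives $A_i \equiv B$ for each $i$.

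For the remaining equivalence I would argue directly that $t \vDash C$ iff $s \vDash \phi_k$ for some superteam $s \supseteq t$ and some $k$ (an easy induction on the nesting of $\vveedot$), and then observe that downward closure of $\phi_k$ collapses this to $t \vDash \phi_k$ for some $k$, i.e.\ to $t \vDash B$; alternatively one may invoke the remark preceding the proposition that $\phi \vvee \psi \equiv \phi \vveedot \psi$ for downward-closed $\phi, \psi$, together with the fact that both connectives preserve downward closure, and induct on $|I|$. Either route yields $B \equiv C$, and combining the two equivalences proves the proposition.
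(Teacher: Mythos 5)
Your proof is correct, and it pivots on exactly the same key fact as the paper's proof---for flat $\psi$, $s \vDash \filleddiamond\lnot\psi$ iff $s \nvDash \psi$---but it handles the intuitionistic implication by a genuinely different route. The paper first isolates a small general lemma: $t\vDash \varphi_u \to \varphi_d$ iff $t \nvDash \varphi_u$ or $t \vDash \varphi_d$, whenever the antecedent $\varphi_u$ is upward closed and the consequent $\varphi_d$ is downward closed. Since $\bigwedge_{j\in I\setminus\{i\}}\filleddiamond\lnot\phi_j$ is upward closed and $\phi_i$ is downward closed, the whole computation then takes place at the level of the team $t$ itself: $t$ satisfies the implication iff it fails the antecedent or satisfies $\phi_i$, and the flatness fact (which the paper asserts without proof, and which you prove correctly, including the empty-team boundary case) converts failure of the antecedent into the existence of some $j\neq i$ with $t\vDash\phi_j$. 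You instead unfold the $\to$-clause into its quantification over subteams, do a per-subteam case split showing the inner implication holds at $s$ exactly when $s$ satisfies some disjunct, and only then invoke downward closure to collapse ``every $s\subseteq t$ satisfies some $\phi_k$'' to ``$t$ satisfies some $\phi_k$.'' Both are sound; the paper's route buys a reusable closure-property lemma (materiality of $\to$ under upward-closed antecedent and downward-closed consequent), while yours is more elementary and self-contained. You are also more explicit about the second equivalence, between the $\vvee$- and $\vveedot$-disjunctions: the paper's proof covers only the first equivalence and leaves the second to an earlier remark (that $\vvee$ and $\vveedot$ agree on downward-closed formulas), whereas your superteam-unfolding argument, collapsed by downward closure of the $\phi_k$, verifies it directly and is correct.
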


To prove this, we will use that the intuitionistic implication is material when the antecedent is upward closed and the consequent downward closed:
\begin{lemma}
    Let $\varphi_u$ be upward closed, and $\varphi_d$ downward closed. Then $t\vDash \varphi_u\to \varphi_d$ iff $t\nvDash \varphi_u$ or $t\vDash \varphi_d$.
\end{lemma}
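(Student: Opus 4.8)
The plan is to unfold the semantic clause for the intuitionistic implication and argue the two directions separately, using the closure hypotheses only where needed. Recall that $t\vDash \varphi_u\to\varphi_d$ means: for every $s\subseteq t$, if $s\vDash \varphi_u$ then $s\vDash \varphi_d$. I will prove the biconditional by establishing the two implications, the harder (and only nontrivial) one being the right-to-left direction.

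For the direction from right to left, I would assume $t\nvDash\varphi_u$ or $t\vDash\varphi_d$ and verify the implication clause directly. Split into the two disjuncts. If $t\vDash\varphi_d$, then for any $s\subseteq t$ we have $s\vDash\varphi_d$ by downward closure of $\varphi_d$, so the consequent holds regardless of whether $s\vDash\varphi_u$. If instead $t\nvDash\varphi_u$, I would show the clause holds \emph{vacuously}: were there some $s\subseteq t$ with $s\vDash\varphi_u$, then upward closure of $\varphi_u$ would force $t\vDash\varphi_u$, contradicting the assumption; hence no such $s$ exists and the implication is satisfied trivially. This is the one place both closure hypotheses are genuinely used — $\varphi_d$ downward closed in the first case, $\varphi_u$ upward closed in the second.

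The direction from left to right is almost immediate and requires no closure assumptions: assuming $t\vDash \varphi_u\to\varphi_d$, I would simply instantiate the universally quantified $s$ in the semantic clause with $s:=t$ (using $t\subseteq t$). If $t\vDash\varphi_u$, the clause then yields $t\vDash\varphi_d$ at once; so either $t\nvDash\varphi_u$ or $t\vDash\varphi_d$, as desired.

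I do not anticipate a substantive obstacle here, as the argument is a short semantic unfolding; the only point demanding a little care is handling the vacuous case cleanly in the right-to-left direction, making explicit that the absence of any $\varphi_u$-satisfying subteam (guaranteed by upward closure) is what validates the implication. It is worth recording that this lemma is exactly the tool needed in Proposition \ref{c:prop:global_disjunction_definable}, where the antecedent $\bigwedge_{j}\filleddiamond\lnot\phi_j$ is upward closed (a conjunction of $\filleddiamond$-formulas) and the consequent $\phi_i$ is flat and hence downward closed.
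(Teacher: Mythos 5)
Your proof is correct and takes essentially the same route as the paper's: the right-to-left direction splits into the same two cases (downward closure of $\varphi_d$ when $t\vDash\varphi_d$; upward closure of $\varphi_u$ making the implication hold vacuously when $t\nvDash\varphi_u$), and your left-to-right direction, instantiating $s:=t$, is just the direct form of the paper's contrapositive argument. Nothing to fix.
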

\begin{proof}
    For the left-to-right direction, suppose for contraposition that $t\vDash \varphi_u$ and $t\nvDash \varphi_d$. Then clearly $t\nvDash \varphi_u\to \varphi_d$.

    For the right-to-left direction, if $t\nvDash \varphi_u$, then $s\nvDash \varphi_u$ for all $s\subseteq t$,
    hence vacuously $t\vDash \varphi_u\to \varphi_d$; and if $t\vDash \varphi_d$, then $s\vDash \varphi_d$ for all $s\subseteq t$,
    hence $t\vDash \varphi_u\to \varphi_d$.
\end{proof}
\begin{proof}[Proof of Proposition \ref{c:prop:global_disjunction_definable}]
    Observe that $\bigwedge_{j\in I\backslash\{i\}}\filleddiamond \lnot\phi_j$ is upward closed, and $\phi_i$ is downward closed. Thus, 
\begin{align*}
    t\vDash (\bigwedge_{j\in I\backslash\{i\}}\filleddiamond \lnot\phi_j) \to \phi_i && \text{iff} && t\nvDash (\bigwedge_{j\in I\backslash\{i\}}\filleddiamond \lnot\phi_j) \textit{ or } t\vDash \phi_i\\
    && \text{iff} && \text{there is some } j\in I\backslash\{i\} \text{ s.t. } t\nvDash \filleddiamond \lnot\phi_j \textit{ or } t\vDash \phi_i\\
    && \overset{(a)}{\text{iff}} && \text{there is some } j\in I\backslash\{i\} \text{ s.t. } t\vDash \phi_j \textit{ or } t\vDash \phi_i\\
    && \text{iff} && t\vDash \bigvveeds_{i\in I}\phi_i,
\end{align*}
where we in $(a)$ used that for flat formulas $\varphi_f$: $t\vDash \varphi_f $ iff $t\nvDash \filleddiamond\neg \varphi_f $.
\end{proof}

It remains only to show that we can define formulas $\chi^{\mathbb{D}}_\PPP$ with the property \ref{c:eq:prop_D} in $\CONDEP$. We make use of the schema of $\DEP$ used to prove Theorem \ref{c:theorem:dep_inq_expressive_completeness}. This is defined by first letting, for each finite $\mathsf{X}\subseteq \mathsf{P}$, $\gamma^{\mathsf{X}}_0:=\bot$, $\gamma^{\mathsf{X}}_1:= \bigwedge_{p \in \mathsf{X}}\con{p}$, and for $n\geq 2$, $\gamma^{\mathsf{X}}_n:=\bigvee_n \gamma_1$. Then it is easy to see that for $t\subseteq 2^{\mathsf{X}}$, we have  $t\vDash \gamma^{\mathsf{X}}_n\iff |t|\leq n$, where $|t|$ is the size of $t$. One then defines, for each nonempty $t\subseteq 2^\mathsf{X}$, $\xi^{\mathsf{X}}_t:=\gamma^{\mathsf{X}}_{|t|-1}\vee \chi^{\mathsf{X}}_{2^{\mathsf{X}}\setminus t}$. It can then be shown that for $t\subseteq 2^{\mathsf{X}}$, $t\vDash \xi^{\mathsf{X}}_s \iff s\not\subseteq t$. Finally, the schema is given by
$\bigwedge_{s\in\left\Vert\top \right\Vert_{\mathsf{X}}\setminus \QQQ } \xi^{\mathsf{X}}_s$: for any $\PPP\in \mathbb{DE}_\mathsf{X}$, $\PPP=\left\Vert \bigwedge_{s\in\left\Vert\top \right\Vert_{\mathsf{X}}\setminus \PPP } \xi^{\mathsf{X}}_s\right\Vert_{\mathsf{X}}$. Now, it can be verified that if $\PPP$ is not downward closed, the formula $\bigwedge_{s\in\left\Vert\top \right\Vert_{\mathsf{X}}\setminus \PPP } \xi^{\mathsf{X}}_s$ need not have property \ref{c:eq:prop_D}. Given the properties of the formulas $\xi_s$, we do, however, have:

\begin{lemma}[\cite{yangvaananen2016}] \label{c:lemma:dep_char_formula_prop}
    Let $\mathsf{X}\subseteq \mathsf{P}$ be finite. For $\PPP$ over $\mathsf{X}$ such that $\emptyset\notin \PPP$, and $t\subseteq 2^{\mathsf{X}}$: $$t\vDash\bigwedge_{s\in  \PPP } \xi^{\mathsf{X}}_s\iff s\not\subseteq t \text{ for all }s\in \PPP.$$
\end{lemma}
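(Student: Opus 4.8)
The plan is to reduce the claim to a statement about a single conjunct. By the semantics of $\land$, we have $t\vDash \bigwedge_{s\in\PPP}\xi^{\mathsf{X}}_s$ if and only if $t\vDash \xi^{\mathsf{X}}_s$ for every $s\in\PPP$, so the entire lemma follows once we establish, for each individual $s\in\PPP$, the biconditional $t\vDash \xi^{\mathsf{X}}_s \iff s\not\subseteq t$. This single-conjunct fact is exactly the one already recorded in the prose preceding the lemma, so in principle one may simply invoke it; but since it is the only nontrivial ingredient, I would re-derive it here to keep the argument self-contained.

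To do so, I unfold the definition $\xi^{\mathsf{X}}_s=\gamma^{\mathsf{X}}_{|s|-1}\vee\chi^{\mathsf{X}}_{2^{\mathsf{X}}\setminus s}$ and apply the semantic clause for the split disjunction together with the two established facts $a\vDash\gamma^{\mathsf{X}}_n\iff|a|\leq n$ and $b\vDash\chi^{\mathsf{X}}_{u}\iff b\subseteq u$ (for $a,b\subseteq 2^{\mathsf{X}}$). This shows that $t\vDash\xi^{\mathsf{X}}_s$ holds precisely when $t$ can be written as $t=a\cup b$ with $|a|\leq|s|-1$ and $b\cap s=\varnothing$ (the latter because $b\subseteq 2^{\mathsf{X}}\setminus s$). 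Note that the hypothesis $\varnothing\notin\PPP$ is what guarantees $s\neq\varnothing$, so that $|s|-1\geq 0$ and $\gamma^{\mathsf{X}}_{|s|-1}$, hence $\xi^{\mathsf{X}}_s$, is well-defined; and the finiteness of $\mathsf{X}$ guarantees that $s$ is finite, so the cardinality comparisons below make sense.

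It then remains to verify the two directions of $t\vDash\xi^{\mathsf{X}}_s\iff s\not\subseteq t$. For the contrapositive of the forward direction, suppose $s\subseteq t=a\cup b$; since $b\cap s=\varnothing$, every element of $s$ lies in $a$, so $s\subseteq a$ and thus $|s|\leq|a|\leq|s|-1$, a contradiction, giving $s\not\subseteq t$. For the converse, given $s\not\subseteq t$ I would take the canonical split $a:=t\cap s$ and $b:=t\setminus s$: then $t=a\cup b$ and $b\cap s=\varnothing$ by construction, while $a\subseteq s$ together with the existence of some element of $s$ outside $t$ (hence outside $a$) yields $a\subsetneq s$ and therefore $|a|\leq|s|-1$. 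Combining the two directions with the conjunction step above completes the proof.

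I do not anticipate a genuine obstacle here: the argument is essentially bookkeeping with the two known facts about $\gamma^{\mathsf{X}}_n$ and $\chi^{\mathsf{X}}_t$ and the split-disjunction clause. The only point requiring mild care is the cardinality bound $|a|\leq|s|-1$ in the converse direction, which relies on $s$ being finite (from finiteness of $\mathsf{X}$) and nonempty (from $\varnothing\notin\PPP$)---the same hypothesis that makes the schema well-defined in the first place.
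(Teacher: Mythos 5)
Your proof is correct and matches the paper's approach: the paper itself gives no proof (the lemma is cited from \cite{yangvaananen2016}), but its surrounding prose sets up exactly your decomposition---the conjunction clause reduces everything to the single-conjunct fact $t\vDash \xi^{\mathsf{X}}_s \iff s\not\subseteq t$, which the paper states without verification and which you re-derive correctly. Your derivation (splitting $t$ as $t\cap s$ and $t\setminus s$, with the cardinality bound $|t\cap s|\leq |s|-1$ resting on $s$ being nonempty, via $\emptyset\notin\PPP$, and finite, via finiteness of $\mathsf{X}$) is the standard verification and has no gaps.
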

We use this to construct our formulas $\chi^{\mathbb{D}}_\PPP$ with property \ref{c:eq:prop_D}:
\begin{lemma} \label{c:lemma:CONDEP_DE_formulas}
    Let $\mathsf{X}\subseteq \mathsf{P}$ be finite. For $\PPP\neq \emptyset$ over $\mathsf{X}$, let $$\chi^{\mathsf{X},\mathbb{D}}_\PPP:=\bigwedge_{u\in  \QQQ } \xi'^{\mathsf{X}}_u,$$
    where $\xi'^{\mathsf{X}}_u\in \CONDEP$ is defined by replacing each $\vee$ in $\xi^{\mathsf{X}}_u$ with $\veedot$, and we let $\QQQ:=\{u\subseteq 2^{\mathsf{X}}\mid u\not \subseteq s$ for all $s\in \PPP\}$. Then for any $t\subseteq 2^{\mathsf{X}}$, $t\vDash \chi^{\mathsf{X},\mathbb{D}}_\PPP \iff \exists s\in \PPP:t\subseteq s$.
\end{lemma}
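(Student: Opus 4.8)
The plan is to reduce the claim to the already-established behaviour of the $\DEP$-formulas $\xi^{\mathsf{X}}_u$ and then settle a purely set-theoretic equivalence. First I would observe that $\xi'^{\mathsf{X}}_u\equiv \xi^{\mathsf{X}}_u$, so that the truth condition $t\vDash \xi^{\mathsf{X}}_u\iff u\not\subseteq t$ carries over verbatim to the primed formulas. Granting this, $\chi^{\mathsf{X},\mathbb{D}}_\PPP=\bigwedge_{u\in \QQQ}\xi'^{\mathsf{X}}_u\equiv\bigwedge_{u\in \QQQ}\xi^{\mathsf{X}}_u$, and since $\emptyset\notin \QQQ$ (because $\emptyset\subseteq s$ for every $s$), Lemma \ref{c:lemma:dep_char_formula_prop} applies with $\QQQ$ in place of $\PPP$ and yields $t\vDash \chi^{\mathsf{X},\mathbb{D}}_\PPP\iff u\not\subseteq t$ for every $u\in \QQQ$. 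It then remains only to show
$$\bigl[\,u\not\subseteq t\text{ for all }u\in \QQQ\,\bigr]\iff \exists s\in \PPP: t\subseteq s.$$

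For the equivalence $\xi'^{\mathsf{X}}_u\equiv \xi^{\mathsf{X}}_u$, the key general fact is that $\vee$ and $\veedot$ agree on downward-closed arguments: if $\varphi,\psi$ are downward closed and $t\subseteq s\cup w$ with $s\vDash \varphi$ and $w\vDash \psi$, then $s\cap t\vDash \varphi$ and $w\cap t\vDash \psi$ by downward closure while $t=(s\cap t)\cup(w\cap t)$, so $t\vDash \varphi\veedot\psi$ implies $t\vDash \varphi\vee\psi$; the converse inclusion is immediate from the clauses. Now every disjunction occurring in $\xi^{\mathsf{X}}_u=\gamma^{\mathsf{X}}_{|u|-1}\vee \chi^{\mathsf{X}}_{2^{\mathsf{X}}\setminus u}$—both the top-level one and those hidden inside $\gamma^{\mathsf{X}}_{|u|-1}=\bigvee_{|u|-1}\gamma^{\mathsf{X}}_1$—combines downward-closed arguments: the constancy atoms and hence $\gamma^{\mathsf{X}}_1$ are downward closed, a $\vee$ of downward-closed formulas is again downward closed, and $\chi^{\mathsf{X}}_{2^{\mathsf{X}}\setminus u}$ is classical hence downward closed (Facts \ref{c:fact:classical_correspondence} and \ref{c:fact:closure_props}). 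Replacing each $\vee$ by $\veedot$ from the inside out therefore preserves both downward closure and truth at every node, giving $\xi'^{\mathsf{X}}_u\equiv \xi^{\mathsf{X}}_u$.

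For the set-theoretic equivalence, I argue both directions directly. For $(\Leftarrow)$, suppose $t\subseteq s$ for some $s\in \PPP$ and take any $u\in \QQQ$; were $u\subseteq t$, transitivity would give $u\subseteq s$, contradicting $u\in \QQQ$, so $u\not\subseteq t$. For $(\Rightarrow)$, I use contraposition: if $t\not\subseteq s$ for every $s\in \PPP$, then $t$ itself satisfies the defining condition of $\QQQ$, i.e.\ $t\in \QQQ$; choosing the witness $u:=t$ then contradicts ``$u\not\subseteq t$ for all $u\in \QQQ$'', since $t\subseteq t$. This establishes the displayed equivalence and hence the lemma.

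I expect the substantive point to be precisely this diagonal choice $u:=t$, which is what makes the index set $\QQQ=\{u\subseteq 2^{\mathsf{X}}\mid u\not\subseteq s\text{ for all }s\in \PPP\}$ exactly the right one—it guarantees that failing to be covered by some member of $\PPP$ is itself witnessed inside $\QQQ$. The reduction $\xi'^{\mathsf{X}}_u\equiv \xi^{\mathsf{X}}_u$ is conceptually routine, but it does require the small observation that $\vee$ and $\veedot$ coincide on downward-closed inputs together with the verification that every component of $\xi^{\mathsf{X}}_u$ is downward closed; this is the only place where a little care (rather than difficulty) is needed.
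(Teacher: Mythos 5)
Your proof is correct and follows essentially the same route as the paper's: both reduce the claim to Lemma \ref{c:lemma:dep_char_formula_prop} via the equivalence $\xi'^{\mathsf{X}}_u\equiv\xi^{\mathsf{X}}_u$ (noting $\vee$ and $\veedot$ agree on downward-closed formulas and $\emptyset\notin\QQQ$), and then settle the remaining set-theoretic equivalence using the same diagonal witness $u:=t$ in one direction and transitivity $u\subseteq t\subseteq s$ in the other. The only difference is that you spell out the $\vee$/$\veedot$ agreement and the downward closure of the components of $\xi^{\mathsf{X}}_u$ in detail, which the paper compresses into a one-line remark.
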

\begin{proof}
Note that for all downward-closed formulas $\phi$ and $\psi$, $\phi\vee\psi\equiv \phi\veedot \psi$, and that, since $\PPP\neq \emptyset$, we have $\emptyset\notin \QQQ$. Therefore, by Lemma \ref{c:lemma:dep_char_formula_prop}, we have that $t\vDash\chi^{\mathbb{D}}_\PPP\iff$  [$u\not\subseteq t$ for all $u\in \QQQ$]. We show that [$u\not\subseteq t$ for all $u\in \QQQ$] $\iff \exists s\in \PPP:t\subseteq s$. 

    $\Longrightarrow$: For contraposition, assume that there is no $s\in \PPP$ such that $t\subseteq s$. Then $t\in \QQQ$, whence there is $u\in \QQQ$ such that $u\subseteq t$, namely $u=t$.

    $\Longleftarrow$: For contraposition, assume that there is some $u\in \QQQ$ such that $u\subseteq t$, and let $s\in \PPP$ be arbitrary. Then $t\not\subseteq s$, because otherwise $u\subseteq t\subseteq s$ would contradict $u\in\QQQ$.
\end{proof}

And we are done:

\begin{theorem}\label{c:theorem:convex_expressive_completeness}
    Each of $\CONDEP$, $\CONINQ$, and $\PLIM$ is expressively complete for the class of all convex properties: $$\left\Vert \CONDEP\right\Vert=\left\Vert \CONINQ\right\Vert=\left\Vert \PLIM\right \Vert=\mathbb{C}.$$
\end{theorem}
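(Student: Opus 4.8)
The plan is to prove the two inclusions separately for each $\LOGIC\in\{\CONDEP,\CONINQ,\PLIM\}$. The inclusion $\left\Vert\LOGIC\right\Vert\subseteq\mathbb{C}$ is already in hand from Proposition \ref{c:prop:convex_logics_convex}, so the entire burden falls on the converse: every convex property over a finite domain must be shown definable in each of the three logics. I would fix a finite $\mathsf{X}\subseteq\mathsf{P}$ and a convex property $\PPP$ over $\mathsf{X}$ and treat two cases.

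If $\PPP=\emptyset$, then $\PPP$ is expressed by $\Bot$ in every one of the three logics, since $t\vDash\Bot$ never holds. If $\PPP\neq\emptyset$, I would apply the factorization of Lemma \ref{c:lemma:convex_char_formula}, which reduces the task to exhibiting, inside each logic, a formula $\chi^{\mathsf{X},\mathbb{D}}_\PPP$ with $t\vDash\chi^{\mathsf{X},\mathbb{D}}_\PPP\iff\exists s\in\PPP:t\subseteq s$ and a formula $\chi^{\mathsf{X},\mathbb{U}}_\PPP$ with $t\vDash\chi^{\mathsf{X},\mathbb{U}}_\PPP\iff\exists s\in\PPP:t\supseteq s$; their conjunction then defines exactly $\PPP$, the convexity of $\PPP$ being precisely what is needed to close the gap between the lower and upper bounds.

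The upward half is uniform across the three logics, since the formula of Lemma \ref{c:lemma:upward_char_formula} uses only $\filleddiamond$, $\curlyvee$, and finite conjunction, all available in each of $\CONDEP$, $\CONINQ$, and $\PLIM$; so a single $\chi^{\mathsf{X},\mathbb{U}}_\PPP$ works throughout. For the downward half the three logics diverge, and here I would appeal to the logic-specific constructions already established: Lemma \ref{c:lemma:CONDEP_DE_formulas} yields $\chi^{\mathsf{X},\mathbb{D}}_\PPP$ in $\CONDEP$ (replacing $\vee$ by $\veedot$ in the dependence-logic schema so as not to break convexity), Lemma \ref{c:lemma:coninq_DE_formulas} yields it in $\CONINQ$, and Proposition \ref{c:prop:global_disjunction_definable}, applied to the characteristic formulas $\chi^{\mathsf{X}}_s$ (which are classical, hence flat by Fact \ref{c:fact:classical_correspondence}), yields it in $\PLIM$. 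Assembling the two halves gives, for each logic, a formula $\chi^{\mathsf{X},\mathbb{D}}_\PPP\land\chi^{\mathsf{X},\mathbb{U}}_\PPP$ defining $\PPP$, completing the $\supseteq$ inclusion.

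Because every genuinely hard step has been quarantined in the preceding lemmas, the theorem itself is an assembly argument, and I expect no serious obstacle in it. The one point that merits attention is conceptual rather than computational: one must check that the factorization through $\chi^{\mathsf{X},\mathbb{D}}_\PPP$ and $\chi^{\mathsf{X},\mathbb{U}}_\PPP$ applies to an arbitrary nonempty convex $\PPP$, and not only to the downward- or upward-closed properties for which such characteristic-formula schemas were originally devised. This is exactly what property \ref{c:eq:prop_D} and its upward dual secure, as neither presupposes any closure of $\PPP$; once the constructions meet these specifications, Lemma \ref{c:lemma:convex_char_formula} delivers the result from convexity alone.
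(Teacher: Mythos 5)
Your proposal is correct and takes essentially the same approach as the paper's own proof: the $\subseteq$ direction via Proposition \ref{c:prop:convex_logics_convex}, the empty property handled by $\Bot$, and nonempty convex properties handled by the factorization of Lemma \ref{c:lemma:convex_char_formula} with the upward half supplied uniformly by Lemma \ref{c:lemma:upward_char_formula} and the downward half supplied per logic by Lemmas \ref{c:lemma:CONDEP_DE_formulas}, \ref{c:lemma:coninq_DE_formulas}, and Proposition \ref{c:prop:global_disjunction_definable}. The paper's proof is simply a terser citation of exactly this assembly.
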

\begin{proof}
The direction $\left\Vert\LOGIC\right\Vert\subseteq \mathbb{C}$ is by Proposition \ref{c:prop:convex_logics_convex} for each relevant $\LOGIC$. For the direction $\mathbb{C}\subseteq \left\Vert\LOGIC\right\Vert$, let $\PPP\in \mathbb{C}_\mathsf{X}$. If $\PPP=\emptyset$, then $\PPP=\left\Vert\Bot\right\Vert_\mathsf{X}\in \left\Vert \LOGIC \right\Vert_{\mathsf{X}}$. If $\PPP\neq \emptyset$, the result follows by Lemmas/Propositions \ref{c:lemma:convex_char_formula}, \ref{c:lemma:upward_char_formula}, \ref{c:lemma:coninq_DE_formulas}, \ref{c:prop:global_disjunction_definable}, and \ref{c:lemma:CONDEP_DE_formulas}.
\end{proof}

\subsection{Modal Properties}
\label{c:section:convex_modal}

We define two distinct modal extensions of each of our classical bases. In each case, one extension uses what we will call the \emph{flat} modalities $\Diamond$ and $\Box$, also used in early versions of modal inquisitive logic \cite{ciardelli2016} and in Aloni's Bilateral State-based Modal Logic \cite{aloni2022}; the other the \emph{global modalities} $\diamonddiamond$ and $\boxbox$, used in modal dependence logic and other modal logics of dependence \cite{vaananen2008,hella2015,kontinen20142}. We call each of these new classical bases $\ML_{i,j}$, where $i\in \{\vee, \veedot, \to\}$ and $j\in \{\Diamond, \diamonddiamond\}$, \emph{classical modal logic}; they are defined in the obvious way. The extensions $\ML_{\veedot,j}(\con{\cdot},\filleddiamond)$, $\ML_{\to,j}(\vveedot,\filleddiamond)$ and $\ML_{\to,j}(\filleddiamond)$, where $j\in \{\Diamond, \diamonddiamond\}$ are likewise defined in the obvious way, except we define dependence atoms in $\ML_{\veedot,\Diamond}(\con{\cdot},\filleddiamond)$ as follows:
$$\dep{\alpha_1,\ldots,\alpha_n}{\alpha},$$
where $\alpha_1,\ldots,\alpha_n,\alpha\in \ML_{\veedot,\Diamond}$ (and similarly for $\ML_{\veedot,\diamonddiamond}(\con{\cdot},\filleddiamond)$). That is, we now allow all classical formulas to appear in dependence atoms. (This leads to an increase in expressive power; for instance, it can be shown that the analogue of Theorem \ref{c:theorem:dep_inq_expressive_completeness} holds for $\ML_{\vee,\diamonddiamond}(\con{\cdot})$ with these extended dependence atoms, but not for the variant which only allows propositional variables in dependence atoms \cite{ebbing,hella2014}.)

In this section, we show modal analogues of our expressive completeness theorems for the extensions of our convex logics with the flat modalities, and we show that no such analogues can be obtained for the extensions with the global modalities.

Modal team logics are interpreted on teams in standard Kripke models. 

\begin{definition} \label{c:definition:models}
A \emph{(Kripke) model} (over $\mathsf{X}\subseteq \mathsf{P}$) is a triple $M = (W,R,V)$, where
\begin{itemize}
    \item[--] $W$ is a nonempty set, whose elements are called \emph{(possible) worlds};
    \item[--] $R \subseteq W \times W$ is a binary relation, called the \emph{accessibility relation};
    \item[--] 
    $V: \mathsf{X} \to \wp(W)$
    is a function, called the \emph{valuation}.
\end{itemize}  

We call a subset $t\subseteq W$ of $W$ a (modal) \emph{team} on $M$.
\end{definition} 
For any world $w$ in $M$, define, as usual, $R[w]:=\{v \in W\mid wRv\}$. Similarly, for any team $t$ on $M$, define $R[t]:=\bigcup_{w\in t}R[w]$ and $R^{-1}[t]:=\{v \in W\mid \exists w\in t:vRw\}$. We write $tRs$ and say that $s$ is a \emph{successor team} of $t$ if $s\subseteq R[t]$ and $t\subseteq R^{-1}[s]$.

The modal semantics for most connectives are the obvious analogues of their propositional semantics (for example, a team $t$ on $M$ makes $p$ true---written $M,t\vDash p$---just in case $t\subseteq V(p)$). We only explicitly give the semantics for the dependence atoms and the modalities.
\begin{align*}
    &M,t\vDash \dep{\alpha_1,\ldots,\alpha_n}{\alpha} &&:\iff &&\textit{$\forall v,w\in t:[\forall 1\leq i\leq n:\{v\}\vDash \alpha_i\iff \{w\}\vDash \alpha_i  ]\implies$}\\
    &&&&&\textit{$[\{v\}\vDash \alpha\iff \{w\}\vDash \alpha]$}.\\
    &M,t\vDash \Diamond \phi &&:\iff &&\forall w\in t: \exists s\subseteq R[w]: s\neq \emptyset \text{ and }M,s\vDash \phi\\
    &M,t\vDash \Box \phi &&:\iff &&\forall w\in t: M,R[s] \vDash \phi\\
    &M,t\vDash \diamonddiamond \phi &&:\iff &&\exists s\subseteq W: tRs  \text{ and }M,s\vDash \phi\\
    &M,t\vDash \boxbox \phi &&:\iff &&M,R[t]\vDash \phi
\end{align*}
The semantics of the local modalities $\Diamond$ and $\Box$ are defined by stating that a condition that applies to worlds must hold for all worlds in a team; this clearly makes all formulas $\Diamond\phi$ and $\Box 
\phi$ flat. The global modalities $\diamonddiamond$ and $\boxbox$, on the other hand, make use of conditions which apply globally to teams.

We define the modal analogues of the closure properties (Definition \ref{c:definition:closure_props}) in the obvious way. It is then easy to see that the modal analogues of Facts \ref{c:fact:closure_props} and \ref{c:fact:classical_correspondence} (for each of our new classical bases) hold. Given the flatness of $\Diamond\phi$ and $\Box 
\phi$, an easy extension of Proposition \ref{c:prop:convex_logics_convex} yields:

\begin{proposition} \label{c:prop:modal_flat_convex_logics_convex}
    Formulas of $\LCONDEP$, $\LCONINQ$ and $\LPLIM$ are convex.
\end{proposition}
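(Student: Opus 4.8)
The plan is to prove Proposition \ref{c:prop:modal_flat_convex_logics_convex} by induction on the structure of formulas, exactly mirroring the proof of Proposition \ref{c:prop:convex_logics_convex} and invoking the latter where the connectives coincide. The logics $\LCONDEP$, $\LCONINQ$, and $\LPLIM$ extend the propositional convex logics with the flat modalities $\Diamond$ and $\Box$ (and replace propositional atoms by their modal analogues), so the only genuinely new inductive cases are those for $\Diamond\phi$ and $\Box\phi$; every other connective ($\land$, $\veedot$, $\to$, $\vveedot$, $\filleddiamond$, and the dependence atoms $\dep{\alpha_1,\ldots,\alpha_n}{\alpha}$) is handled precisely as in the propositional setting. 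For the shared cases I would simply note that the arguments of Proposition \ref{c:prop:convex_logics_convex} go through verbatim, since the relevant semantic clauses are the obvious team-semantic analogues.

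For the new cases, the key observation is the one already flagged in the excerpt: both $\Diamond\phi$ and $\Box\phi$ are \emph{flat}, because their semantic clauses assert that a world-level condition holds for every world in the team. Flatness immediately gives convexity. Concretely, I would argue as follows. Suppose $M,t\vDash \Diamond\phi$ and $M,s\vDash \Diamond\phi$ with $s\subseteq u\subseteq t$. By the semantic clause, $M,t\vDash\Diamond\phi$ means that for every $w\in t$ there is a nonempty $s_w\subseteq R[w]$ with $M,s_w\vDash\phi$. Since $u\subseteq t$, this condition holds in particular for every $w\in u$, so $M,u\vDash\Diamond\phi$. (Here only $u\subseteq t$ is used; the hypothesis $s\vDash\Diamond\phi$ is not even needed, which is precisely what flatness buys us.) The case of $\Box\phi$ is entirely analogous: $M,t\vDash\Box\phi$ means $M,R[w]\vDash\phi$ for every $w\in t$, and again restricting to $w\in u\subseteq t$ yields $M,u\vDash\Box\phi$.

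Since this is a routine extension, I do not anticipate a genuine obstacle, but the one point deserving care is verifying that the modal analogues of Facts \ref{c:fact:closure_props} and \ref{c:fact:classical_correspondence} hold, so that the classical modal formulas appearing inside dependence atoms and as the arguments of $\lnot$ and $\to$ behave flatly, exactly as their propositional counterparts do; the excerpt already asserts these analogues hold, so I would invoke them directly. I would therefore present the proof compactly: state that the argument proceeds by induction as in Proposition \ref{c:prop:convex_logics_convex}, remark that the classical-basis and non-modal connective cases are identical, and then dispatch the $\Diamond$ and $\Box$ cases by noting they are flat (hence convex) via the observation that their truth in a team depends only on the world-level behaviour of each of its members, which is inherited by every subteam $u$ with $u\subseteq t$.
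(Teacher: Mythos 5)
Your proposal is correct and follows essentially the same route as the paper: the paper likewise treats this as an easy extension of Proposition \ref{c:prop:convex_logics_convex}, observing that the clauses for $\Diamond$ and $\Box$ impose a world-level condition on every member of the team, so $\Diamond\phi$ and $\Box\phi$ are flat (in particular downward closed, hence convex), while all remaining cases carry over verbatim via the modal analogues of Facts \ref{c:fact:closure_props} and \ref{c:fact:classical_correspondence}. Your explicit argument for the $\Diamond$ and $\Box$ cases is just the downward-closure half of that flatness observation, which is all that convexity requires.
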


The extensions with the global modalities are, however, not convex, as the fact below shows. The fact further shows that, as with $\vee$ and $\vvee$, the global diamond $\diamonddiamond$ does not preserve convexity in a convex setting and so no logic with $\diamonddiamond$ can be complete for the class of all convex modal properties.

\begin{fact}\label{c:fact:global_disjunction_convexity_break} There are (i) $\psi\in \GPLIM$ and (ii) $\chi\in \GCONDEP$ that are not convex. There is (iii) a convex $\phi$ such that $\diamonddiamond \phi$ is not convex.
   \end{fact}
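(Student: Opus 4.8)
The plan is to exhibit explicit counterexamples for each of the three parts, mirroring the structure of Fact \ref{c:fact:disjunctions_convexity_break} but now in the modal setting and using the global modalities. In each case I would produce a single formula together with three nested teams $s\subseteq u \subseteq t$ (all on one fixed model $M$) such that the formula holds at $s$ and $t$ but fails at the intermediate team $u$, directly contradicting convexity.

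For part (iii), the cleanest starting point is to transport the propositional intuition behind the failure of convexity for $\vvee$ into the modal world via $\diamonddiamond$. Since $\diamonddiamond\phi$ asserts the existence of a \emph{single} successor team $s$ (with $tRs$) satisfying $\phi$, and the successor-team relation requires $s\subseteq R[t]$ and $t\subseteq R^{-1}[s]$, the witnessing successor team is forced to grow and shrink in a way that is not monotone as $t$ grows. The plan is to take a convex $\phi$ (for instance a disjunction-like or flat formula whose truth at a successor team depends non-monotonically on which worlds are available) and build a model with a root-level team $t$ whose successor teams realize $\phi$ exactly at the two extreme teams $s$ and $t$ but not at the middle one $u$; here the key point is that enlarging the base team changes the set $R[t]$ of available successors and the constraint $t\subseteq R^{-1}[s]$, so the witness available for $s$ and for $t$ need not be available for $u$. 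First I would fix $\phi$ and then design $R$ so that the successor teams forced by $s$, $u$, $t$ differ in exactly the way needed.

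Parts (i) and (ii) are then obtained by internalizing this within the logics $\GPLIM = \ML_{\to,\diamonddiamond}(\filleddiamond)$ and $\GCONDEP = \ML_{\veedot,\diamonddiamond}(\con{\cdot},\filleddiamond)$ respectively. For (i) I would write $\psi$ as a formula of $\GPLIM$ that expresses, via $\diamonddiamond$ together with $\filleddiamond$ and the flat modalities, the same non-convex successor-team behavior; the natural candidate is something of the shape $\diamonddiamond\chi'$ where $\chi'$ is a convex $\PLIM$/flat formula, combined if necessary with a propositional guard. For (ii), the analogue in $\GCONDEP$ can be built using $\diamonddiamond$ and a dependence or constancy atom (now permitted to range over classical modal formulas), choosing the atom so that it is satisfied at the extreme successor teams but not the intermediate one. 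In both cases I would then display the three nested teams and check satisfaction by unwinding the semantic clauses for $\diamonddiamond$, $\filleddiamond$, $\to$ (or $\veedot$), and the atoms.

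The main obstacle I expect is bookkeeping rather than conceptual: I must simultaneously satisfy the successor-team constraints $s\subseteq R[t]$ and $t\subseteq R^{-1}[s]$ at each of the three base teams while ensuring that the inner convex formula $\phi$ (or $\chi'$) is verified at the successor teams for the two endpoints but refuted for the midpoint, and that the base teams themselves are genuinely nested. Designing one model $M$ that works uniformly for all three parts—or three small models—requires care that the accessibility relation does not accidentally supply an unwanted witnessing successor team for the middle team $u$. I would therefore keep the models as small as possible (a two-level frame with a handful of worlds), verify the three containments and the satisfaction facts by direct inspection, and only afterward check that the same frame can carry the $\GPLIM$ and $\GCONDEP$ formulas of parts (i) and (ii).
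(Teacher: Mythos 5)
Your high-level route is the paper's: find one convex inner formula expressible both in $\GPLIM$ (via $\to$ and $\filleddiamond$) and in $\GCONDEP$ (via a constancy atom), then exhibit one model with nested teams $s\subseteq u\subseteq t$ where the $\diamonddiamond$ of that formula holds at $s$ and $t$ but fails at $u$, so that a single counterexample settles (i)--(iii). But there is a concrete gap in how you propose to choose the inner formula. You suggest, for both (iii) and (i), that $\phi$ (respectively $\chi'$) could be a \emph{flat} formula. That can never work: flat formulas are downward closed and union closed, hence convex and union closed, and $\diamonddiamond$ preserves convexity whenever its argument is convex and union closed---this is exactly part (ii) of the proof of Proposition \ref{c:prop:MLNE_convex_union_closed} later in the paper, and that argument is purely semantic, independent of the ambient logic. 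So $\diamonddiamond\chi'$ is convex for every flat $\chi'$, no matter how the accessibility relation is designed; the non-monotonicity of the successor-team relation that you appeal to is not by itself enough. The operative property your proposal never identifies is that the inner formula must be convex but must \emph{fail union closure}. This is why the paper takes $\phi:=((\filleddiamond p\land\filleddiamond\lnot p)\to\bot)\land\filleddiamond r\equiv\con{p}\land\filleddiamond r$: the constancy component destroys union closure, while convexity survives because $\phi$ is a conjunction of a downward-closed formula with an upward-closed one. Your sketch for (ii) (a constancy or dependence atom under $\diamonddiamond$) is on the right track precisely because such atoms are not union closed, but you present it as one option among several, and the candidates you name for (i) and (iii) would provably fail.

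Second, even granting a correct inner formula, the proposal stops at a promissory note: ``design $R$ so that the successor teams forced by $s$, $u$, $t$ differ in exactly the way needed.'' Since this fact \emph{is} its counterexample, nothing is proved until the model and teams are displayed and satisfaction is checked. For comparison, the paper's model has three reflexive worlds $w_{pr}$, $w_{p\overline{r}}$, $w_{\overline{p}r}$ with one extra edge from $w_{\overline{p}r}$ to $w_{p\overline{r}}$; then $\{w_{\overline{p}r}\}$ and $\{w_{pr},w_{p\overline{r}},w_{\overline{p}r}\}$ satisfy $\diamonddiamond\phi$ (with witnessing successor teams $\{w_{\overline{p}r}\}$ and $\{w_{pr},w_{p\overline{r}}\}$ respectively), while the intermediate team $\{w_{p\overline{r}},w_{\overline{p}r}\}$ has only $\{w_{p\overline{r}}\}$ and $\{w_{p\overline{r}},w_{\overline{p}r}\}$ as successor teams, and neither satisfies $\con{p}\land\filleddiamond r$ (the first lacks an $r$-world, the second has $p$ non-constant). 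Until your write-up contains such a verification with a non-union-closed inner formula, the statement remains unestablished.
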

   \begin{proof}
    Consider the formula $\phi:= ((\filleddiamond p \land \filleddiamond \lnot p)\to \bot) \land \filleddiamond r$ and note that $\phi\equiv \con{p}\land \filleddiamond r$. It is easy to see that $\phi$ is convex; to show (i--iii) it therefore suffices to show that $\diamonddiamond \phi$ is not convex. Consider the following model $M=(W,R,V)$ (with $R$ represented using arrows):
   \begin{center}
   \begin{tikzpicture}[modalx]
   \node[world] (pr) {$w_{pr}$};
   \node[world] (pnor) [right=of pr] {$w_{p\overline{r}}$};
   \node[world] (nopr) [right=of pnor]  {$w_{\overline{p}r}$};
   \path[->] (nopr) edge (pnor);
   \path[->] (pr) edge[reflexive] (pr);
   \path[->] (nopr) edge[reflexive] (nopr);
   \path[->] (pnor) edge[reflexive] (pnor);
   \end{tikzpicture}
   \end{center}
   We have $M,\{w_{\overline{p}r}\}\vDash \phi$ and $\{w_{\overline{p}r}\}R\{w_{\overline{p}r}\}$, whence $M,\{w_{\overline{p}r}\}\vDash \diamonddiamond\phi$. We also have $M,\{w_{pr}, w_{p\overline{r}}\}\vDash \phi$ and $\{w_{pr}, w_{p\overline{r}}, w_{\overline{p}r}\}R\{w_{pr}, w_{p\overline{r}}\}$, whence $M,\{w_{pr}, w_{p\overline{r}}, w_{\overline{p}r}\} \vDash \diamonddiamond \phi$. But we have $M,\{ w_{p\overline{r}}, w_{\overline{p}r}\}\nvDash \diamonddiamond \phi$. For the only $t\subseteq W$ such that $\{ w_{p\overline{r}}, w_{\overline{p}r}\}Rt$ are $\{ w_{p\overline{r}}, w_{\overline{p}r}\}$ and $\{w_{p\overline{r}}\}$, and for neither of these do we have $t\vDash\phi$.
   \end{proof}

We move on to the modal analogues of Theorem \ref{c:theorem:convex_expressive_completeness} for the logics  $\LCONDEP$, $\LCONINQ$ and $\LPLIM$. We omit the details: given Proposition \ref{c:prop:modal_flat_convex_logics_convex}, the proofs of these results are almost completely analogous to that of Theorem \ref{c:theorem:convex_expressive_completeness} (with one departure, which we comment on below). One can define natural analogues of team properties, expressive completeness, and the formulas $\chi^{\mathsf{X}}_v$ and $\chi^{\mathsf{X}}_t$ in the modal setting (see \cite{hella2014,kontinen2014} for details). One can also define a notion of bisimulation appropriate for modal team semantics (see \cite{hella2014,kontinen2014}), and it is easy to show that each of our logics is invariant under this notion of bisimulation (cf. the bisimulation invariance results for similar logics in \cite{hella2015,aloni2023,anttila2023axiomatizingmodalinclusionlogic}). The one departure which must be made from the strategy followed in Section \ref{c:section:convex_prop} is in defining the modal analogues of the formulas $\gamma^{\mathsf{X}}_n$ used in the proof of Lemma \ref{c:lemma:CONDEP_DE_formulas}. These require a more complicated definition; see \cite{hella2014} for details. The modal analogue of the proof of Theorem \ref{c:theorem:convex_expressive_completeness} yields:

\begin{theorem}\label{c:theorem:convex_modal_expressive_completeness}
    Each of $\LCONDEP$, $\LCONINQ$, and $\LPLIM$ is expressively complete for the class of all convex modal properties invariant under bounded bisimulation.
\end{theorem}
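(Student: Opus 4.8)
The plan is to mirror, step by step, the proof of Theorem \ref{c:theorem:convex_expressive_completeness}, replacing propositional teams over $\mathsf{X}$ with modal teams on Kripke models and the formulas $\chi^{\mathsf{X}}_v,\chi^{\mathsf{X}}_t$ with their bounded-bisimulation analogues from \cite{hella2014,kontinen2014}. The inclusion $\left\Vert\LOGIC\right\Vert\subseteq \mathbb{C}$ (restricted to bisimulation-invariant properties) is handed to us by Proposition \ref{c:prop:modal_flat_convex_logics_convex} together with the bisimulation-invariance remark preceding the theorem: every formula of $\LCONDEP$/$\LCONINQ$/$\LPLIM$ is convex, and each is invariant under $n$-bisimulation for $n$ the modal depth of the formula. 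So the substantive direction is again $\mathbb{C}\cap(\text{bisim-invariant})\subseteq \left\Vert\LOGIC\right\Vert$, and the whole architecture of Lemmas \ref{c:lemma:convex_char_formula}--\ref{c:lemma:CONDEP_DE_formulas} carries over essentially verbatim once the right modal characteristic formulas are in place.

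First I would fix a finite $\mathsf{X}$ and a modal depth $n$, and recall from \cite{hella2014,kontinen2014} the finitely many $n$-bisimulation-invariant ``types'': the formulas $\chi^{\mathsf{X}}_v$ characterizing the $n$-bisimulation class of a pointed model (up to depth $n$) and $\chi^{\mathsf{X}}_t$ characterizing a team up to $n$-bisimulation, satisfying $s\vDash \chi^{\mathsf{X}}_t \iff s\subseteq^n t$ for the appropriate refinement relation. Because only finitely many such types exist at each depth, every bisimulation-invariant modal property over $\mathsf{X}$ of depth $\le n$ behaves like a finite property, exactly as the footnote in Section \ref{c:section:convex_prop} arranges in the propositional case. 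With these in hand, the reduction of convex properties to an upward-closed part and a downward-closed part via Lemma \ref{c:lemma:convex_char_formula} is formal and transfers unchanged. The upward formulas $\chi^{\mathbb{U}}_\PPP$ of Lemma \ref{c:lemma:upward_char_formula} are built only from $\filleddiamond$, $\curlyvee$, and the $\chi^{\mathsf{X}}_v$, all available in each modal logic, so that lemma transfers with only notational changes; likewise the downward formulas for $\LCONINQ$ (Lemma \ref{c:lemma:coninq_DE_formulas}) and for $\LPLIM$ (Proposition \ref{c:prop:global_disjunction_definable}, whose proof uses only flatness of the classical disjuncts and the duality $t\vDash\varphi_f \iff t\nvDash\filleddiamond\lnot\varphi_f$) go through for the flat modal logics, since the modal $\Diamond\phi,\Box\phi$ are flat by the semantics.

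The single genuine departure, flagged already in the text, is the construction of the downward formulas $\chi^{\mathbb{D}}_\PPP$ for $\LCONDEP$, which rests on the cardinality formulas $\gamma^{\mathsf{X}}_n$ of Lemma \ref{c:lemma:CONDEP_DE_formulas}. In the propositional case $\gamma^{\mathsf{X}}_n=\bigvee_n(\bigwedge_{p\in\mathsf{X}}\con{p})$ works because the constancy atoms partition $2^{\mathsf{X}}$ into singletons; modally, worlds are only distinguished up to $n$-bisimulation, so one must instead use the more elaborate depth-$n$ analogues from \cite{hella2014}, which count teams up to the number of distinct $n$-types present rather than by raw cardinality. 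I would cite that construction for the modal $\gamma$-formulas and the corresponding analogue of Lemma \ref{c:lemma:dep_char_formula_prop}, then define $\xi'^{\mathsf{X}}_u$ exactly as in Lemma \ref{c:lemma:CONDEP_DE_formulas} by replacing each $\vee$ with $\veedot$ (legitimate since the components are downward closed, where $\vee$ and $\veedot$ coincide), and conclude property \ref{c:eq:prop_D} by the same two contrapositive arguments. This is the step I expect to be the main obstacle, not because the logic differs but because the bookkeeping of $n$-types replaces plain cardinality and must be set up carefully; the rest is a transcription. Assembling the pieces exactly as in the proof of Theorem \ref{c:theorem:convex_expressive_completeness}---empty property via $\Bot$, nonempty via Lemma \ref{c:lemma:convex_char_formula} applied to the modal $\chi^{\mathbb{D}}_\PPP$ and $\chi^{\mathbb{U}}_\PPP$---then yields the result for each of the three flat modal logics.
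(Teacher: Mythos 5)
Your proposal is correct and follows essentially the same route as the paper: the paper likewise derives the $\subseteq$ direction from Proposition \ref{c:prop:modal_flat_convex_logics_convex} plus bisimulation invariance, transfers the architecture of Lemmas \ref{c:lemma:convex_char_formula}--\ref{c:lemma:CONDEP_DE_formulas} using the modal characteristic formulas of \cite{hella2014,kontinen2014}, and flags exactly the departure you identify---the modal analogues of the $\gamma^{\mathsf{X}}_n$ formulas for the $\LCONDEP$ case, outsourced to \cite{hella2014}. Your write-up is in fact more explicit than the paper's (which omits the details), and your diagnosis of why the cardinality formulas are the one non-routine step matches the paper's own comment.
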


\section{Convex and Union-closed Properties}
\label{c:section:union-closed_convex}

In Section \ref{c:section:union-closed_convex_prop}, we show that the logic $\PLNE$ \cite{yang2017} is expressively complete with respect to the class of all convex and union-closed propositional properties. In Section \ref{c:section:union-closed_convex_modal}, we show modal analogues of this result for, notably, Aloni's Bilateral State-based Modal ($\BSML$) \cite{aloni2022}, as well as two distinct modal extensions of $\PLNE$.

\subsection{Propositional Properties}
\label{c:section:union-closed_convex_prop}
 $\PLNE$ is an extension of the classical basis $\PLV$ with the \emph{nonemptiness atom} $\NE$, true in a team just in case the team is nonempty:
 $$t\vDash \NE \iff t\neq \emptyset$$
We define $\top$, $\alpha \curlyvee \beta$, $\bigvee\emptyset$ and $\bigwedge\emptyset$ as before, and let $\Bot:=\bot \land \NE$ and $\filleddiamond \phi:=(\phi\land \NE)\vee \top$. It is easy to see that the truth conditions of the same symbol are still always the same, regardless of the logic.

$\PLNE$ clearly violates downward closure and the empty team property, but we do have convexity and union closure---interestingly, in contrast with Fact \ref{c:fact:disjunctions_convexity_break}, the split disjunction $\vee$ does preserve convexity in a convex and union-closed setting.
\begin{proposition} \label{c:prop:PLNE_convex_union_closed}
    Formulas of $\PLNE$ are convex and union closed.
\end{proposition}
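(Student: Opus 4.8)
The plan is to prove convexity and union closure simultaneously by a single induction on the structure of $\PLNE$-formulas. Since $\PLNE$ is the extension of the classical basis $\PLV$ with $\NE$, the cases to handle are $p$, $\bot$, $\land$, $\vee$, $\lnot\alpha$, and $\NE$. For each atomic and compound case I would verify both that convexity is preserved (for $s\subseteq u\subseteq t$ with $s,t\vDash\phi$, show $u\vDash\phi$) and that union closure is preserved (for a nonempty family $T$ with every member satisfying $\phi$, show $\bigcup T\vDash\phi$).

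The genuinely easy cases come first. For $p$ and $\bot$: $t\vDash p$ iff $v(p)=1$ for all $v\in t$, and $t\vDash\bot$ iff $t=\varnothing$; both properties are clearly downward closed (hence convex) and union closed. For $\NE$: union closure is immediate since a union of a nonempty family of nonempty teams is nonempty, and convexity holds because if $s\vDash\NE$ (so $s\neq\varnothing$) and $s\subseteq u$, then $u\neq\varnothing$, so $u\vDash\NE$. For $\lnot\alpha$ with $\alpha$ classical: by Fact \ref{c:fact:classical_correspondence} all classical formulas are flat, hence downward and union closed, and $\lnot\alpha$ here has a flat semantics too ($t\vDash\lnot\alpha$ iff $\{v\}\nvDash\alpha$ for all $v\in t$), so it is flat and therefore both convex and union closed. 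The conjunction case is a routine intersection argument: convexity and union closure are both preserved by $\land$ since $\Vert\phi\land\psi\Vert=\Vert\phi\Vert\cap\Vert\psi\Vert$ and both closure properties are closed under intersection (for convexity, chase $s,t$ through both conjuncts; for union closure, note $\bigcup T$ satisfies each conjunct by the induction hypothesis applied to $T$).

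The main obstacle is the split disjunction $\vee$, precisely because Fact \ref{c:fact:disjunctions_convexity_break} warns that $\vee$ breaks convexity in the unrestricted convex setting; what makes it work here is the extra leverage provided by the \emph{union-closure} half of the induction hypothesis. For union closure of $\phi\vee\psi$: given a nonempty family $T$ with each $t_k\vDash\phi\vee\psi$, split each $t_k=s_k\cup u_k$ with $s_k\vDash\phi$ and $u_k\vDash\psi$; then $\bigcup_k s_k\vDash\phi$ and $\bigcup_k u_k\vDash\psi$ by the induction hypothesis (union closure for $\phi$ and $\psi$), and $\bigcup T=(\bigcup_k s_k)\cup(\bigcup_k u_k)$ witnesses $\bigcup T\vDash\phi\vee\psi$. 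For convexity of $\phi\vee\psi$, suppose $s\vDash\phi\vee\psi$, $t\vDash\phi\vee\psi$, and $s\subseteq u\subseteq t$. Write $s=s_\phi\cup s_\psi$ and $t=t_\phi\cup t_\psi$ with the obvious satisfactions. The idea is to build a splitting of $u$ by setting $u_\phi:=(s_\phi\cup t_\phi)\cap u$ and $u_\psi:=(s_\psi\cup t_\psi)\cap u$; this clearly gives $u=u_\phi\cup u_\psi$ (using $s\subseteq u$ and $u\subseteq t$). The crux is then to show $u_\phi\vDash\phi$ and $u_\psi\vDash\psi$, which I expect to establish by combining union closure and convexity of the components: for instance, $s_\phi\vDash\phi$ and $t_\phi\vDash\phi$ give $s_\phi\cup t_\phi\vDash\phi$ by union closure, and then $s_\phi\subseteq u_\phi\subseteq s_\phi\cup t_\phi$ (the left inclusion because $s_\phi\subseteq s\subseteq u$, the right by construction) yields $u_\phi\vDash\phi$ by convexity, and symmetrically for $\psi$. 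Verifying that these inclusions and the covering $u=u_\phi\cup u_\psi$ hold exactly is where the care is needed, and it is here that union closure is indispensable — it is the ingredient missing in the general convex setting of Fact \ref{c:fact:disjunctions_convexity_break}, and the reason $\vee$ is rehabilitated once we restrict to union-closed properties.
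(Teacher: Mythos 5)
Your proof is correct and takes essentially the same approach as the paper's: a structural induction whose only nontrivial case is $\vee$, handled by the very same construction $u_\phi:=(s_\phi\cup t_\phi)\cap u$, $u_\psi:=(s_\psi\cup t_\psi)\cap u$, applying union closure to get $s_\phi\cup t_\phi\vDash\phi$ and then convexity via $s_\phi\subseteq u_\phi\subseteq s_\phi\cup t_\phi$. The paper merely leaves the atomic, $\land$, $\lnot\alpha$, and union-closure-of-$\vee$ cases implicit as "straightforward," whereas you spell them out.
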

\begin{proof}
    By induction on the structure of formulas. Most cases are straightforward---note in particular that $\NE$ is upward closed and therefore also convex.
    
    We only explicitly show that $\phi \vee\psi$ is convex whenever $\phi$ and $\psi$ are convex and union closed. Let $s\vDash \phi \vee\psi$, $t\vDash \phi \vee\psi$, and $s\subseteq u \subseteq t$. Then $s=s_\phi\cup s_\psi$ and $t=t_\phi\cup t_\psi$ where $t_\phi\vDash \phi$, etc. Let $u_\phi:=(s_\phi \cup t_\phi)\cap u$ and $u_\psi:=(s_\psi \cup t_\psi)\cap u$. We have $s_\phi \cup t_\phi\vDash \phi$ and $s_\psi \cup t_\psi\vDash \psi$ by union closure, whence $u_\phi\vDash \phi$ by convexity since $s_\phi \subseteq u_\phi \subseteq s_\phi \cup t_\phi$. Similarly, $u_\psi\vDash \psi$. Clearly $u=u_\phi \cup u_\psi$, whence $u\vDash \phi \vee\psi$.
\end{proof}

The above shows $\left\Vert \PLNE\right\Vert\subseteq \mathbb{CU}$. We now further  show---solving a problem that was left open in \cite{yang2017}---that $\left\Vert \PLNE\right\Vert= \mathbb{CU}$---that is, that $\PLNE$ is expressively complete for $\mathbb{CU}$.

We prove this in two distinct ways---it is instructive to see both of these proofs as they break down the characteristics of union-closed convex properties in different ways and feature distinct (if similar) characteristic formulas. For the first proof we use the formula $\Bot=\bot \land \NE$ for the empty property, and construct, for each nonempty property $\PPP$, a formula $\chi^\mathbb{U}_\PPP$ such that 
\begin{align*}
    t\vDash \chi^\mathbb{U}_\PPP  \iff \exists s\in \PPP: t\supseteq s,
\end{align*}
and a formula $\chi^\mathbb{F}_\PPP$ such that 
\begin{align*}
    t\vDash \chi^\mathbb{F}_\PPP  \iff t\subseteq \bigcup \PPP.
\end{align*}
As before, the formulas $\chi^\mathbb{U}_\PPP$ are characteristic formulas for nonempty upward-closed properties; the formulas $\chi^\mathbb{F}_\PPP$ are characteristic formulas for flat properties: for flat $\PPP$, $\left\Vert \chi^{\mathbb{F}}_\PPP\right\Vert=\PPP$. Using these formulas, we construct characteristic formulas for nonempty union-closed convex properties as follows:

\begin{lemma} \label{c:lemma:convex_uc_char_formula}
    Let $\mathsf{X}\subseteq\mathsf{P} $ be finite, and for each $\PPP\neq \emptyset$ over $\mathsf{X}$, let $\chi^{\mathsf{X},\mathbb{F}}_\PPP$ and $\chi^{\mathsf{X},\mathbb{U}}_\PPP$ be such that for any $t\subseteq 2^{\mathsf{X}}$, $t\vDash \chi^{\mathsf{X},\mathbb{F}}_\PPP  \iff t\subseteq \bigcup \PPP$ and $t\vDash \chi^{\mathsf{X},\mathbb{U}}_\PPP  \iff \exists s\in \PPP: t\supseteq s$. Then for any union-closed convex $\PPP\neq \emptyset$ over $\mathsf{X}$, $\left\Vert \chi^{\mathsf{X},\mathbb{D}}_\PPP\land \chi^{\mathsf{X},\mathbb{U}}_\PPP\right \Vert_{\mathsf{X}}=\PPP  $.
\end{lemma}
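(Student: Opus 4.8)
The plan is to prove this in direct analogy with Lemma \ref{c:lemma:convex_char_formula}, exploiting the fact that a nonempty union-closed convex property is pinned down between a ``ceiling'' (its union, a flat property) and a ``floor'' (its upward closure). Concretely, I would show the two inclusions $\supseteq$ and $\subseteq$ for the conjunction $\chi^{\mathsf{X},\mathbb{F}}_\PPP\land \chi^{\mathsf{X},\mathbb{U}}_\PPP$ (note the statement as written mentions $\chi^{\mathsf{X},\mathbb{D}}_\PPP$, but the intended formula is $\chi^{\mathsf{X},\mathbb{F}}_\PPP$, matching the hypotheses).

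For the inclusion $\supseteq$, I would take any $t\in \PPP$ and verify it satisfies both conjuncts. Since $t\subseteq \bigcup \PPP$ trivially, the defining property of $\chi^{\mathsf{X},\mathbb{F}}_\PPP$ gives $t\vDash \chi^{\mathsf{X},\mathbb{F}}_\PPP$; and since $t\supseteq t$ with $t\in \PPP$, the defining property of $\chi^{\mathsf{X},\mathbb{U}}_\PPP$ gives $t\vDash \chi^{\mathsf{X},\mathbb{U}}_\PPP$. Hence $t\vDash \chi^{\mathsf{X},\mathbb{F}}_\PPP\land \chi^{\mathsf{X},\mathbb{U}}_\PPP$. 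This direction does not even use union-closure or convexity.

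For the inclusion $\subseteq$, suppose $t\vDash \chi^{\mathsf{X},\mathbb{F}}_\PPP\land \chi^{\mathsf{X},\mathbb{U}}_\PPP$. From the second conjunct there is some $s\in \PPP$ with $s\subseteq t$, giving a lower witness. From the first conjunct $t\subseteq \bigcup \PPP$. The key idea is then to produce an \emph{upper} witness $u\in \PPP$ with $t\subseteq u$, so that $s\subseteq t\subseteq u$ with $s,u\in \PPP$ lets convexity conclude $t\in \PPP$. To obtain $u$, I would use union-closure: since $t\subseteq \bigcup \PPP$, each $v\in t$ lies in some member $s_v\in \PPP$; the collection $\{s_v\mid v\in t\}$ is a nonempty subcollection of $\PPP$ (nonempty because $\PPP\neq\emptyset$, and in the degenerate case $t=\emptyset$ one simply takes any member of $\PPP$ as the upper witness), so by union-closure $u:=\bigcup_{v\in t} s_v\in \PPP$, and clearly $t\subseteq u$. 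Thus $s\subseteq t\subseteq u$ with $s,u\in\PPP$, and convexity yields $t\in \PPP$.

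The main obstacle is the construction of the upper witness $u$, which is exactly where this lemma differs from Lemma \ref{c:lemma:convex_char_formula}: there the upward characteristic formula directly supplied an upper bound, whereas here the flatness conjunct only bounds $t$ by $\bigcup \PPP$, which need not itself belong to $\PPP$. Union-closure is precisely the tool that repairs this, converting the pointwise membership witnesses $s_v$ into a single member of $\PPP$ above $t$. One should be slightly careful with the empty-team edge case and with the nonemptiness of the indexing collection, but otherwise the argument is routine.
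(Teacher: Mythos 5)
Your proof is correct, and its skeleton is the same as the paper's: the $\supseteq$ direction is the trivial squeeze $t\subseteq t\subseteq\bigcup\PPP$, and the $\subseteq$ direction extracts a lower witness $s\in\PPP$ with $s\subseteq t$ from $\chi^{\mathbb{U}}_\PPP$, manufactures an upper witness in $\PPP$ above $t$ via union closure, and concludes by convexity. (You are also right that $\chi^{\mathbb{D}}_\PPP$ in the statement is a typo for $\chi^{\mathbb{F}}_\PPP$; the paper's proof indeed works with $\chi^{\mathbb{F}}_\PPP\land\chi^{\mathbb{U}}_\PPP$.) The one place you diverge is the construction of the upper witness, and there the paper is simpler: since $\PPP$ itself is a nonempty collection of teams each belonging to $\PPP$, union closure applied to all of $\PPP$ gives $\bigcup\PPP\in\PPP$ outright, so $s\subseteq t\subseteq\bigcup\PPP$ already places $t$ between two members of $\PPP$. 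Your remark that $\bigcup\PPP$ ``need not itself belong to $\PPP$'' is therefore false in this union-closed setting---noticing that it \emph{does} belong is exactly what makes the paper's proof a two-liner. Your pointwise construction $u=\bigcup_{v\in t}s_v$ is a valid alternative application of union closure, but it forces the extra bookkeeping you flagged: the collection $\{s_v\mid v\in t\}$ is nonempty iff $t\neq\emptyset$ (not ``because $\PPP\neq\emptyset$'', as you wrote), so the case $t=\emptyset$ must be split off, whereas with $\bigcup\PPP$ as the witness no case distinction is needed.
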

\begin{proof}
$\supseteq$: For any $t\in \PPP$, $t\subseteq t\subseteq \bigcup\PPP$, whence $t\vDash \chi^{\mathbb{F}}_\PPP\land \chi^{\mathbb{U}}_\PPP$.

$\subseteq$: If $t\vDash \chi^{\mathbb{F}}_\PPP\land \chi^{\mathbb{U}}_\PPP$, then for some $s\in \PPP$, $s\subseteq t\subseteq 
\bigcup \PPP$. We have $\bigcup \PPP\in \PPP$ by union closure and the fact that $\PPP\neq \emptyset$, whence $t\in \PPP$ by convexity.
\end{proof}

Clearly given $\filleddiamond \phi\equiv (\phi\land \NE)\vee \top$, we can construct the formulas $\chi^{\mathbb{U}}_\PPP$ analogously to how we did in Lemma \ref{c:lemma:upward_char_formula}. As for the formulas $\chi^{\mathbb{F}}_\PPP$, we use the following (which can be used to prove that $\PLV$ is expressively complete for the class of all flat properties \cite{yang2017}):

\begin{lemma}\label{c:lemma:flat_char_formulas}
Let $\mathsf{X}\subseteq \mathsf{P}$ be finite. For $\PPP$ over $\mathsf{X}$, let $$\chi^{\mathsf{X},\mathbb{F}}_\PPP:=\bigvee_{s\in \PPP}\chi^{\mathsf{X}}_s.$$
Then for any $t\subseteq 2^{\mathsf{X}}$, $t\vDash \chi^{\mathsf{X},\mathbb{F}}_\PPP  \iff t\subseteq \bigcup \PPP$.  
\end{lemma}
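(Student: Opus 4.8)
The plan is to prove the biconditional $t \vDash \chi^{\mathsf{X},\mathbb{F}}_\PPP \iff t \subseteq \bigcup \PPP$ by unwinding the semantics of the split disjunction $\bigvee_{s \in \PPP} \chi^{\mathsf{X}}_s$ and exploiting the key fact, already established in the excerpt, that $u \vDash \chi^{\mathsf{X}}_s \iff u \subseteq s$ for $u, s \subseteq 2^{\mathsf{X}}$. Since $\PPP$ over a finite $\mathsf{X}$ may be treated as finite (by the footnoted convention on representatives), the big disjunction $\bigvee_{s \in \PPP}$ is a well-defined finite iterated split disjunction, so I may reason about it by the semantic clause for $\vee$.

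For the left-to-right direction, suppose $t \vDash \bigvee_{s \in \PPP} \chi^{\mathsf{X}}_s$. By the clause for $\vee$ (iterated), there is a decomposition $t = \bigcup_{s \in \PPP} u_s$ with $u_s \vDash \chi^{\mathsf{X}}_s$ for each $s$. By the characteristic-formula fact, each $u_s \subseteq s$, hence $t = \bigcup_{s \in \PPP} u_s \subseteq \bigcup_{s \in \PPP} s = \bigcup \PPP$, as required. For the right-to-left direction, suppose $t \subseteq \bigcup \PPP$. The natural witnessing decomposition is to set, for each $s \in \PPP$, $u_s := t \cap s$; then each $u_s \subseteq s$ gives $u_s \vDash \chi^{\mathsf{X}}_s$, and $\bigcup_{s \in \PPP} u_s = t \cap \bigcup \PPP = t$ since $t \subseteq \bigcup \PPP$. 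This exhibits $t$ as a split into pieces each satisfying the respective disjunct, so $t \vDash \bigvee_{s \in \PPP} \chi^{\mathsf{X}}_s$.

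The one point requiring minor care is the boundary case $\PPP = \emptyset$ (if it is not excluded by context): there $\bigcup \PPP = \emptyset$ and $\bigvee_{s \in \emptyset} \chi^{\mathsf{X}}_s = \bot$ by the stipulation $\bigvee \emptyset := \bot$, so both sides hold exactly when $t = \emptyset$, and the equivalence still goes through. I expect no real obstacle here; the only thing to be careful about is that the iterated split disjunction genuinely allows empty disjunct-pieces (the excerpt explicitly notes that one of the two subteams in a $\vee$-split may be empty), which is precisely what makes the decomposition $u_s = t \cap s$ legitimate even when some $t \cap s = \emptyset$. Thus the main work is simply bookkeeping with the $\vee$-clause, and the proof can be stated in a few lines.
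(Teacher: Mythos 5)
Your proof is correct and follows essentially the same route as the paper's: unwind the iterated $\vee$-clause, apply the fact that $u \vDash \chi^{\mathsf{X}}_s \iff u \subseteq s$, and witness the right-to-left direction with the decomposition $u_s := t \cap s$, which is exactly the paper's argument written as two explicit directions rather than a chain of equivalences. Your explicit treatment of the boundary case $\PPP = \emptyset$ (via the stipulation $\bigvee \emptyset := \bot$) is a small addition the paper leaves implicit, but it changes nothing substantive.
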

\begin{proof}
    Letting $\PPP=\{s_1,\ldots, s_n\}$, we have that 
    \begin{align*}
        t\vDash\bigvee_{s\in \PPP}\chi_s &\iff [\exists t_1,\ldots, t_n :t=\bigcup_{i=1}^n t_i\text{ and }t_i\vDash \chi_{s_i}\text{ for each }1 \leq i\leq n]\\
        &\iff [\exists t_1,\ldots, t_n :t=\bigcup_{i=1}^n t_i\text{ and }t_i\subseteq s_i \text{ for each }1 \leq i\leq n]\iff t\subseteq \bigcup \PPP. \tag*{\qedhere}
    \end{align*}
\end{proof}

We have finished proving the required lemmas for the first proof. In the second proof, we make a case distinction as to whether or not $\PPP$ has the empty team property.

\begin{lemma}\label{c:lemma:convex_union_closed_characteristic_formulas} For each finite $\mathsf{X}$ and each $\PPP=\{t_1,\ldots, t_n\}\in \mathbb{CU}_\mathsf{X}$: 
\begin{enumerate}
    \item[(a)] \label{c:lemma:convex_union_closed_characteristic_formulas_empty} If $\PPP$ is the empty property, $\PPP=\left\Vert \Bot\right\Vert_{\mathsf{X}}$.
    \item[(b)] \label{c:lemma:convex_union_closed_characteristic_formulas_empty_team_property} If $\PPP$ has the empty team property, $\PPP=\left\Vert \bigvee_{s\in \PPP} \chi_s^\mathsf{X}\right\Vert_{\mathsf{X}}$.
    \item[(c)] \label{c:lemma:convex_union_closed_characteristic_formulas_no_empty_team_property} If $\PPP\neq \emptyset$ and $\PPP$ does not have the empty team property,
    $$\PPP=\left\Vert\bigvee_{v_1\in t_1,\ldots,v_n\in t_n}((\chi_{v_1}^\mathsf{X}\vee \cdots \vee \chi_{v_n}^\mathsf{X})\land \NE)\right\Vert_{\mathsf{X}}.$$
\end{enumerate}
\end{lemma}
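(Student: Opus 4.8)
The plan is to treat the three cases in increasing order of difficulty, dispatching (a) and (b) quickly and concentrating the work on (c). For (a), since $\Bot:=\bot\land\NE$ and $t\vDash\bot$ holds exactly when $t=\emptyset$ while $t\vDash\NE$ holds exactly when $t\neq\emptyset$, no team satisfies $\Bot$, so $\left\Vert\Bot\right\Vert_{\mathsf{X}}=\emptyset=\PPP$. For (b) I would first invoke Lemma \ref{c:lemma:flat_char_formulas} (with $\chi^{\mathsf{X},\mathbb{F}}_\PPP=\bigvee_{s\in\PPP}\chi_s^{\mathsf{X}}$), which gives $t\vDash\bigvee_{s\in\PPP}\chi_s^{\mathsf{X}}\iff t\subseteq\bigcup\PPP$; it then remains to show $\{t\subseteq 2^{\mathsf{X}}\mid t\subseteq\bigcup\PPP\}=\PPP$. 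The inclusion $\PPP\subseteq\{t\mid t\subseteq\bigcup\PPP\}$ is immediate, and for the converse, union-closure together with $\PPP\neq\emptyset$ gives $\bigcup\PPP\in\PPP$ while the empty team property gives $\emptyset\in\PPP$; so for any $t\subseteq\bigcup\PPP$ we have $\emptyset\subseteq t\subseteq\bigcup\PPP$ with both endpoints in $\PPP$, whence $t\in\PPP$ by convexity.

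The substance is in (c). I would abbreviate the target formula by $\psi$ and, for a tuple $\vec{v}=(v_1,\dots,v_n)\in t_1\times\cdots\times t_n$, write $[\vec{v}]:=\{v_1,\dots,v_n\}$ and $\theta_{\vec{v}}:=(\chi_{v_1}\vee\cdots\vee\chi_{v_n})\land\NE$, so that $\psi=\bigvee_{\vec{v}}\theta_{\vec{v}}$. Using the flat semantics of $\chi_{v_1}\vee\cdots\vee\chi_{v_n}$ and of $\NE$, one has $s\vDash\theta_{\vec{v}}\iff\emptyset\neq s\subseteq[\vec{v}]$; unfolding the generalized split disjunction, $t\vDash\psi$ iff there are teams $s_{\vec{v}}$ with $t=\bigcup_{\vec{v}}s_{\vec{v}}$ and $\emptyset\neq s_{\vec{v}}\subseteq[\vec{v}]$ for every $\vec{v}$. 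The first key step reduces this to two clean conditions:
$$t\vDash\psi\iff\bigl[\text{for every }\vec{v},\ [\vec{v}]\cap t\neq\emptyset\bigr]\ \text{ and }\ t\subseteq\bigcup\PPP.$$
For the forward direction both conjuncts follow since each $s_{\vec{v}}$ is a nonempty subset of $[\vec{v}]\cap t$ and $t=\bigcup_{\vec{v}}s_{\vec{v}}\subseteq\bigcup_{\vec{v}}[\vec{v}]=\bigcup\PPP$, the last equality using that every $t_i$ is nonempty (as $\emptyset\notin\PPP$). Conversely, given the right-hand side, the witnesses $s_{\vec{v}}:=[\vec{v}]\cap t$ are nonempty and have union $\bigl(\bigcup\PPP\bigr)\cap t=t$.

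The second key step rewrites the first conjunct. Negating it, there is a tuple $\vec{v}$ with every $v_i\notin t$, which is possible exactly when $t_i\not\subseteq t$ for all $i$ (choosing $v_i\in t_i\setminus t$); hence the first conjunct is equivalent to $\exists\,t_i\in\PPP:t_i\subseteq t$. Combining the two steps yields $t\vDash\psi\iff[\exists s\in\PPP:s\subseteq t]\ \text{and}\ t\subseteq\bigcup\PPP$, which is exactly the conjunction of the characteristic conditions for $\chi^{\mathbb{U}}_\PPP$ and $\chi^{\mathbb{F}}_\PPP$ from the first proof. The equality $\left\Vert\psi\right\Vert_{\mathsf{X}}=\PPP$ then follows as in Lemma \ref{c:lemma:convex_uc_char_formula}: every $t\in\PPP$ satisfies both conjuncts (via $s=t$ and $t\subseteq\bigcup\PPP$), and any $t$ satisfying both lies between some $s\in\PPP$ and $\bigcup\PPP\in\PPP$, so $t\in\PPP$ by convexity.

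I expect the transversal argument of the second key step—showing that ``$[\vec{v}]\cap t\neq\emptyset$ for all tuples $\vec{v}$'' collapses to ``$t$ contains some member of $\PPP$''—to be the only genuinely non-routine point. The interplay between the outer split disjunction and the $\NE$-conjuncts is precisely what makes $\vee$ usable here despite Fact \ref{c:fact:disjunctions_convexity_break}, and its correctness leans on the union-closure established in Proposition \ref{c:prop:PLNE_convex_union_closed}.
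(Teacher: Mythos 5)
Your proof is correct, and its mathematical core coincides with the paper's: the same two conditions ($t_i\subseteq t$ for some $i$, and $t\subseteq\bigcup\PPP$), the same transversal argument (your step of negating the first conjunct and choosing $v_i\in t_i\setminus t$ is exactly the paper's proof-by-contradiction choosing $w_i\in t_i\setminus s$), and the same closing appeal to union closure and convexity via $t_i\subseteq t\subseteq\bigcup\PPP\in\PPP$. The difference is organizational, but worth noting. The paper proves the two inclusions of (c) separately: it establishes $\PPP\subseteq\left\Vert\psi\right\Vert$ by a direct witness construction (splitting $t_i$ into singletons $\{v_i\}$, each of which satisfies every disjunct whose $i$-th index is $v_i$), and extracts the two conditions only in the converse inclusion. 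You instead prove the exact characterization $t\vDash\psi\iff\bigl[\exists i:t_i\subseteq t\text{ and }t\subseteq\bigcup\PPP\bigr]$, with witnesses $s_{\vec v}:=[\vec v]\cap t$ for the right-to-left direction, and then finish by citing Lemma \ref{c:lemma:convex_uc_char_formula}; in effect you show that the characteristic formula of the paper's second proof of Theorem \ref{c:theorem:NE_expressive_completeness} defines precisely the conjunction of the conditions used for $\chi^{\mathbb{U}}_\PPP\land\chi^{\mathbb{F}}_\PPP$ in its first proof, thereby unifying the two proofs. This buys a slightly stronger intermediate statement---an exact description of $\left\Vert\psi\right\Vert$ valid for any nonempty $\PPP$ lacking the empty team property, whether or not it is convex or union closed---at no extra cost. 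Items (a) and (b) are also fine; in (b) the paper routes through Fact \ref{c:fact:closure_props} (downward closure, hence flatness), while you re-derive what is needed directly from convexity, $\emptyset\in\PPP$, and $\bigcup\PPP\in\PPP$. One small slip in your closing commentary: the correctness of the lemma leans on the union closure of the property $\PPP$ assumed in its statement, not on Proposition \ref{c:prop:PLNE_convex_union_closed}, which concerns formulas of $\PLNE$ and is used for the other half of the expressive completeness theorem.
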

\begin{proof}
    Item \ref{c:lemma:convex_union_closed_characteristic_formulas_empty} is obvious. For item \ref{c:lemma:convex_union_closed_characteristic_formulas_empty_team_property}, note that by Fact \ref{c:fact:closure_props}, $\PPP$ is downward closed and hence also flat. By Lemma \ref{c:lemma:flat_char_formulas}, $t\vDash \bigvee_{s\in \PPP} \chi$ iff $t\subseteq \bigcup \PPP$, and, if $\PPP$ is flat, clearly $t\subseteq \bigcup \PPP$ iff $t\in \PPP$.

    We now show item \ref{c:lemma:convex_union_closed_characteristic_formulas_no_empty_team_property}. For the direction $\subseteq$, let $t_i\in \PPP$. For each $v_i\in t_i$, we have that $\{v_i\}\vDash \chi_{v_i}\land \NE$, so that also (using the empty team property of classical formulas), $\{v_i\}\vDash (\chi_{v_1}\vee \cdots \vee \chi_{v_n})\land \NE$ for any choice of $v_1\in t_1,\ldots, v_{i-1}\in t_{i-1},v_{i+1}\in t_{i+1},\ldots, v_{n}\in t_{n}$. Therefore,
    
    $$\{v_i\}\vDash \bigvee_{v_1\in t_1}\ldots\bigvee_{v_{i-1}\in t_{i-1}} \bigvee_{v_{i+1}\in t_{i+1}} \ldots\bigvee_{v_n\in t_n}((\chi_{v_1}\vee \cdots \vee \chi_{v_n})\land \NE),$$
whence $t_i=\bigcup_{v_i\in t_i} \{v_i\}\vDash\bigvee_{v_1\in t_1,\ldots,v_n\in t_n}((\chi_{v_1}\vee \ldots \vee \chi_{v_n})\land \NE)$.

For the direction $\supseteq$, let $s\vDash \bigvee_{v_1\in t_1,\ldots,v_n\in t_n}((\chi_{v_1}\vee \ldots \vee \chi_{v_n})\land \NE)$. By the fact that $\PPP$ does not have the empty team property, no $t_i\in \PPP$ is the empty team. We then have that $s=\bigcup_{v_1\in t_1,\ldots,v_n\in t_n} t_{v_1,\ldots,v_n}$ where $t_{v_1,\ldots,v_n}\vDash (\chi_{v_1}\vee \cdots \vee \chi_{v_n})\land \NE$. (Note that if $\PPP$ did have the empty team property, we would have $\bigvee_{v_1\in t_1,\ldots,v_n\in t_n}((\chi_{v_1}\vee \cdots \vee \chi_{v_n})\land \NE)\equiv \bigvee \emptyset \equiv\bot$.)

We show that $s\subseteq \bigcup \PPP$ and that $t_i\subseteq s$ for some $t_i\in \PPP$. Since $\bigcup \PPP\in \PPP$ by the union closure of $\PPP$ (as well as the fact that $\PPP\neq \emptyset$), we will then have $t_i\in \PPP$ by the convexity of $\PPP$.

$s\subseteq \bigcup \PPP$: We have that $s=\bigcup_{v_1\in t_1,\ldots,v_n\in t_n} t_{v_1,\ldots,v_n}$, and for each $t_{v_1,\ldots,v_n}$, we have, by $t_{v_1,\ldots,v_n}\vDash (\chi_{v_1}\vee \ldots \vee \chi_{v_n})\land \NE$, that $t_{v_1,\ldots,v_n}\subseteq \bigcup_{1\leq i \leq n}\{v_i\}\subseteq \bigcup \PPP$.

$t_i\subseteq s$ for some $t_i\in \PPP$: Assume for contradiction that $t_i\not\subseteq s$ for all $t_i \in \PPP$. Then for each $t_i\in \PPP$ there is some $w_i\in t_i$ such that $w_i\notin s$. We have $t_{w_1,\ldots,w_n}\vDash (\chi_{w_1}\vee \ldots \vee \chi_{w_n})\land \NE$ so $t_{w_1,\ldots,w_n}\subseteq \bigcup_{1\leq i \leq n}\{w_i\}$ and $t_{w_1,\ldots,w_n}\neq \emptyset$. We also have that $t_{w_1,\ldots,w_n}\subseteq s$, whence $s\cap \bigcup_{1\leq i \leq n}\{w_i\}\neq \emptyset$, contradicting the fact that $w_i\notin s$ for all $t_i\in \PPP$.
\end{proof}

Putting together Proposition \ref{c:prop:PLNE_convex_union_closed}, the Lemmas \ref{c:lemma:convex_uc_char_formula}, \ref{c:lemma:upward_char_formula}, and \ref{c:lemma:flat_char_formulas} (for the first proof), and Lemma \ref{c:lemma:convex_union_closed_characteristic_formulas} (for the second proof), we have shown:

\begin{theorem}\label{c:theorem:NE_expressive_completeness}
    $\PLNE$ is expressively complete for convex union-closed properties: $$\left\Vert \PLNE\right\Vert=\mathbb{CU}.$$
\end{theorem}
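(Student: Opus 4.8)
The plan is to prove the two inclusions constituting expressive completeness separately. The inclusion $\left\Vert \PLNE\right\Vert \subseteq \mathbb{CU}$ requires no further work: it is exactly Proposition \ref{c:prop:PLNE_convex_union_closed}, which establishes that every $\PLNE$-formula is convex and union closed. Everything therefore reduces to the converse inclusion $\mathbb{CU} \subseteq \left\Vert \PLNE\right\Vert$. Here I would fix a finite $\mathsf{X} \subseteq \mathsf{P}$ and an arbitrary $\PPP \in \mathbb{CU}_\mathsf{X}$ and exhibit a $\PLNE$-formula whose property over $\mathsf{X}$ is exactly $\PPP$. The empty property is handled at once by $\left\Vert\Bot\right\Vert_\mathsf{X} = \emptyset$ (since $\Bot = \bot \land \NE$ is never satisfied), so I would assume $\PPP \neq \emptyset$ throughout.

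For nonempty $\PPP$ I would give a \emph{characteristic formula}, and the first route is to reuse the assembly pattern of Lemma \ref{c:lemma:convex_uc_char_formula}. The idea is to decompose membership $t \in \PPP$ into a flat upper bound $t \subseteq \bigcup\PPP$ and an upward witness $\exists s \in \PPP : s \subseteq t$; for union-closed convex nonempty $\PPP$ these two conditions jointly recover $t \in \PPP$, because $\bigcup\PPP \in \PPP$ by union closure and convexity then squeezes $t$ between some $s \in \PPP$ and $\bigcup\PPP$. Thus it suffices to realize each bound by a $\PLNE$-formula and conjoin them. The upper bound is given by $\chi^{\mathsf{X},\mathbb{F}}_\PPP := \bigvee_{s\in\PPP}\chi^{\mathsf{X}}_s$ from Lemma \ref{c:lemma:flat_char_formulas}, which is already a $\PLV$-formula and hence lives in $\PLNE$; the upward witness is given by the formula of Lemma \ref{c:lemma:upward_char_formula}, noting that the operator $\filleddiamond$ appearing there is definable in $\PLNE$ via $\filleddiamond\phi := (\phi \land \NE)\vee\top$ with an unchanged truth condition. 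Lemma \ref{c:lemma:convex_uc_char_formula} then certifies that $\chi^{\mathsf{X},\mathbb{F}}_\PPP \land \chi^{\mathsf{X},\mathbb{U}}_\PPP$ expresses $\PPP$.

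A second, self-contained route is to case-split on the empty team property, as in Lemma \ref{c:lemma:convex_union_closed_characteristic_formulas}. If $\emptyset \in \PPP$, then Fact \ref{c:fact:closure_props} forces $\PPP$ to be downward closed and hence flat, so the plain split-disjunction formula $\bigvee_{s\in\PPP}\chi^{\mathsf{X}}_s$ already defines it. If $\emptyset \notin \PPP$, writing $\PPP = \{t_1,\ldots,t_n\}$, I would use the $\NE$-decorated choice-disjunction $\bigvee_{v_1\in t_1,\ldots,v_n\in t_n}((\chi^{\mathsf{X}}_{v_1}\vee\cdots\vee\chi^{\mathsf{X}}_{v_n})\land\NE)$.

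I expect this last formula to be the main obstacle, and the crux of the whole argument. One must verify that a team $s$ satisfying it is simultaneously bounded above by $\bigcup\PPP$ and bounded below by some $t_i \in \PPP$, so that union closure ($\bigcup\PPP \in \PPP$) and convexity deliver $s \in \PPP$. The upper bound is routine: each split-component lands inside its chosen $\{v_1,\ldots,v_n\} \subseteq \bigcup\PPP$. The lower witness is where $\NE$ does the essential work: the $\NE$ conjunct forces every split-component to be nonempty while the classical disjunction confines it to the chosen witnesses, so if some $t_i$ were wholly missed by $s$ one could pick $w_i \in t_i \setminus s$ for each $i$ and derive that the corresponding nonempty component lies in $\{w_1,\ldots,w_n\}$, entirely outside $s$---a contradiction. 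The empty-team case must be separated precisely because, if some $t_i = \emptyset$, this choice-disjunction degenerates to $\bot$ and would wrongly exclude $\emptyset$. Getting this encoding right---using only $\vee$ and $\NE$ to pin down both bounds at once---is the delicate part; once it is in place, Lemma \ref{c:lemma:convex_union_closed_characteristic_formulas} (or Lemma \ref{c:lemma:convex_uc_char_formula}) assembles the pieces into the theorem.
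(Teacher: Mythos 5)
Your proposal is correct and coincides with the paper's own proof --- indeed the paper gives exactly your two routes: the conjunction $\chi^{\mathbb{F}}_\PPP \land \chi^{\mathbb{U}}_\PPP$ assembled from Lemmas \ref{c:lemma:convex_uc_char_formula}, \ref{c:lemma:upward_char_formula}, and \ref{c:lemma:flat_char_formulas} (with $\filleddiamond\phi := (\phi\land\NE)\vee\top$), and the case split with the $\NE$-decorated choice-disjunction of Lemma \ref{c:lemma:convex_union_closed_characteristic_formulas}, verified by the same contradiction argument. One peripheral slip: when some $t_i=\emptyset$, the choice-disjunction degenerates to $\bot$, which is true exactly at the empty team, so it would wrongly exclude the \emph{nonempty} members of $\PPP$ rather than wrongly exclude $\emptyset$; this does not affect your proof, since you handle that case separately with the flat formula anyway.
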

Notably, an analogous result was obtained independently in the setting of context-guarded team logics by Marius Tritschler in his Master's thesis \cite{tritschler}, where he introduces a closure property he calls `partial union property', which, in our setting, would translate to:
    $$\text{If $t\vDash \varphi$ and $t'\vDash \varphi$, then $t\cup s'\vDash \varphi$ for all $s'\subseteq t'$.}$$
It is readily seen that a formula $\varphi$ satisfies the partial union property if and only if it is convex and [finite] union-closed.

To conclude this section, let us comment on the relationship between the connectives we have studied which break downward closure: $\NE$ and $\filleddiamond$. As we have observed, in a setting with $\NE$ and $\vee$ (and $\top$ and $\land$), $\filleddiamond \phi$ is definable as $(\phi\land \NE)\vee \top$. On the other hand, $\NE$ is definable using $\filleddiamond$ (and $\top$) as $\filleddiamond \top$. It clearly follows that the extension of $\PLV$ with $\filleddiamond$ is also expressively complete for $\mathbb{CU}$. However, $\filleddiamond$ is stronger than $\NE$ in that swapping out $\filleddiamond$ with $\NE$ in any of our convex logics yields a logic that, while convex, is no longer complete for $\mathbb{C}$---an easy induction shows that these logics are \emph{downward closed modulo the empty team}: for each formula $\phi$ of one of the logics, if $t\vDash \phi$, $s\subseteq t$, and $s\neq \emptyset $, then $s\vDash \phi$. Clearly there are convex properties which are not downward closed modulo the empty team (e.g., $\left \Vert \filleddiamond p \land \filleddiamond \lnot p\right \Vert \in \mathbb{C}$).


\subsection{Modal Properties}
\label{c:section:union-closed_convex_modal}

We define the syntax and the semantics of the modal extensions $\MLLVNE$/$\MLGVNE$ in the obvious way. In contrast with the extensions featuring the global modalities defined in Section \ref{c:section:convex_modal}, the extension $\MLGVNE$ is convex--- the situation with $\diamonddiamond $ is analogous to that with $\vee$ in that whereas $\diamonddiamond$ does not preserve convexity in a convex setting, it does preserve convexity in a convex and union-closed setting:

\begin{proposition}\label{c:prop:MLNE_convex_union_closed}
    Formulas of $\MLLVNE$ and $\MLGVNE$ are convex and union closed.
\end{proposition}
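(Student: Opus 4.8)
The plan is to prove the proposition by induction on the structure of formulas, running in close parallel with the proof of Proposition~\ref{c:prop:PLNE_convex_union_closed}. Throughout, fix a model $M=(W,R,V)$ and abbreviate $M,t\vDash\phi$ as $t\vDash\phi$. The atomic cases $p$ and $\bot$, the classical negations $\neg\alpha$, the nonemptiness atom $\NE$, the conjunctions, and the split disjunctions $\vee$ are treated exactly as in the propositional case: $p$, $\bot$, and $\neg\alpha$ are flat (by the modal analogue of Fact~\ref{c:fact:classical_correspondence}) and hence convex and union closed (by the modal analogue of Fact~\ref{c:fact:closure_props}); $\NE$ is upward closed, hence convex, and visibly union closed; $\land$ preserves both properties; and the argument that $\vee$ preserves convexity for convex union-closed disjuncts transfers verbatim, since the modal $\vee$-clause is the pointwise analogue of the propositional one. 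For the local modalities $\Diamond$ and $\Box$ of $\MLLVNE$ nothing further is needed: as observed in Section~\ref{c:section:convex_modal}, the clauses for $\Diamond\phi$ and $\Box\phi$ render these formulas flat regardless of $\phi$, and flatness yields both convexity and union closure at once. Thus the only genuinely new cases are the global modalities $\diamonddiamond$ and $\boxbox$ of $\MLGVNE$.

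For $\boxbox$, assume $\phi$ is convex and union closed (IH) and recall that $t\vDash\boxbox\phi$ iff $R[t]\vDash\phi$, together with the elementary facts that $R[\cdot]$ is monotone and commutes with arbitrary unions, $R[\bigcup_i t_i]=\bigcup_i R[t_i]$. For union closure: if $t_i\vDash\boxbox\phi$ for all $i$ in a nonempty collection, then $R[t_i]\vDash\phi$ for all $i$, so $R[\bigcup_i t_i]=\bigcup_i R[t_i]\vDash\phi$ by union closure of $\phi$, giving $\bigcup_i t_i\vDash\boxbox\phi$. For convexity: if $s,t\vDash\boxbox\phi$ and $s\subseteq u\subseteq t$, then $R[s]\subseteq R[u]\subseteq R[t]$ with $R[s],R[t]\vDash\phi$, so $R[u]\vDash\phi$ by convexity of $\phi$, i.e.\ $u\vDash\boxbox\phi$.

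For $\diamonddiamond$, recall that $t\vDash\diamonddiamond\phi$ iff some successor team $s$ of $t$ (i.e.\ $tRs$) satisfies $\phi$. Union closure is routine: given witnesses $s_i$ with $t_iRs_i$ and $s_i\vDash\phi$ for each $i$ in a nonempty collection, one checks that $\bigcup_i s_i$ is a successor team of $\bigcup_i t_i$ (both defining inclusions follow from the corresponding inclusions for the individual $t_i,s_i$), and $\bigcup_i s_i\vDash\phi$ by union closure of $\phi$, so $\bigcup_i t_i\vDash\diamonddiamond\phi$. The hard part will be convexity of $\diamonddiamond\phi$, and this is the crux of the whole proposition. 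Suppose $s\vDash\diamonddiamond\phi$ via $sRs_0$ with $s_0\vDash\phi$, that $t\vDash\diamonddiamond\phi$ via $tRt_0$ with $t_0\vDash\phi$, and that $s\subseteq u\subseteq t$. The idea is to manufacture a successor team of $u$ that is sandwiched between two $\phi$-teams. I would set $u_0:=s_0\cup(t_0\cap R[u])$. Then $s_0\subseteq u_0\subseteq s_0\cup t_0$, and since $s_0\vDash\phi$ and $s_0\cup t_0\vDash\phi$ (union closure applied to $\{s_0,t_0\}$), convexity of $\phi$ gives $u_0\vDash\phi$. It then remains to verify $uRu_0$: the inclusion $u_0\subseteq R[u]$ holds because $s_0\subseteq R[s]\subseteq R[u]$ (as $s\subseteq u$) while $t_0\cap R[u]\subseteq R[u]$; and $u\subseteq R^{-1}[u_0]$ holds because each $w\in u\subseteq t$ has, by $tRt_0$, a successor $x\in t_0$, and then $x\in R[w]\subseteq R[u]$ forces $x\in t_0\cap R[u]\subseteq u_0$. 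Hence $u\vDash\diamonddiamond\phi$. Note that the union-closure hypothesis is used precisely to obtain the upper endpoint $s_0\cup t_0\vDash\phi$ of the sandwich — exactly the ingredient whose absence makes $\diamonddiamond$ break convexity in the purely convex setting of Fact~\ref{c:fact:global_disjunction_convexity_break}.
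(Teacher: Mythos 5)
Your proof is correct and takes essentially the same approach as the paper: for the crucial convexity of $\diamonddiamond\phi$, your witness $u_0=s_0\cup(t_0\cap R[u])$ is in fact the same set as the paper's $u'=(t_0\cup s_0)\cap R[u]$ (since $s_0\subseteq R[s]\subseteq R[u]$), and you use the identical sandwich $s_0\subseteq u_0\subseteq s_0\cup t_0$ with union closure supplying the upper endpoint. The only differences are cosmetic: you verify $uRu_0$ pointwise rather than by set-algebraic manipulation of $R^{-1}$, and you spell out the $\boxbox$ case that the paper dismisses as straightforward.
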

\begin{proof}
    By induction on the structure of formulas $\phi$. Most cases are straightforward---note in particular that $\Diamond \phi$ and $\Box\phi$ are flat and therefore convex and union closed. We only show explicitly that (i) $\diamonddiamond \phi$ is union closed provided that $\phi$ is union closed; and (ii) $\diamonddiamond \phi$ is convex provided that $\phi$ is convex and union closed.
    
    For (i), let $T\neq \emptyset$ be such that for all $t\in T$, $M,t\vDash \diamonddiamond\phi$. Then for each $t\in T$, there is an $s_t\subseteq W$ such that $tRs_t$ and $M,s_t\vDash \phi$. By union closure, $M,\bigcup_{t\in T}s_t\vDash \phi$. By $s_t\subseteq R[t]$ for all $t\in T$, it follows that $\bigcup_{t\in T}s_t\subseteq \bigcup_{t\in T} R[t]=R[\bigcup T]$, and by $t\subseteq R^{-1}[s_t]$ for all $t\in T$, it follows that $\bigcup T=\bigcup_{t\in T} t\subseteq \bigcup_{t\in T}R^{-1}[s_t]=R^{-1}[\bigcup_{t\in T}s_t] $; therefore $\bigcup T R \bigcup_{t\in T}s_t$ whence $M,\bigcup T\vDash \diamonddiamond\phi$.

    For (ii), let $M,t\vDash \diamonddiamond\phi$ and $M,s\vDash \diamonddiamond\phi$ and $s\subseteq u \subseteq t$. Then there are $t',s'\subseteq W$ such that $tRt'$, $sRs'$, $M,t'\vDash \phi$, and $M,s'\vDash \phi$. By union closure, $M,t'\cup s'\vDash \phi$. Define $u':=(t'\cup s')\cap R[u]$. We will show (a) $uRu'$ and (b) $s'\subseteq u'$. Then by (b) we will have $s'\subseteq u'\subseteq t'\cup s'$, so by convexity, $M,u'\vDash \phi$; so that by (a) we will have $M,u\vDash \diamonddiamond\phi$.

    For (a), clearly $u'\subseteq R[u]$. To show that $u\subseteq R^{-1}[u']$, note first that $u\subseteq R^{-1}[R[u]]$ (as $a\subseteq  R^{-1}[R[a]]$ holds for any set $a$). On the other hand, since $u\subseteq tRt'$, we have $u\subseteq t\subseteq R^{-1}[t']\subseteq R^{-1}[t'\cup s']$; thus, $u\subseteq R^{-1}[t'\cup s']\cap R^{-1}[R[u]]=R^{-1}[u']$.

    For (b), since $u\supseteq sRs'$, we have $s'\subseteq R[s]\subseteq R[u]$, hence $s'\subseteq (t'\cup s')\cap R[u]=u'$.
\end{proof}

We can further show the modal analogue of Theorem \ref{c:theorem:NE_expressive_completeness} for each of these extensions. As in Section \ref{c:section:convex_modal}, we omit the details; the proofs are completely analogous to the propositional proof. 

\begin{theorem}\label{c:theorem:MLNE_expressive_completeness}
    Each of $\MLLVNE$ and $\MLGVNE$ is expressively complete for the class of all convex union-closed modal properties invariant under bounded bisimulation.
\end{theorem}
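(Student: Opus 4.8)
The plan is to transcribe the propositional proof of Theorem \ref{c:theorem:NE_expressive_completeness} into the modal setting, relying on the infrastructure recalled in Section \ref{c:section:convex_modal}: the modal analogues of team properties and expressive completeness, the world- and team-characteristic formulas $\chi^{\mathsf{X}}_v$ and $\chi^{\mathsf{X}}_t$ up to bounded bisimulation from \cite{hella2014,kontinen2014}, and the notion of bisimulation for modal team semantics under which both $\MLLVNE$ and $\MLGVNE$ are invariant. For each of the two logics $\LOGIC$, the inclusion $\left\Vert\LOGIC\right\Vert\subseteq\mathbb{CU}$ is already furnished by Proposition \ref{c:prop:MLNE_convex_union_closed}, and invariance under bounded bisimulation supplies the remaining component of the $\subseteq$-direction (since the target class consists of convex union-closed properties that are \emph{also} bisimulation-invariant); so the work lies entirely in the $\supseteq$-direction.

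For $\supseteq$, I would fix a finite $\mathsf{X}$, a modal-depth bound $n$, and a convex union-closed property $\PPP$ invariant under $n$-bisimulation. Since there are only finitely many $n$-bisimulation types of pointed models over $\mathsf{X}$, one may treat $\PPP$—and the worlds occurring in its teams—as finite, exactly via the representative-choosing trick used in Section \ref{c:section:convex_prop}, and then build a characteristic formula by either of the two propositional routes. Following the first route, I would form the modal analogue of the flat characteristic formula $\chi^{\mathbb{F}}_\PPP=\bigvee_{s\in\PPP}\chi^{\mathsf{X}}_s$ of Lemma \ref{c:lemma:flat_char_formulas} and of the upward characteristic formula $\chi^{\mathbb{U}}_\PPP$ of Lemma \ref{c:lemma:upward_char_formula} (using the defined operator $\filleddiamond\phi:=(\phi\land\NE)\vee\top$, which remains available), and conjoin them via the modal analogue of Lemma \ref{c:lemma:convex_uc_char_formula}. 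Following the second route, I would instead reproduce the $\NE$-based formulas of Lemma \ref{c:lemma:convex_union_closed_characteristic_formulas}, splitting on whether $\PPP$ has the empty-team property. In either route, the $\vee$-, $\NE$-, and $\filleddiamond$-based reasoning is verbatim the propositional argument once the propositional $\chi^{\mathsf{X}}_v$ are replaced by their modal counterparts.

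The one point that genuinely needs checking—rather than copying—is that the requisite classical, flat characteristic formulas are available in \emph{both} bases $\ML_{\vee,\Diamond}$ and $\ML_{\vee,\diamonddiamond}$. For $\MLLVNE$ this is the content of \cite{hella2014,kontinen2014}; for $\MLGVNE$ one must observe that on singletons the global modalities $\diamonddiamond,\boxbox$ and the flat modalities $\Diamond,\Box$ satisfy identical clauses (e.g.\ both diamonds assert the existence of a nonempty $s\subseteq R[w]$ with $s\vDash\phi$), so the standard single-world modal characteristic formulas, and hence the flat team-characteristic formulas $\chi^{\mathsf{X}}_t$, are equally expressible with $\diamonddiamond,\boxbox$ as with $\Diamond,\Box$. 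I expect this singleton-coincidence, together with confirming that the finitely-many-bisimulation-types bookkeeping goes through, to be the main—and only mildly delicate—obstacle. Notably, unlike the general convex modal case of Theorem \ref{c:theorem:convex_modal_expressive_completeness}, the union-closed construction never invokes the team-size formulas $\gamma^{\mathsf{X}}_n$, so the one awkward departure flagged there does not arise and the transcription is correspondingly cleaner.
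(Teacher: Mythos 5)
Your proposal is correct and matches the paper's own (deliberately terse) proof, which simply declares the argument ``completely analogous to the propositional proof'' of Theorem \ref{c:theorem:NE_expressive_completeness}, using Proposition \ref{c:prop:MLNE_convex_union_closed}, bisimulation invariance, and the modal characteristic formulas of \cite{hella2014,kontinen2014} exactly as you do. Your two additional observations---that $\diamonddiamond$ and $\Diamond$ coincide on singletons (hence on flat formulas), so the classical characteristic formulas are available in the $\ML_{\vee,\diamonddiamond}$ basis as well, and that the team-size formulas $\gamma^{\mathsf{X}}_n$ (the one delicate point in Theorem \ref{c:theorem:convex_modal_expressive_completeness}) are never needed here---are both accurate and are precisely the details the paper leaves implicit.
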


Aloni's Bilateral State-based Modal ($\BSML$) is essentially $\MLLVNE$ extended with a \emph{bilateral negation} which does not affect the expressive power of the logic (see \cite{aloni2023,anttila2024}). Therefore, the above also establishes that $\BSML$ is  expressively complete for the class of all convex union-closed modal properties invariant under bounded bisimulation; this solves a problem that was left open in \cite{aloni2023}.

\section{Uniform Definability and Uniform Extensions}
\label{c:section:uniform_definability}

In this section, we generalize the notion of uniform definability \cite{kontinen2009,ciardelli2009,galliani2013b,yang2014,yang20173,ciardelli2019,hella2024,barbero2024}  from the team semantics literature to articulate multiple senses of one team logic extending another. We then apply these definitions to characterize the relationships between, on the one hand, the convex logics: $\CONDEP$ and $\CONINQ$, and on the other, the downward-closed logics: dependence logic $\DEP$ and inquisitive logic $\INQ$.


Recall that $\DEP $ and $\INQ$ are expressively equivalent: both are complete for non-empty downward-closed properties: $\left\Vert\DEP\right\Vert=\left\Vert\INQ\right\Vert=\mathbb{DE}$.  
Hence, as the split disjunction $\vee$ preserves downward closure and the empty team property (in that for any $\phi$ and $\psi$ with these properties, $\phi\vee\psi$ also has these properties), for all $\phi,\psi$ of $\DEP$ or $\INQ$:
    $$\left\Vert \phi\vee \psi\right\Vert\in \left\Vert\INQ\right \Vert.$$
And more generally, each property expressed by a split disjunction of downward-closed formulas is expressible in $\INQ$. However, even though, for each particular split disjunction instance $\phi\vee \psi$ of $\INQ$-formulas, there is some $\INQ$-formula $\theta_{\phi\vee\psi}$ equivalent to $\phi\vee\psi$, it can be shown \cite{ciardelli2019} that $\vee$ is not \emph{uniformly definable} in $\INQ$: there is no \emph{context} $\theta_\vee[\cdot_1,\cdot_2]$ of $\INQ$ (where a context is a formula with designated atoms $\cdot_i$) such that for any $\phi,\psi$ of $\INQ$, $\phi\vee\psi\equiv \theta_\vee[\phi,\psi]$. Similarly, it has been shown \cite{yang20173} that neither $\vvee$ nor $\to$ is uniformly definable in $\DEP$ (while, again, each instance $\left\Vert\phi\vvee\psi\right \Vert$ and each $\left\Vert\phi\to\psi\right \Vert$ is expressible in $\DEP$).

This disconnect between expressibility and uniform definability is only possible due to the failure of closure under uniform substitution in these logics. Clearly, if each connective of one propositional team logic $\LOGIC_1$ is uniformly definable in another $\LOGIC_2$ and vice versa, each property expressible in one is also expressible in the other: $\left\Vert \LOGIC_1\right \Vert = \left\Vert \LOGIC_2\right \Vert$. And assuming closure under uniform substitution, $\left\Vert \LOGIC_1\right \Vert = \left\Vert \LOGIC_2\right \Vert$ implies the uniform definability of each connective of one logic in the other. For if $\LOGIC_1$ can express, say, $\left\Vert p \circ q \right \Vert$ where $\circ$ is a binary connective of $\LOGIC_2$, then there is a context $\theta_\circ[\cdot_1,\cdot_2]$ of $\LOGIC_1$ such that $ \theta_\circ[p,q]\equiv p\circ q$.\footnote{We are assuming here that $\LOGIC_1$ and $\LOGIC_2$ have the \emph{locality property}: for any formula $\phi$ of one of these logics, if $t\restriction \mathsf{P}(\phi)=s\restriction \mathsf{P}(\phi)$, then $t\vDash \phi\iff s\vDash \phi$. Some team logics, interestingly, lack this property---see, e.g., \cite{galliani2012}.} Then for any formulas $\phi,\psi$ of $\LOGIC_1$, by closure under uniform substitution, also $ \theta_\circ[\phi,\psi]\equiv \phi\circ \psi$.

Now, to connect this discussion with our concerns, let us first recall Fact \ref{c:fact:disjunctions_convexity_break}, and restate and reprove this fact in a more abstract manner. Given properties $\mathcal{P}$ and $\mathcal{Q}$, we write $\mathcal{P}\vee\mathcal{Q}:=\{t\cup s\mid t\in \PPP$ and $s\in \mathcal{Q}\}$ and $\mathcal{P}\vvee\mathcal{Q}:=\PPP\cup \QQQ$; and given classes of properties $\mathbb{P}$ and $\mathbb{Q}$, we write $\mathbb{P}\vee\mathbb{P}\subseteq \mathbb{Q}$ if $\PPP,\QQQ\in \mathbb{P}_{\mathsf{X}}$ implies $\PPP\vee\QQQ\in \mathbb{Q}_{\mathsf{X}}$, and similarly for $\mathbb{P}\vvee\mathbb{P}\subseteq \mathbb{Q}$.
\begin{fact} \label{c:fact:disjunctions_convexity_break2}
    (i) $\mathbb{C}\vee\mathbb{C}\not\subseteq \mathbb{C}$, and (ii) $\mathbb{C}\vvee\mathbb{C}\not\subseteq \mathbb{C}$. 
\end{fact}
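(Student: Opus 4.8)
The plan is to reduce the abstract statement back to the already-established Fact \ref{c:fact:disjunctions_convexity_break} by observing that the two operations on properties defined just above it coincide, on the nose, with the semantic clauses for $\vee$ and $\vvee$. Concretely, I would first record the identities $\left\Vert \phi\vee\psi\right\Vert_\mathsf{X}=\left\Vert\phi\right\Vert_\mathsf{X}\vee\left\Vert\psi\right\Vert_\mathsf{X}$ and $\left\Vert\phi\vvee\psi\right\Vert_\mathsf{X}=\left\Vert\phi\right\Vert_\mathsf{X}\vvee\left\Vert\psi\right\Vert_\mathsf{X}$, valid for any formulas $\phi,\psi$ over a common finite $\mathsf{X}$. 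Both are immediate from the truth clauses: a team $t\subseteq 2^\mathsf{X}$ supports $\phi\vee\psi$ exactly when it splits as $t=s\cup u$ with $s\vDash\phi$ and $u\vDash\psi$, and since any such $s,u$ are subteams of $t$ and hence themselves teams over $\mathsf{X}$, this is precisely membership in $\left\Vert\phi\right\Vert_\mathsf{X}\vee\left\Vert\psi\right\Vert_\mathsf{X}$; the $\vvee$-identity is even more direct, its clause being just the disjunction of support.

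Given these identities, (i) and (ii) follow by transporting the concrete witnesses of Fact \ref{c:fact:disjunctions_convexity_break} to the level of properties. For (i), I would take the convex formula $\phi$ exhibited there (over $\mathsf{X}=\{p,r\}$) and set $\PPP=\QQQ:=\left\Vert\phi\right\Vert_\mathsf{X}$; then $\PPP,\QQQ\in\mathbb{C}_\mathsf{X}$ by the convexity of $\phi$, while $\PPP\vee\QQQ=\left\Vert\phi\vee\phi\right\Vert_\mathsf{X}\notin\mathbb{C}_\mathsf{X}$ because $\phi\vee\phi$ was shown to be non-convex, witnessing $\mathbb{C}\vee\mathbb{C}\not\subseteq\mathbb{C}$. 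For (ii), I would take the convex formulas $p$ and $\filleddiamond q$ (over $\mathsf{X}=\{p,q\}$) and set $\PPP:=\left\Vert p\right\Vert_\mathsf{X}$ and $\QQQ:=\left\Vert\filleddiamond q\right\Vert_\mathsf{X}$; again $\PPP,\QQQ\in\mathbb{C}_\mathsf{X}$, whereas $\PPP\vvee\QQQ=\left\Vert p\vvee\filleddiamond q\right\Vert_\mathsf{X}\notin\mathbb{C}_\mathsf{X}$ since $p\vvee\filleddiamond q$ is not convex.

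There is no genuine obstacle here, as the statement is just a reformulation of Fact \ref{c:fact:disjunctions_convexity_break} at the level of properties rather than formulas. The one point deserving care is verifying that the property-operations $\vee$ and $\vvee$ really agree with the corresponding connectives exactly, and in particular that the witnessing subteams in the split clause live over the same $\mathsf{X}$, so that a single counterexample formula hands over a counterexample pair of properties. If a self-contained argument were preferred, one could instead read the explicit teams off the proof of Fact \ref{c:fact:disjunctions_convexity_break} and check directly that $\PPP\vee\PPP$ (respectively $\PPP\vvee\QQQ$) contains the two outer teams but omits the intermediate one; but invoking the correspondence identities is the cleaner route.
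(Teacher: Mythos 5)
Your proof is correct, but it takes a different route from the paper's. You derive the property-level fact from the formula-level Fact \ref{c:fact:disjunctions_convexity_break} via the correspondence identities $\left\Vert \phi\vee\psi\right\Vert_{\mathsf{X}}=\left\Vert\phi\right\Vert_{\mathsf{X}}\vee\left\Vert\psi\right\Vert_{\mathsf{X}}$ and $\left\Vert \phi\vvee\psi\right\Vert_{\mathsf{X}}=\left\Vert\phi\right\Vert_{\mathsf{X}}\vvee\left\Vert\psi\right\Vert_{\mathsf{X}}$, which you verify correctly (the point that the witnessing subteams in the split clause are subteams of $t$ and hence teams over the same $\mathsf{X}$ is exactly the care the argument needs). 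The paper instead reproves the fact from scratch, purely at the level of properties, with minimal hand-picked witnesses: for (i), $\mathcal{P}:=\{\{v_1\},\{v_2,v_3\}\}$ and $\mathcal{Q}:=\{\{v_1\}\}$, so that $\mathcal{P}\vee\mathcal{Q}=\{\{v_1\},\{v_1,v_2,v_3\}\}$ has a gap at, e.g., $\{v_1,v_2\}$; for (ii), $\mathcal{P}:=\{\{v_1,v_2,v_3\}\}$ and $\mathcal{Q}:=\{\{v_1\}\}$, giving the same non-convex union. The paper's proof buys brevity and full independence from any formula or language---in particular it does not rely on the $\NE$-involving formula from Fact \ref{c:fact:disjunctions_convexity_break}, whose connectives do not all belong to a single one of the convex logics---and it makes plain that the obstruction is purely combinatorial. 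Your proof buys an explicit bridge between the two facts, making precise the paper's remark that Fact \ref{c:fact:disjunctions_convexity_break2} is a ``restatement'' of Fact \ref{c:fact:disjunctions_convexity_break}: your identities show that formula-level counterexamples always transport to property-level ones (the converse transport being available only once expressive completeness, Theorem \ref{c:theorem:convex_expressive_completeness}, is in hand).
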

\begin{proof}
 (i) Let $\mathcal{P}\mathrel{:=}\{\{v_1\},\{v_2,v_3\}\}$ and $\mathcal{Q}\mathrel{:=}\{\{v_1\}\}$ (where the $v_i$ are valuations). Then $\mathcal{P}, \mathcal{Q}\in \mathbb{C}$, but $\mathcal{P}\lor \mathcal{Q}= \{\{v_1\},\{v_1, v_2, v_3\}\}\notin \mathbb{C}$.

    (ii) Let $\mathcal{P}\mathrel{:=}\{\{v_1,v_2,v_3\}\}$ and $\mathcal{Q}\mathrel{:=}\{\{v_1\}\}$. Then $\mathcal{P}, \mathcal{Q}\in \mathbb{C}$, but we have that $\mathcal{P}\vvee\mathcal{Q}= \{\{v_1\},\{v_1, v_2, v_3\}\}\notin \mathbb{C}$.
 \end{proof}
 
It follows that if a logic $\LOGIC$ is expressively complete for all convex properties (i.e., $\left \Vert\LOGIC\right\Vert= \mathbb{C}$), then there are formulas $\varphi, \psi$ in the language of $\LOGIC$ such that $\varphi\lor\psi$ is not expressible (i.e., $\left \Vert\varphi\lor\psi\right\Vert\notin\left \Vert\LOGIC\right\Vert$). In particular, as $\left\Vert\CONDEP\right\Vert=\mathbb{C}$ (cf. Theorem \ref{c:theorem:convex_expressive_completeness}), there are formulas $\phi,\psi$ of $\CONDEP$ such that $\left \Vert \phi\vee\psi\right \Vert \notin \left \Vert \CONDEP \right \Vert$; \emph{a fortiori}, the split disjunction is not uniformly definable in $\CONDEP$.

In contrast, we have that $\mathbb{D}\vee\mathbb{D}\subseteq \left\Vert\CONDEP \right \Vert$. In fact, given that for downward-closed $\phi$ and $\psi$, $\phi\vee\psi\equiv \phi\veedot \psi$, the split disjunction of any two downward-closed formulas \emph{is} definable in $\CONDEP$ by the context $\theta_\vee[\cdot_1,\cdot_2]:=\cdot_1\veedot \cdot_2$. Specifically, when restricted to $\left \Vert\DEP\right \Vert=\mathbb{DE}$, the split disjunction is definable in $\CONDEP$, hence each connective of $\DEP$ is definable in $\CONDEP$ when restricted to $\left \Vert\DEP\right \Vert$.

And so, while $\CONDEP$ cannot uniformly define the connectives of $\DEP$ \textit{within the setting of $\CONDEP$}, it can within that of $\DEP$. 

As considered in the literature, the concept of uniform definability in a logic $\LOGIC$ applies only to definability over $\left\Vert\LOGIC\right\Vert$. In order to make our observations precise, we must therefore generalize this notion; this is the aim of the rest of this section.
\\\\
First, we recall the notion of uniform definability from the literature. 



\begin{definition}[Uniform definability] \label{c:definition:uniform_definability}
        An $n$-ary logical connective $\circ$ is \emph{uniformly definable} in a logic $\LOGIC$ if there exists an $n$-ary context $\theta_\circ$ of $\LOGIC$ such that for all $\phi_1,\ldots, \phi_n$ in the language of $\LOGIC$, $\circ(\phi_1,\ldots,\phi_n)\equiv \theta_\circ[\phi_1,\ldots,\phi_n]$.
\end{definition}
To generalize this, we will assume compositionality of connectives and contexts: we say that an $n$-ary connective (or context) $\circ$ is \textit{compositional} if it is well-defined as a function $\circ:\mathbb{A}^n\to \mathbb{A}$, where $\mathbb{A}$ is the class of all properties. In particular, for compositional connectives, $\varphi_1^a\equiv\varphi_1^b, ..., \varphi_n^a\equiv\varphi_n^b$ imply $\circ(\varphi_1^a,..., \varphi_n^a)\equiv \circ(\varphi_1^b,..., \varphi_n^b)$. Further, for classes of properties $\mathbb{P}_1,\ldots,\mathbb{P}_n$, we write $\circ^{\mathbb{P}_1,\ldots,\mathbb{P}_n}$ to mean $\circ{\upharpoonright}(\mathbb{P}_1\times \cdots\times \mathbb{P}_n)$; and $\circ^{\mathbb{P}_1}$ to mean $\circ{\upharpoonright}(\mathbb{P}_1\times \cdots\times \mathbb{P}_1$).

With this assumption, a connective $\circ$ is uniformly defined by a context $\theta$ in a logic $\LOGIC$ if and only if $\circ^{\left\Vert \LOGIC\right \Vert}=\theta^{\left\Vert \LOGIC\right \Vert}$.

This suggests the following generalization:

\begin{definition}[Generalized uniform definability]
An $n$-ary logical connective (or context) $\circ$ is \emph{uniformly definable with respect to $\mathbb{P}_1,\ldots,\mathbb{P}_n$} in a logic $\LOGIC$ if there exists an $n$-ary context $\theta$ for $\LOGIC$ such that $\circ^{\mathbb{P}_1,\ldots,\mathbb{P}_n}=\theta^{\mathbb{P}_1,\ldots,\mathbb{P}_n}$.
 
It is \emph{uniformly definable with respect to $\mathbb{P}$} in $\LOGIC$ if it is uniformly definable  with respect to $\mathbb{P},\ldots, \mathbb{P}$ ($n$ times) in $\LOGIC$.

        It is \emph{locally uniformly definable} in $\LOGIC$ if it is uniformly definable in $\LOGIC$ with respect to $\left\Vert\LOGIC\right\Vert$, and it is \emph{globally uniformly definable} in $\LOGIC$ if it is uniformly definable in $\LOGIC$ with respect to $\mathbb{A}$, where $\mathbb{A}$ is the class of all properties.
\end{definition}

Note that uniform definability as in Definition \ref{c:definition:uniform_definability} is the same as local uniform definability.

We are now ready to define the notion of extension as pertaining to $\CONDEP$ w.r.t. $\DEP$, namely \emph{inner uniform extension}. We will also define/recall other interesting notions of extension to contrast with this notion. The notions are listed in order of strength---we have:

\bigskip
\begin{center}
    syntactic extension $\implies$ global uniform extension $\implies $ outer uniform extension $\implies$ inner uniform extension $\implies$ expressive extension
\end{center}
\bigskip

For $\LOGIC_1$ and $\LOGIC_2$ team logics, we say that:

\begin{itemize}
    \item[--] \textit{Syntactic extension:} $\LOGIC_1$ is a \emph{syntactic extension} of $\LOGIC_2$ if the set of logical connectives of $\LOGIC_2$ is a subset of that of $\LOGIC_1$. 
    
        Example: $\CONDEP$ is a syntactic extension of $\PLD$, but not of $\DEP$.
    \item[--] \textit{Global uniform extension:} $\LOGIC_1$ is a \emph{global uniform extension} of $\LOGIC_2$ if each logical connective of $\LOGIC_2$ is globally uniformly definable in $\LOGIC_1$.

        Example: Since $\NE\equiv \filleddiamond \top$, $\PLIM$ is a global uniform extension (but not a syntactic extension) of the extension $\PLI(\NE)$ of $\PLI$ with $\NE$.
    \item[--] \textit{Outer uniform extension:} $\LOGIC_1$ is an \emph{outer uniform extension} of $\LOGIC_2$ if each logical connective of $\LOGIC_2$ is locally uniformly definable in $\LOGIC_1$.

        Example: Consider the extension $\PLD(\con{\cdot},\Bot)$ of $\PLD$ with dependence atoms and $\Bot$, where $\Bot$ is now a primitive with its usual semantics. It is not difficult to check (given Theorem \ref{c:theorem:dep_inq_expressive_completeness}) that $\left\Vert \PLD(\con{\cdot},\Bot)\right \Vert=\mathbb{D}$, whence $\mathbb{D}=\left\Vert \PLD(\con{\cdot},\Bot)\right \Vert\supset \left\Vert \DEP\right \Vert=\mathbb{DE}$. As we have observed before, $\vee^{\mathbb{D}}=\veedot^{\mathbb{D}}$, whence $\vee$ is locally uniformly definable in $\PLD(\con{\cdot},\Bot)$. Thus, $\PLD(\con{\cdot},\Bot)$ is an outer uniform extension of $\DEP$. Note, however, that given Fact \ref{c:fact:disjunctions_convexity_break2}, there can be no context $\theta_\vee$ of $\PLD(\con{\cdot},\Bot)$ such that $\theta_\vee^{\mathbb{C}}=\vee^{\mathbb{C}}$; therefore there can also be no such context $\theta_\vee$ such that $\theta_\vee^{\mathbb{A}}=\vee^{\mathbb{A}}$. And so $\PLD(\con{\cdot},\Bot)$ is not a global uniform extension of $\DEP$.

    \item[--] \textit{Inner uniform extension:} $\LOGIC_1$ is an \emph{inner uniform extension} of $\LOGIC_2$ if each logical connective of $\LOGIC_2$ is uniformly definable in $\LOGIC_1$ with respect to $\left\Vert \LOGIC_2\right\Vert$. 

        Examples: Given that $\vee^{\mathbb{DE}}=\veedot^{\mathbb{DE}}$ and $\vvee^{\mathbb{DE}}=\vveedot^{\mathbb{DE}}$, $\vee$ is uniformly definable in $\CONDEP$ with respect to $\mathbb{DE}=\left\Vert \DEP\right \Vert$, and $\vvee$ is uniformly definable in $\CONINQ$ with respect to $\mathbb{DE}=\left\Vert \INQ\right \Vert$. It is then easy to see that $\CONDEP$ is an inner uniform extension of $\DEP$, and similarly with $\CONINQ$ and $\INQ$. Given Fact \ref{c:fact:disjunctions_convexity_break2}, however, $\CONDEP$ is not an outer uniform extension of $\DEP$, and $\CONINQ$ is not an outer uniform extension of $\INQ$.

    \item[--] \textit{Expressive extension:} $\LOGIC_1$ is an \emph{expressive extension} of $\LOGIC_2$ if $\left\Vert\LOGIC_2\right\Vert\subseteq \left\Vert\LOGIC_1\right\Vert$.

        Examples: $\DEP$ is an expressive extension of $\INQ$ (and vice versa), but not of $\CONDEP$. We mentioned above that $\vvee$ and $\to$ are not (locally) uniformly definable in $\DEP$. They are therefore not uniformly definable in $\DEP$ with respect to $\left\Vert \INQ\right\Vert=\left\Vert \DEP\right\Vert$; therefore, $\DEP$ is not an inner uniform extension of $\INQ$.
\end{itemize}
With this, we've precisely characterized what we mean by calling $\CONDEP$ a convex variant of $\DEP$, and $\CONINQ$ a convex variant of $\INQ$: the former are inner uniform extensions of the latter.\footnote{It is not clear to us whether $\PLIM$ is also an inner uniform extension of $\INQ$ (see Problem 3 below), and we have therefore refrained from calling it a convex variant of $\INQ$.}
\\\\
Let us conclude with one result and three problems for future research, whether by us or others.

As mentioned, it is shown in \cite{yang20173} that $\to$ is not uniformly definable in propositional dependence logic $\DEP$. Essentially the same proof shows that $\to$ is also not uniformly definable in $\CONDEP$, whence $\CONDEP$ is not an inner uniform extension of $\CONINQ$ or of $\PLIM$.

\textit{Problem(s) 1}: Is $\veedot$ uniformly definable in $\PLIM$ or in $\CONINQ$? (In other words, is $\CONDEP$ an inner uniform extension of $\PLIM$/$\CONINQ$?) It might be possible to adapt the proof from \cite{ciardelli2019} showing that $\vee$ is not uniformly definable in $\INQ$ to show that this is not the case.\footnote{Note that dependence atoms are uniformly definable in both $\CONINQ$ and in $\PLIM$. For $\CONINQ$, we have (following the definition of dependence atoms in $\INQ$, which follows essentially from \cite{abramsky}):
    $$\dep{p_1,\ldots,p_n}{p}\equiv ((p_1\vveedot \lnot p_1)\land \ldots (p_n\vveedot \lnot p_n))\to (p\vveedot \lnot p) .$$

    For $\PLIM$, we use the above in conjunction with Proposition \ref{c:prop:global_disjunction_definable}.}

\textit{Problem(s) 2}: Is $\vveedot$ uniformly definable in $\CONDEP$? It might be possible to adapt the proof from \cite{yang20173} showing that $\vvee$ is not uniformly definable in $\DEP$ to show that this is not the case.

\textit{Problem(s) 3}: Is $\vvee$ uniformly definable with respect to $\mathbb{DE}$ in $\PLIM$? (Is $\PLIM$ an inner uniform extension of $\INQ$?) It seems likely that if one solves this, one also solves whether $\vveedot$ is uniformly definable in $\PLIM$. (Is $\PLIM$ an inner uniform extension of $\CONINQ$?) Observe that in Proposition \ref{c:prop:global_disjunction_definable}, we only showed that $\vvee$ and $\vveedot$ are uniformly definable in $\PLIM$ with respect to $\mathbb{F}$.




\section{Conclusion}\label{c:section:conclusion}

In this paper, we introduced the logics $\CONDEP$, $\CONINQ$ and $\PLIM$ and proved them to be expressively complete for all convex propositional properties. We also introduced modal extensions and showed analogous modal expressive completeness results. 
We then examined some union-closed convex logics from the literature: we showed that the propositional logic $\PLNE$ is expressively complete for the class of all union-closed convex propositional properties, and we showed modal analogues of this result for the modal extensions $\ML_{\vee,\Diamond}(\NE)$, $\ML_{\vee,\diamonddiamond}(\NE)$ and $\BSML$ of $\PLNE$. 

We also showed that while the split disjunction $\vee$ and the global diamond $\diamonddiamond$ preserve convexity in a convex and union-closed setting, they do not preserve convexity in general. Similarly, while the global disjunction $\vvee$ preserves downward closure (and hence convexity) in a downward-closed setting, it does not preserve convexity in general. Finally, we generalized the notion of uniform definability from the literature in order to articulate the sense in which our convex variant $\CONDEP$ of dependence logic $\DEP$ and our convex variant $\CONINQ$ of inquisitive logic extend their downward-closed counterparts. The convex variants are inner uniform extensions of the downward-closed counterparts: each connective of the counterpart is uniformly definable in the convex variant with respect to the properties expressible in the counterpart.

We note two more problems for future research, whether by us or others, to add to the list in Section \ref{c:section:uniform_definability}:

\textit{Problem(s) 4}: Axiomatizations of the logics studied. There is a commonly-used strategy for finding axiomatizations of propositional and modal team logics (see, for instance, \cite{CiardelliRoelofsen2011,yangvaananen2016,yang2017,yang20172,yang2022}) that makes heavy use of the characteristic formulas for properties provided by expressive completeness results such as those established in this paper. Given our expressive completeness results, we expect it to be possible to use this strategy to construct axiomatizations of the logics we have studied. It would be particularly interesting to see axiomatizations of the convex propositional logics, to see how these axiomatizations differ from those of the downward-closed logics $\INQ$ and $\DEP$. (Note that $\PLNE$ has already been axiomatized in \cite{yang2017}, and $\ML_{\Diamond,\vee}(\NE)$ and $\BSML$ in \cite{aloni2023}. An extension of $\ML_{\diamonddiamond,\vee}(\NE)$ has been axiomatized in \cite{anttila2021}.)

\textit{Problem(s) 5}: First-order dependence logic (D) coincides in expressive power with the downward-monotone (or downward-closed) fragment of existential second-order logic (ESO) in the sense that D and this fragment define the same class of team properties \cite{kontinen2009}. Similarly, we have, for instance, that first-order \emph{independence logic} coincides with full ESO \cite{galliani2012}; D with the \emph{Boolean negation} coincides with full second-order logic (SO) \cite{nurmi}, as does first-order independence logic with $\to$ \cite{yang2013,yang2014}; D with $\to$ coincides with the downward-closed fragment of SO \cite{yang2013,yang2014}; and there are other team logics coinciding in this way with the union-closed fragment of ESO \cite{hoelzel2021}, as well as with both full first-order logic and its downward-closed fragment \cite{kontinen2023}. Is it possible to construct a convex variant of first-order dependence logic (or indeed any natural team logic, perhaps dissimilar to dependence logic) which is expressively equivalent with the convex fragment of ESO? Similarly, can we find a natural team logic equivalent to the convex union-closed fragment of ESO? How about the convex (and convex and union-closed) fragments of first-order and second-order logic?

\bibliography{bibb}%

 \begin{acks}
 Part of Anttila's research was conducted while he was affiliated with the Department of Mathematics and Statistics, University of Helsinki, Finland, where he received funding from grant 336283 of the Academy of Finland as well as the European Research Council (ERC) under the European Union's Horizon 2020 research and innovation programme (grant agreement No 101020762). Knudstorp's research was supported by Nothing is Logical (NihiL), an NWO OC project (406.21.CTW.023).
 
 A preliminary version of this paper appeared in Anttila's PhD dissertation \cite{anttila2025}; the authors thank Anttila's supervisors, Maria Aloni and Fan Yang, for extensive feedback. We are also grateful to Rodrigo Almeida, Fausto Barbero, Josef Doyle, Pietro Galliani, Lorenz Hornung, Juha Kontinen, and Marius Tritschler for discussions on the content of this paper, and for their helpful suggestions.

 \end{acks}

\end{document}